\begin{document}
\newtheorem{theo}{Theorem}[section]
\newtheorem{prop}[theo]{Proposition}
\newtheorem{lemme}[theo]{Lemma}
\newtheorem{cor}[theo]{Corollary}
\newtheorem{rem}[theo]{Remark}
\newtheorem{conj}[theo]{Conjecture}
\newtheorem{fact}[theo]{Fact}

\numberwithin{equation}{section}

\leftskip -1cm
\rightskip -1cm

\def\aa{{\mathbb A}}
\def\bc{\backslash}
\def\lg{L^2(Z(\aa)G(F)\bc G(\aa))}
\def\o{\omega}
\def\lgg{L^2(Z(\aa)G'(F)\bc G'(\aa);\o)}
\def\lra{\leftrightarrow}
\def\ccc{{\bf C}}
\def\jlr{{\bf JL}_r}
\def\ljr{{\bf LJ}_r}

\def\s{{\frak S}}
\numberwithin{equation}{section}
\leftskip -1cm
\rightskip -1cm
\def\a{\alpha}
\def\b{{\mathcal B}}
\def\d{{\mathcal D}}
\def\e{\varepsilon}
\def\f{{\mathcal F}}
\def\i{{\bf i}}
\def\k{\{1,2,...,k\}}
\def\l{{\bf l}}
\def\n{\mathbb N}
\def\r{{\mathbb R}}
\def\s{\sigma}
\def\z{\mathbb Z}

\def\cc{{\mathbb C}}
\def\ccc{{\bf C}}
\def\lra{\leftrightarrow}
\def\ski{{\sum_{i=1}^k}}
\def\ki{{_{i=1}^k}}
\def\rrr{{\mathcal R}}

\def\lj{{\bf LJ}}
\def\sgls{standard Levi subgroup}
\def\sglss{standard Levi subgroups}
\def\bc{\backslash}
\def\cusp{{\mathcal C}}
\def\tr{{\rm tr}}

\def\Ind{\mathrm{Ind}}

\title[Global Jacquet-Langlands]
{Global Jacquet-Langlands correspondence, multiplicity one and
classification of automorphic representations} \maketitle
\centerline{\footnotesize{by Alexandru Ioan
BADULESCU}{\footnote{Alexandru Ioan BADULESCU, Universit\'e de
Poitiers, UFR Sciences SP2MI, D\'epartement de Math\'ematiques,
T\'el\'eport 2, Boulevard Marie et Pierre Curie, BP 30179, 86962
FUTUROSCOPE CHASSENEUIL CEDEX\\
E-mail : badulesc\makeatletter @\makeatother
math.univ-poitiers.fr}}} \centerline{\footnotesize{with an Appendix
by Neven GRBAC}{\footnote{Neven GRBAC, University of Zagreb, Department of Mathematics,
Unska 3, 10000 Zagreb, Croatia\\
E-mail : neven.grbac\makeatletter @\makeatother zpm.fer.hr}}}
\ \\

{\bf Abstract:} In this paper we show a local Jacquet-Langlands
correspondence for all unitary irreducible representations. We prove
the global Jacquet-Langlands correspondence in characteristic zero.
As consequences we obtain the multiplicity one and strong
multiplicity one theorems for inner forms of $GL(n)$ as well as a
classification of the residual spectrum and automorphic
representations in analogy with results proved by
Moeglin-Waldspurger and Jacquet-Shalika for $GL(n)$.

\tableofcontents

\section{Introduction}

The aim of this paper is to prove the global Jacquet-Langlands
correspondence and its consequences for the theory of
representations of the inner forms of $GL_n$ over a global field of
characteristic zero. In order to define the global Jacquet-Langlands
correspondence, it is not sufficient to transfer only square
integrable representations as in the classical theory. It would be
necessary to transfer at least the local components of global
discrete series. Here we prove, more generally, the transfer of all
unitary representations. Then we prove the global Jacquet-Langlands
correspondence, which is compatible with this local transfer. As
consequences we obtain for inner forms of $GL_n$ the multiplicity
one theorem and strong multiplicity one theorem, as well as a
classification of the residual spectrum \`a la Moeglin-Waldspurger
and unicity of the cuspidal support \`a la Jacquet-Shalika. Using
these classifications we give counterexamples showing that the
global Jacquet-Langlands correspondence for discrete series do not
extend well to all unitary automorphic representations.

We give here a list of the most important results, starting with the
local study. We would like to point out that most of the local
results in this paper were obtained by Tadi\'c in [Ta6] under the
assumption that his conjecture $U_0$ holds. After we proved these
results here independently of his conjecture, S\'echerre announced
the proof of the conjecture $U_0$. His methods are different, based
on the theory of types and also some works of Barbash-Moy. The
proofs here are based on the Aubert (-Zelevinsky-Schneider-Stuhler)
involution and an irreducibility trick.

Let $F$ be a local non-Archimedean
field of characteristic zero and $D$ a central division algebra over $F$ of dimension $d^2$.
For $n\in\n^*$ set $G_n=GL_n(F)$ and $G'_n=GL_n(D)$. Let $\nu$ generically denote the character given by the
absolute value of the reduced norm on groups like $G_n$ or $G'_n$.

Let $\s'$ be a square integrable
 representation of $G'_n$. If $\s'$ is a cuspidal representation, then it corresponds
by local Jacquet-Langlands to a square integrable representation $\s$ of $G_{nd}$. We set $s(\s')=k$, where $k$ is
the length of the Zelevinsky segment of $\s$. If $\s'$ is not cuspidal, we set $s(\s')=s(\rho)$, where $\rho$ is any
cuspidal representation in the cuspidal support of $\s'$, and this does not depend on the choice. We set then $\nu_{\s'}=
\nu^{s(\s')}$. For any $k\in\n^*$ we denote then
$u'(\s',k)$ the Langlands quotient of the induced representation from
$\otimes_{i=0}^{k-1}(\nu_{\s'}^{\frac{k-1}{2}-i}\s')$,
and if $\a\in ]0,\frac{1}{2}[$, we denote
$\pi'(u'(\s',k),\a)$ the induced representation from $\nu_{\s'}^\a u'(\s',k)\otimes \nu_{\s'}^{-\a} u'(\s',k)$.
The representation $\pi'(u'(\s',k),\a)$  is irreducible ([Ta2]). Let $\mathcal U'$ be the set of all
representations of type $u'(\s',k)$ or
$\pi'(u'(\s',k),\a)$ for all $G'_n$, $n\in \n^*$. Tadi\'c conjectured in [Ta2] that

(i) all the representations in $\mathcal U'$ are unitary;

(ii) an induced representation
from a product of representations in $\mathcal U'$ is always irreducible and unitary;

(iii) every irreducible unitary representation of
$G'_m$, $m\in \n^*$,
is an induced representation from a product of representations in $\mathcal U'$.\\

The fact that the $u'(\s',k)$ are unitary has been proved in [BR1]
if the characteristic of the base field is zero. In the third
section of this paper we complete the proof of the point (i) (i.e.
$\pi'(u'(\s',k),\a)$ are unitary; see corollary  \ref{next}) and
prove (ii) (proposition  \ref{unit}).

We also prove a Jacquet-Langlands transfer for all irreducible
unitary representations of $G_{nd}$. More precisely, let us write $g'\lra g$ if $g\in G_{nd}$, $g'\in G'_n$ and
the characteristic polynomials of $g$ and $g'$ are equal and have distinct roots in an algebraic closure of $F$.
Denote $G_{nd,d}$ the set of elements $g\in G_{nd}$ such that there exists $g'\in G'_n$ with $g'\lra g$.
We denote $\chi_\pi$ the function character of an admissible representation $\pi$.
We say a representation $\pi$ of $G_{nd}$ is $d${\bf -compatible} if there exists
$g\in G_{nd,d}$ such that $\chi_\pi(g)\neq 0$.
We have (proposition  \ref{unit}):\\
\ \\
{\bf Theorem.} {\it If $u$ is a $d$-compatible irreducible unitary
representation of $G_n$, then there exists a unique irreducible
unitary representation $u'$ of $G'_n$ and a unique sign
$\varepsilon\in \{-1,1\}$ such that
$$\chi_u(g)=\varepsilon \chi_{u'}(g')$$
for all $g\in G_{nd,d}$ and $g'\lra g$.}\\
\ \\
It is Tadi\'c the first to point out ([Ta6]) that this should hold
if his conjecture $U_0$ were true. For more precise formulas for the
transfer which are essential for the global study see the subsection
\ref{newform}.
\\

The fifth section contains global results. Let us use the theorem above to define a map $|\lj|:u\mapsto u'$ from the
set of irreducible unitary $d$-compatible representations of $G_{nd}$ to the set of irreducible unitary representations
of $G'_n$.

Let now $F$ be a global field {\it of characteristic zero} and $D$ a
central division algebra over $F$ of dimension $d^2$. Let
$n\in\n^*$. Set $A=M_{n}(D)$. For each place $v$ of $F$ let $F_v$ be
the completion of $F$ at $v$ and set $A_v=A\otimes F_v$. For every
place $v$ of $F$, $A_v\simeq M_{r_v}(D_v)$ for some positive number
$r_v$ and some central division algebra $D_v$ of dimension $d_v^2$
over $F_v$ such that $r_v d_v=nd$. We will fix once and for all an
isomorphism and identify these two algebras. We say that $M_n(D)$ is
split at the place $v$ if $d_v=1$. The set $V$ of places where
$M_n(D)$ is not split is finite. We assume in the sequel that $V$
does not contain any infinite place.

Let $G_{nd}(\aa)$ be the group of ad\`eles of $GL_{nd}(F)$,
  and $G'_n(\aa)$
the group of ad\`eles of $GL_n(D)$. We identify
$G_{nd}(\aa)$ with $M_{nd}(\aa)^\times$ and $G'_{n}(\aa)$ with $A(\aa)^\times$.

Denote $DS_{nd}$ (resp. $DS'_n$) the set of discrete series of $G_{nd}(\aa)$ (resp. $G'_n(\aa)$).
If $\pi$ is a discrete series of $G_{nd}(\aa)$ or $G'_n(\aa)$, if $v$ is a place of $F$, we denote $\pi_v$ the
local component of $\pi$ at the place $v$. We will say that a discrete series $\pi$ of $G_{nd}(\aa)$ is
$D${\bf -compatible} if $\pi_v$ is $d_v$-compatible for all place $v\in V$.

If $v\in V$, the Jacquet-Langlands correspondence for
$d_v$-compatible unitary representations between $GL_{nd}(F_v)$ and
$GL_{r_v}(D_v)$ will be denoted $|\lj|_v$. Recall that if $v\notin
V$, we have identified the groups $GL_{r_v}(D_v)$ and
$GL_{nd}(F_v)$.
We have the following (theo. \ref{correspondence}):\\
\ \\
{\bf Theorem.} (a) {\it There exists a unique injective map ${\bf G}:DS'_n\to DS_{nd}$ such that, for all $\pi'\in DS'_n$,
we have ${\bf G}(\pi')_v=\pi'_v$ for every place
$v\notin V$. For every $v\in V$,
${\bf G}(\pi')_v$ is $d_v$-compatible and
we have $|\lj|_v({\bf G}(\pi')_v)=\pi'_v$. The image of $\bf G$ is the set of $D$-compatible elements of $DS_{nd}$.}

(b) {\it One has multiplicity one and strong multiplicity one theorems
for the discrete spectrum of $G'_n(\aa)$.}\\

Global correspondences with division algebras under some conditions
(on the division algebra or on the representation to be transferred)
have already been carried out (at least) in [JL], [Ro], [DKV] and
[Vi]. The general result here is heavily based on the comparison of
the trace formulas for $G'_n(\aa)$ and
$G_{nd}(\aa)$ carried out in [AC].\\

In the sequel of the fifth section we give a classification of
representations of $G'_n(\aa)$. We define the notion of basic
cuspidal representation for groups of type $G'_k(\aa)$ (see
proposition  \ref{cuspidal} and the sequel). These basic cuspidal
representations are all cuspidal. Neven Grbac will show in his
Appendix that these are actually the only cuspidal representations.
Then residual discrete series of $G'_n(\aa)$ are obtained from
cuspidal representations in the same way residual discrete series of
$GL_n(\aa)$ are obtained from cuspidal representations in [MW2].

Moreover, for any (irreducible) automorphic representation $\pi'$
of $G'_n$, we know that ([La]) there exists a
couple $(P',\rho')$ where $P'$ is a parabolic subgroup of $G'_n$ containing
the group of upper triangular matrices and
$\rho'$ is a cuspidal representation of the Levi factor $L'$ of $P'$
 twisted by a real non ramified character such that
$\pi'$ is a constituent (in the sense of [La]) of the induced
representation from $\rho'$ to $G'_n$ with respect to $P'$. We prove
(proposition \ref{classif} (c)) that this couple $(\rho',L')$ is
unique up to conjugation. This result is an analogue for $G'_n$ of
the theorem 4.4 of [JS].

The last section is devoted to the computation of $L$-functions,
$\epsilon'$-factors (in the meaning of [GJ]) and their behavior
under local transfer of irreducible (especially unitary)
representations. The behavior of the $\epsilon$-factors then
follows. These calculations are either well known or trivial, but we
feel it is natural to give them explicitly here. The $L$-functions
and $\epsilon'$-factors in question are preserved under the
correspondence for square integrable representations. In general,
$\epsilon'$-factors (but not $L$-functions) are preserved under the
correspondence for irreducible unitary representations.

In the Appendix Neven Grbac completes the classification of the
residual spectrum by showing that some representations are residual.

The essential part of this work has been done at the Institute for
Advanced Study during the year 2004 and I want to thank the
Institute for their warm hospitality and support. This research has
been supported by the NSF fellowship no. DMS-0111298. I want to
thank Robert Langlands and James Arthur for useful discussions about
global representations; Marko Tadi\'c and David Renard for useful
discussions on the local unitary dual; Abderrazak Bouaziz who
explained to me the intertwining operators. I want to thank Guy
Henniart and Colette Moeglin for the interest they showed for this
work and their invaluable advices. I thank Neven Grbac for his
Appendix where he carries out a last and important step of the
classification. Discussions with Neven Grbac have been held during
our stay at the Erwin Schr\"{o}dinger Institute in Vienna and I want
to thank here Joachim Schwermer for his invitation.

\section{Basic facts and notations (local)}

Let
$F$ be a non-Archimedean local field and
$D$ a central division algebra of finite dimension  over $F$. Then the dimension of $D$ over $F$ is
a square $d^2$, $d\in \n^*$.
If $n\in \n^*$, we set $G_n=GL_n(F)$ and $G'_n=GL_n(D)$.  From now on we identify a smooth representation
of finite length with
its equivalence class, so we will consider two equivalent representations as being equal. By {\bf character} of
$G_n$  we
mean a smooth representation of dimension one of $G_n$. In particular a character is not unitary unless we specify it.
Let $\s$ be an irreducible
smooth representation of
$G_n$. We say $\s$ is {\bf square integrable} if $\s$ is unitary and has a non-zero coefficient which is square integrable
modulo the center of $G_n$. We say $\s$ is {\bf essentially square integrable} if $\s$ is the twist of a square integrable
representation by a character of $G_n$. We say $\s$ is {\bf cuspidal} if $\s$ has a non-zero coefficient which has
compact support modulo the center of $G_n$. In particular a cuspidal representation is essentially square integrable.\\
\ \\
For all $n\in \n^*$ let us fix the following notations:

$Irr_n$ is the set of smooth irreducible representations of $G_n$,

$\d_n$ is the subset of essentially square integrable representations in $Irr_n$,

$\cusp_n$ is the subset of cuspidal representations in $\d_n$,

$Irr^u_n$ (resp. $\d^u_n$, $\cusp^u_n$) is the subset of unitary representations in $Irr_n$ (resp. $\d_n$, $\cusp_n$),

$\rrr_n$ is the Grothendieck group of admissible representations of finite length of $G_n$,

$\nu$ is the character of $G_n$ defined by the absolute value of the determinant (notation independent of $n$ --
this will lighten the notations and cause no ambiguity in the sequel).

For any $\s\in \d_n$, there is a unique couple $(e(\s),\s^u)$ such that $e(\s)\in \r$, $\s^u\in \d^u_n$ and
$\s=\nu^{e(\s)}\s^u$.

We will systematically  identify  $\pi\in Irr_n$ with its image in $\rrr_n$ and consider $Irr_n$ as a subset of
$\rrr_n$. Then $Irr_n$ is a $\z$-basis of the $\z$-module $\rrr_n$.

If $n\in \n^*$ and $(n_1,n_2,...,n_k)$ is an ordered set of positive integers such that $n=\sum_{i=1}^kn_i$
then the subgroup $L$ of $G_n$ made of diagonal matrices by blocs of sizes $n_1,n_2,...,n_k$ in this order from the
left up corner
to the right down corner is called a {\bf \sgls} of $G_n$. The group $L$ is canonically
isomorphic
with the product $\times_{i=1}^kG_{n_i}$, and we will identify these two groups. Then the notations
$Irr(L)$, $\d(L)$, $\cusp(L)$, $\d^u(L)$,
$\cusp^u(L)$, $\rrr(L)$ extend in an obvious way to $L$.
In particular $Irr(L)$ is canonically isomorphic to $\times_{i=1}^k Irr_{n_i}$ and so on.

We denote $ind_L^{G_n}$ the normalized parabolic induction functor
where it is understood that we induce with respect to the parabolic
subgroup of $G_n$ containing $L$ and the subgroup of upper
triangular matrices. Then $ind_L^{G_n}$ extends to a group morphism
$\i_L^{G_n}:{\mathcal R}(L)\to {\mathcal R}_n$. If $\pi_i\in
\rrr_{n_i}$ for $i\in\{1,2,...,k\}$ and $n=\ski n_i$, we denote
$\pi_1\times\pi_2\times...\times \pi_k$ or abridged
$\prod_{i=1}^k\pi_i$ the representation
$$ind_{\times_{i=1}^kG_{k_i}}^{G_n}\otimes\ki \s_i$$
of $G_n$. Let $\pi$ be a smooth representation of finite
length of $G_n$. If distinction between quotient, subrepresentation and subquotient of $\pi$ is not relevant,
we consider $\pi$ as an element of $\rrr_n$ (identification with its class) with no extra explanation.

If $g\in G_n$ for some $n$, we say $g$ is {\bf regular semisimple} if the characteristic polynomial of
$g$ has distinct roots in an algebraic closure of $F$.
If $\pi\in \rrr_n$, then we let $\chi_\pi$ denote the function character of $\pi$, as a locally constant map, stable under
conjugation, defined on the set
of regular semisimple elements of $G_n$.

We adopt the same notations adding a sign $'$
for $G'_n$: $Irr'_n$, $\d'_n$, $\cusp'_n$, $Irr_n^{'u}$, $\d_n^{'u}$, $\cusp_n^{'u}$, $\rrr'_n$.

There is a standard way of defining the determinant and the characteristic polynomial for elements of $G'_n$, in spite
$D$ is non commutative
(see for example [Pi] section  14). If $g\in G'_n$, then the characteristic polynomial of $g$ has coefficients in $F$,
is monic and has degree $nd$.
The definition of a regular semisimple
element of $G'_n$ is then the same as for $G_n$. If $\pi\in \rrr'_n$, we let again
$\chi_\pi$ be the function character of $\pi$.
As for $G_n$, we will denote $\nu$ the character of $G'_n$ given by the absolute value of the determinant
(there will be no confusion with the one on $G_n$).

\subsection{Classification of $Irr_n$ (resp. $Irr_n'$) in terms of $\d_l$ (resp. $\d'_l$), $l\leq n$}

Let $\pi\in Irr_n$. There exist a \sgls\ $L=\times\ki G_{n_i}$ of $G_n$ and, for all $1\leq i\leq k$,
$\rho_i\in\cusp_{n_i}$,
such that $\pi$ is a subquotient of $\prod\ki\rho_i$. The non-ordered multiset of cuspidal representations
$\{\rho_1,\rho_2,...\rho_n\}$
is determined by $\pi$ and is called {\bf the cuspidal support of $\pi$}.

We recall the Langlands classification which takes a particularly nice form on $G_n$.
Let $L=\times\ki G_{n_i}$ be a \sgls\ of $G_n$ and $\s\in \d(L)=\times\ki \d_{n_i}$.
Let us write $\s=\otimes_{i=1}^k\s_i$ with
$\s_i\in \d_{n_i}$. For each $i$, write $\s_i=\nu^{e_i}\s_i^u$, where $e_i\in\r$ and $\s_i^u\in \d^u_{n_i}$.
Let $p$ be a permutation of the set $\{1,2,...,k\}$ such that the sequence $e_{p(i)}$ is decreasing.
Let $L_p=\times_{i=1}^k G_{n_{p(i)}}$ and $\s_p=\otimes_{i=1}^k\s_{p(i)}$. Then $ind_{L_p}^{G_n}\s_p$ has a
unique irreducible quotient $\pi$ and $\pi$ is independent of the choice of $p$ under the condition that
$(e_{p(i)})_{1\leq i\leq k}$ is decreasing. So $\pi$ is defined by the non ordered
multiset $\{\s_1,\s_2,...,\s_k\}$.
We write then $\pi=Lg(\s)$. Every $\pi\in Irr_n$ is obtained like this. If $\pi\in Irr_n$
and $L=\times\ki G_{n_i}$ and
$L'=\times_{j=1}^{k'}G_{n'_j}$ are
two \sglss\ of $G_n$, if $\s=\otimes\ki\s_i$, with $\s_i\in \d_{n_i}$, and $\s'=\otimes_{j=1}^{k'}\s'_j$, with
$\s'_j\in \d_{n'_j}$,
are such that $\pi=Lg(\s)=Lg(\s')$, then $k=k'$ and
there exists a permutation $p$ of  $\{1,2,...,k\}$ such that $n'_j=n_{p(i)}$ and $\s'_j=\s_{p(i)}$.
So the non ordered multiset $\{\s_1,\s_2,...,\s_k\}$ is determined by $\pi$ and it is called {\bf the
essentially square integrable support of $\pi$} which we abridge as  {\bf the esi-support of $\pi$}.

An element $S=\i_L^{G_n}\s$ of $\rrr_n$, with $\s\in \d(L)$, is called a {\bf standard representation} of $G_n$.
We will often write $Lg(S)$ for $Lg(\s).$
The set $\b_n$ of standard representations of $G_n$ is a basis of $\rrr_n$ and the map $S\mapsto Lg(S)$ is a bijection
from $\b_n$ onto $Irr_n$. All these results are consequences of the Langlands classification (see [Ze] and [Rod]).
We also have the following result: if for all $\pi\in Irr_n$ we write $\pi=Lg(S)$ for some standard
representation $S$ and then for all $\pi'\in Irr_n\bc\{\pi\}$ we set
$\pi'< \pi$ if and only if $\pi'$ is a subquotient of $S$, then we obtain a well defined partial order relation
on $Irr_n$.

The same definitions and theory, including the order relation, hold for $G'_n$ (see [Ta2]).
The set of standard representations of $G'_n$ is denoted here by
$\b'_n$.

For $G_n$ or $G'_n$ we have the following proposition , where $\s_1$ and $\s_2$ are essentially square integrable
representations:

\begin{prop}\label{reduc}
The representation $Lg(\s_1)\times Lg(\s_2)$ contains $Lg(\s_1\times\s_2)$ as a subquotient with multiplicity 1.
If $\pi$ is another irreducible subquotient of $Lg(\s_1)\times Lg(\s_2)$, then $\pi<Lg(\s_1\times\s_2)$.
In particular, if $Lg(\s_1)\times Lg(\s_2)$ is reducible, it has at least two different subquotients.
\end{prop}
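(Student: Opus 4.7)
The plan is to reduce to the case where the induced representation is itself a standard representation, and then to read off the three claims from the general machinery of the Langlands classification recalled just above. The argument is identical for $G_n$ and for $G'_n$, since the formalism of standard modules, the bijection $S\mapsto Lg(S)$, and the partial order $<$ have been set up in parallel for both groups.

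Since each $\sigma_i$ is essentially square integrable, in particular irreducible, one has $Lg(\sigma_i)=\sigma_i$, so the object to study is just the parabolically induced representation $\sigma_1\times\sigma_2$. In the Grothendieck group $\rrr_n$ this element does not depend on the order of the inducing data: the two standard Levi subgroups $G_{n_1}\times G_{n_2}$ and $G_{n_2}\times G_{n_1}$ of $G_{n_1+n_2}$ are conjugate by a permutation matrix, and under such conjugation $\sigma_1\otimes\sigma_2$ is carried to $\sigma_2\otimes\sigma_1$, so comparing induced characters on regular semisimple elements gives $\sigma_1\times\sigma_2=\sigma_2\times\sigma_1$ in $\rrr_n$. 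After possibly interchanging the two factors I may therefore assume $e(\sigma_1)\ge e(\sigma_2)$. Under this ordering $S:=\sigma_1\times\sigma_2$ is itself an element of $\b_n$ attached to the multiset $\{\sigma_1,\sigma_2\}$, and by definition $Lg(S)=Lg(\sigma_1\times\sigma_2)$. The Langlands classification provides $Lg(S)$ as the unique irreducible quotient of $S$, so it is in particular an irreducible subquotient; by the very definition of the order relation recalled in the excerpt, every other irreducible subquotient $\pi$ of $S$ satisfies $\pi<Lg(S)$.

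It remains to establish multiplicity one of $Lg(S)$ in $S$ and then to derive the last assertion. Here I would argue as follows: the bijection $S'\mapsto Lg(S')$ identifies the two $\z$-bases $\b_n$ and $Irr_n$ of $\rrr_n$, and with respect to any linear extension of $<$ the change-of-basis matrix is lower triangular with non-negative integer entries. Since its inverse must again be a $\z$-matrix (both are $\z$-bases of the same $\z$-module), each diagonal entry is $\pm 1$, and being a genuine multiplicity it equals $1$. Hence $Lg(S)$ appears in $S$ with multiplicity exactly one. If now $S$ is reducible, its length in $\rrr_n$ is at least two, and removing the single copy of $Lg(S)$ leaves a non-empty sum of irreducibles each distinct from $Lg(S)$, giving the required second distinct irreducible subquotient. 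The only step that requires any real thought is the multiplicity-one statement, and it is an entirely formal consequence of unitriangularity over $\z$; the rest is bookkeeping around the Langlands classification.
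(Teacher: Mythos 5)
Your reduction step is where the argument goes wrong. You interpret $\sigma_1,\sigma_2$ as single essentially square integrable representations of $G_{n_1},G_{n_2}$, so that $Lg(\sigma_i)=\sigma_i$ and the whole product collapses to a standard module. But in the paper's framework (recalled just above the proposition), $\sigma$ in $Lg(\sigma)$ is an element of $\d(L)$ for a standard Levi subgroup $L$, i.e.\ a tensor product of essentially square integrable representations, and $Lg(\sigma)$ is the associated Langlands quotient, which in general is \emph{not} a standard module. That this is the intended reading is confirmed by every subsequent application: Proposition~\ref{reduc} is invoked with $Lg(\sigma_1)=Lg(\sigma_2)=\bar{u}(\sigma',k')$ in the proof of part (b) of Theorem~\ref{main}, with arbitrary irreducible unitary representations $u'_i$ in the irreducibility trick (Proposition~\ref{trick}), and with the local components $\rho'_v$ in Proposition~\ref{classif}. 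None of these $Lg(\sigma_i)$ are essentially square integrable, so your simplification $Lg(\sigma_i)=\sigma_i$ discards exactly the generality the paper needs.

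Under the correct reading the claim is genuinely nontrivial: one must control the decomposition of a product of two arbitrary Langlands quotients, not of two essentially square integrable representations. The abstract unitriangularity argument you use (which is fine in itself) only gives multiplicity one of $Lg(S)$ in the standard module $S$, not in the \emph{sub}object $Lg(\sigma_1)\times Lg(\sigma_2)\leq S_1\times S_2 = S$. What is missing is the positivity/combinatorial input from Zelevinsky's theory: writing $S_i=Lg(\sigma_i)+\sum_{\pi<Lg(\sigma_i)}a_\pi\pi$ with $a_\pi\geq 0$ in $\rrr_{n_i}$, one has to show that the cross terms $\pi\times\pi'$, $\pi\times Lg(\sigma_2)$, $Lg(\sigma_1)\times\pi'$ contribute only subquotients strictly below $Lg(\sigma_1\times\sigma_2)$, and that $Lg(\sigma_1\times\sigma_2)$ actually survives with multiplicity exactly one in the leading term. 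This is the content of the results the paper cites from [Ze] and [Ta2]; it does not follow from the formal unitriangularity of the change of basis alone. Your proof establishes only the degenerate special case $\sigma_i\in\d_{n_i}$.
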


See [Ze] and [Ta2] for the proof.

\subsection{Classification of $\d_n$ in terms of $\cusp_l$, $l|n$}\label{esi}

Let $k$ and $l$ be two positive integers and set $n=kl$.
Let $\rho\in \cusp_l$. Then the representation $\prod_{i=0}^{k-1}\nu^i\rho$ has a unique irreducible quotient $\s$.
$\s$ is an essentially square integrable representation
of $G_n$. We write then $\s=Z(\rho,k)$.
Every $\s\in\d_n$ is obtained like this and $l$, $k$ and $\rho$ are determined by $\s$.
This may be found in [Ze].

In general, a set $S=\{\rho, \nu\rho, \nu^2\rho,...,\nu^{a-1}\rho\}$, $\rho\in\cusp_b$, $a,b\in \n^*$,
is called a {\bf segment},
$a$ is {\bf the length} of the segment $S$ and $\nu^{a-1}\rho$ is the {\bf ending} of $S$.

\subsection{Local Jacquet-Langlands correspondence}

Let $n\in \n^*$. Let $g\in G_{nd}$ and $g'\in G'_n$. We say that $g$ {\bf corresponds}
to $g'$ if $g$ and $g'$ are
regular semisimple and have the same characteristic polynomial. We shortly write then $g\lra g'$.

\begin{theo}
There is a unique bijection $\ccc:\d_{nd}\to \d'_n$ such that for all $\pi\in\d_{nd}$ we have
$$\chi_\pi(g)=(-1)^{nd-n}\chi_{\ccc(\pi)}(g')$$
for all $g\in G_{nd}$ and $g'\in G'_n$ such that $g\lra g'$.
\end{theo}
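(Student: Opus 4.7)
The plan is to follow the classical approach of Deligne-Kazhdan-Vigneras [DKV], splitting the statement into uniqueness (from linear independence of characters) and existence (from orbital integral transfer plus the simple trace formula); one should bear in mind that $\ccc$ necessarily restricts to the $d$-compatible subset of $\d_{nd}$, since essentially square integrable representations of $G'_n$ have $d$-compatible lifts. For uniqueness, I would note that every regular semisimple $g' \in G'_n$ has characteristic polynomial of degree $nd$ with distinct roots in an algebraic closure of $F$, so the relation $g \lra g'$ always admits a solution $g \in G_{nd,d}$; the identity therefore pins down $\chi_{\ccc(\pi)}$ on the entire regular semisimple locus of $G'_n$. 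By the Harish-Chandra and Kazhdan theorem on linear independence of irreducible characters as locally constant functions on that locus, $\ccc(\pi)$ is uniquely determined once the identity is assumed, and the sign is forced by the nonvanishing of $\chi_\pi$ somewhere on $G_{nd,d}$.

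For existence I would use the geometric comparison of trace formulas on $G_{nd}$ and $G'_n$, whose basic local input is the orbital integral transfer: any $f \in C_c^\infty(G_{nd})$ supported on $G_{nd,d}$ admits a matching $f' \in C_c^\infty(G'_n)$ whose regular semisimple orbital integrals equal those of $f$ up to the sign $(-1)^{nd-n}$ on corresponding tori. Plugging Harish-Chandra pseudo-coefficients of cuspidal representations into the simple trace formula on each group and comparing spectral sides yields the character identity at the cuspidal level, and bijects the $d$-compatible elements of $\cusp_{nd}$ with $\cusp'_n$. Extension from cuspidals to all of $\d_{nd}$ is carried out via the Zelevinsky classification $\sigma = Z(\rho,k)$ recalled in subsection 2.2 together with its Tadi\'c-type analogue for $\d'_n$, matching segment data through the cuspidal correspondence and propagating the character identity by parabolic descent of orbital integrals.

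The main obstacle is the orbital integral transfer itself, which is the deep geometric content of [DKV]; once it is in hand, the spectral extraction is essentially formal. The sign $(-1)^{nd-n}$ arises as the difference between the split rank $nd$ of a maximal split torus in $G_{nd}$ and the rank $n$ of the corresponding split torus in $G'_n$, as dictated by Kazhdan's orbital integral matching convention.
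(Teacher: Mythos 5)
The paper does not supply a proof of this theorem; it is quoted directly from [DKV] (characteristic zero) and [Ba2] (positive characteristic), so there is no ``paper's own proof'' to compare against. Your sketch does capture the broad strategy of [DKV] --- uniqueness from linear independence of characters together with the observation that every regular semisimple $g'\in G'_n$ has a matching $g\in G_{nd,d}$, existence from orbital-integral transfer and a simple trace formula --- and the explanation of the sign as the difference of split ranks is correct.

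However, there is a genuine conceptual error in the existence step. You propose to first establish the correspondence on cuspidals (``bijects the $d$-compatible elements of $\cusp_{nd}$ with $\cusp'_n$'') and then extend to all of $\d_{nd}$ via the Zelevinsky classification and parabolic descent. This cannot work, because $\ccc$ does \emph{not} restrict to a bijection between cuspidal representations. Section 2.4 of the paper already makes this explicit: for a cuspidal $\rho'\in\cusp'_l$, the representation $\ccc^{-1}(\rho')$ is a generalized Steinberg $Z(\rho,p)$ of $G_{ld}$ which is cuspidal only when $p=1$, i.e.\ only when $s(\rho')=1$. The basic example is $n=1$: every irreducible representation of $G'_1=D^\times$ is cuspidal (the group is compact modulo center), yet $\ccc$ matches these bijectively with \emph{all} of $\d_d$, including the twisted Steinberg representations. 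So the set $\cusp'_n$ is not the image of any subset of $\cusp_{nd}$, and there is no cuspidal-level correspondence to ``propagate'' from. The actual arguments in [DKV] and [Ba2] treat square integrable representations all at once, exploiting orthogonality of their characters on the elliptic set rather than cuspidality, and the matching of segment lengths (the emergence of the invariant $s(\rho')$) is an \emph{output} of that argument, not an input one can feed in from a cuspidal correspondence.

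A secondary point: the hedge that ``$\ccc$ necessarily restricts to the $d$-compatible subset of $\d_{nd}$'' misreads the theorem. The domain is all of $\d_{nd}$, and it is part of what must be proved that every essentially square integrable representation of $G_{nd}$ is automatically $d$-compatible --- its character is nonzero on some elliptic regular element, and every elliptic regular element of $G_{nd}$ already lies in $G_{nd,d}$. This is not a caveat to bear in mind; it is a nontrivial ingredient of the argument.
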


For the proof, [DKV] if the characteristic of the base field $F$ is zero and [Ba2] for the non zero characteristic case.
I should quote here too the particular cases [JL], [Fl2] and [Ro] which contain some germs of the general
proof in [DKV].\\

We identify the centers of $G_{nd}$ and $G'_n$ via the canonical isomorphism. Then the
correspondence $\ccc$ preserves central characters so in particular $\s\in \d^u_{nd}$ if and only
if $\ccc(\s)\in \d^u_{n}$.

If $L'=\times\ki G'_{n_i}$ is a \sgls\ of $G'_n$ we say that the \sgls\ $L=\times\ki G_{dn_i}$ of $G_{nd}$
{\bf corresponds} to $L'$.
Then the Jacquet-Langlands correspondence extends in an obvious way to a bijective correspondence
$\d (L)$ to $\d'(L')$ with the same properties. We will denote this correspondence by the same letter $\ccc$.
A \sgls\ $L$ of
$G_n$ corresponds to a \sgls\ or $G'_r$ if and only if it is defined by a sequence $(n_1,n_2,...,n_k)$ such that
each $n_i$ is divisible by $d$. We then say $L$ {\bf transfers}.

\subsection{Classification of $\d'_n$ in terms of $\cusp'_l$, $l| n$. The invariant $s(\s')$}\label{esi2}

Let $l$ be a positive integer and $\rho'\in \cusp'_l$. Then $\s=\ccc^{-1}(\rho')$ is  an essentially
square integrable
representation of $G_{ld}$. We may write
$\s=Z(\rho,p)$ for some $p\in\n^*$ and some $\rho\in\cusp_{\frac{ld}{p}}$.
Set then $s(\rho')=p$ and $\nu_{\rho'}=\nu^{s(\rho')}$.

Let $k$ and $l$ be two positive integers and set $n=kl$.
Let $\rho'\in \cusp'_l$. Then the representation
$\prod_{i=0}^{k-1}\nu_{\rho'}^i\rho'$ has a unique irreducible quotient
$\s'$. $\s'$ is an essentially square integrable representation
of $G'_n$. We write then $\s'=T(\rho',k)$.
Every $\s'\in\d'_n$ is obtained like this and $l$, $k$ and $\rho'$ are determined by $\s'$. We set then
$s(\s')=s(\rho')$. For this classification see [Ta2].

A set $S'=\{\rho', \nu_{\rho'}\rho', \nu_{\rho'}^2\rho',...,\nu_{\rho'}^{a-1}\rho'\}$,
$\rho'\in\cusp'_b$, $a,b\in \n^*$,
is called a {\bf segment}, $a$ is {\bf the length} of $S'$ and $\nu_{\rho'}^{a-1}\rho'$ is the {\bf ending} of $S'$.

\subsection{Multisegments, order relation, the function $\l$ and rigid representations}\label{multisegments}

Here we will give the definitions and results in terms of groups $G_n$, but one may replace $G_n$ by
$G'_n$.
We have seen (section  \ref{esi} and \ref{esi2}) that
 to each $\s\in\d_n$ one may associate a segment.
A multiset of segments is called a {\bf multisegment}. If $M$ is a multisegment, the multiset of endings of its
elements (see section  \ref{esi} and \ref{esi2} for the definition) is denoted $E(M)$.

If $\pi\in G_n$,
the multiset of the segments of the elements of the
esi-support of $\pi$ is a multisegment; we will denote it by $M_\pi$. $M_\pi$ determines $\pi$.
The reunion with repetitions of the elements of $M_\pi$ is the cuspidal support of
$\pi$.

 Two segments $S_1$ and $S_2$
are said to be {\bf linked} if $S_1\cup S_2$ is a segment different from $S_1$ and $S_2$.
If $S_1$ and $S_2$ are linked, we say they are {\bf adjacent} if $S_1\cap S_2=\O$.

Let $M$ be a multisegment, and assume $S_1$ and
$S_2$ are two linked segments in $M$. Let $M'$ be the multisegment defined by

- $M'=(M\cup \{S_1\cup S_2\}\cup \{S_1\cap S_2\}) \bc\{S_1,S_2\}$ if $S_1$ and $S_2$ are not adjacent (i.e.
$S_1\cap S_2\neq \O$), and

- $M'=(M\cup \{S_1\cup S_2\})\bc\{S_1,S_2\}$ if $S_1$ and $S_2$ are adjacent (i.e. $S_1\cap S_2= \O$).

We say that we made an {\bf elementary operation} on $M$ to get $M'$,
or that $M'$ was
obtained from $M$ by an elementary operation. We then say $M'$ is inferior to $M$. It is easy to verify
this extends by transitivity to a well defined partial order relation $<$ on the set of multisegments of $G_n$.
The following proposition  is a result of [Ze] for $G_n$ and [Ta2] for $G'_n$.

\begin{prop}
If $\pi,\pi'\in Irr_n$, then $\pi<\pi'$ if and only if $M_\pi<M_{\pi'}$.
\end{prop}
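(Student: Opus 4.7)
The plan is to prove both implications by reducing to the basic case of two linked segments, and then leveraging the fact that the two orders (the subquotient order on $Irr_n$ and the elementary-operation order on multisegments) are generated by a common atomic move.

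For the implication $M_\pi < M_{\pi'} \Rightarrow \pi < \pi'$, by transitivity of $<$ on both sides I would reduce to the case where $M_\pi$ is obtained from $M_{\pi'}$ by a single elementary operation on two linked segments $S_1,S_2$. Let $\sigma_i$ be the essentially square integrable representation attached to $S_i$, and let $\tau_1,\dots,\tau_r$ be the essentially square integrable representations attached to the remaining segments of $M_{\pi'}$. The standard representation of $\pi'$ is (after Langlands reordering) $\sigma_1\times\sigma_2\times\tau_1\times\cdots\times\tau_r$, and the crux is the key lemma that for $S_1,S_2$ linked, $\sigma_1\times\sigma_2$ has length exactly two, with composition factors $Lg(\sigma_1\times\sigma_2)$ and $Lg(\sigma_{S_1\cup S_2}\times\sigma_{S_1\cap S_2})$ (the second factor being $\sigma_{S_1\cup S_2}$ alone in the adjacent case). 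This lemma is proved by computing Jacquet modules (or equivalently via Zelevinsky duality), and once it is in hand, Proposition \ref{reduc} applied in the larger product shows that $Lg(M_\pi)$ appears as a proper subquotient of the standard representation for $\pi'$.

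For the converse $\pi < \pi' \Rightarrow M_\pi < M_{\pi'}$, I would argue by induction on the total length of the segments in $M_{\pi'}$, or alternatively by a minimal counterexample. Suppose $\pi$ is a subquotient of the standard representation $S_{\pi'}$ distinct from $Lg(S_{\pi'})=\pi'$. Using the key lemma in the reverse direction: no subquotient of $S_{\pi'}$ can have a multisegment unrelated to $M_{\pi'}$ (they must share the cuspidal support, which determines the multiset of cuspidal representations appearing), and by the irreducibility criterion (products of $\sigma_i$'s with pairwise non-linked segments are irreducible), any nontrivial composition of $S_{\pi'}$ must involve at least one pair of linked segments. One then unwinds the subquotient by performing elementary operations on linked pairs to show $M_\pi\leq M_{\pi'}$. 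The statement that standard representations form an upper-triangular basis with respect to the order ($S_\pi=\pi+\sum_{\pi''<\pi}a_{\pi''}\pi''$ with $M_{\pi''}<M_\pi$) follows and gives both directions simultaneously.

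The main obstacle is the converse direction: showing that every irreducible subquotient of a standard representation corresponds to a multisegment which is strictly smaller in the elementary-operation order. The combinatorial verification that no subquotient can escape the order generated by elementary operations is delicate. In the $GL_n(F)$ case, Zelevinsky's derivative functors and the Langlands classification via Jacquet modules give a clean inductive framework; for $G'_n$ one transports the same argument using the Jacquet-Langlands correspondence $\mathbf C$ on essentially square integrable data and the parallel classification of $Irr'_n$ from [Ta2], noting that parabolic induction, Jacquet modules, and the linkedness combinatorics are all formally identical on the two sides because $s(\rho')$ precisely encodes the shift needed so that segments for $G'_n$ behave like $\nu$-segments for $G_n$.
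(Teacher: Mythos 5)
The paper does not prove this proposition at all: it is stated with an explicit citation to [Ze] (for $G_n$) and [Ta2] (for $G'_n$), so there is no in-paper proof to compare against. Your write-up is therefore an attempted reconstruction of Zelevinsky's Theorem 7.1, and the two directions fare very differently.

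Your forward direction ($M_\pi < M_{\pi'}\Rightarrow \pi<\pi'$) is essentially correct. After reducing by transitivity to a single elementary operation, the only input needed is that for two linked segments $S_1,S_2$ the product $\sigma_1\times\sigma_2$ has exactly two composition factors, $Lg(\{S_1,S_2\})$ and $Lg(\{S_1\cup S_2,S_1\cap S_2\})$; combined with Proposition \ref{reduc} applied inductively to the remaining factors $\tau_1,\dots,\tau_r$, this gives $\pi=Lg(M_\pi)$ as a subquotient of $S_{\pi'}$ in the Grothendieck group. This is a genuine, complete sketch.

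The converse is where the theorem lives, and you have not proved it. Observing that reducibility of $S_{\pi'}$ forces some pair of linked segments is true but far too weak: it does not control the multisegment of an arbitrary irreducible subquotient. The phrase ``one then unwinds the subquotient by performing elementary operations on linked pairs'' is precisely the statement to be proved, not an argument for it. Worse, your closing claim that ``the statement that standard representations form an upper-triangular basis \dots\ follows and gives both directions simultaneously'' is circular: that upper-triangularity with respect to the multisegment order is logically equivalent to the converse implication you are trying to establish. In [Ze] this is a substantial induction (Section 7) carried out via the derivative functors and the geometric lemma for Jacquet modules, and nothing in your outline substitutes for it. Finally, one small correction of perspective: for $G'_n$, [Ta2] does not obtain the order relation by transporting the $GL_n$ result through the Jacquet--Langlands correspondence; Tadi\'c redevelops the Zelevinsky machinery (Jacquet modules, geometric lemma, segment combinatorics in terms of $\nu_{\rho'}$) directly for $GL_n(D)$. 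Your remark that the combinatorics are ``formally identical'' is morally right, but the proof is intrinsic, not a transfer.
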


If $\pi<\pi'$, then the cuspidal support of $\pi$
equals the cuspidal support of $\pi'$.

Define a function $\l$ on the set of multisegments by: if $M$ is a multisegment, then $\l(M)$ is the maximum
of the lengths of the segments in $M$. If $\pi\in Irr_n$, set $\l(\pi)=\l(M_\pi)$.
The following lemma is obvious:

\begin{lemme}\label{length}
If $M'$ is obtained from $M$ by an elementary operation then $\l(M)\leq \l(M')$ and $E(M')\subset E(M)$.
As a function on $Irr_n$, $\l$ is decreasing.
\end{lemme}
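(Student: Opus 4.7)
The plan is to unwind the definition of elementary operation and check the three claims almost mechanically.

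First I would observe that an elementary operation on $M$ fixes all segments of $M$ except $S_1$ and $S_2$: these two are removed, and $S_1\cup S_2$ (always) plus $S_1\cap S_2$ (in the non-adjacent case only) are added. Since $S_1,S_2$ are linked, $S_1\cup S_2$ is a segment strictly larger than each of them, hence of length strictly greater than $\max(|S_1|,|S_2|)$; and $S_1\cap S_2$, when nonempty, is a subsegment of both, of length $\leq\min(|S_1|,|S_2|)$. So among the lengths appearing in $M'$ we find either the unchanged length of each old segment of $M$, or, for $S_1$ and $S_2$, the strictly larger length of $S_1\cup S_2$. In particular $\l(M)\leq\l(M')$.

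For $E(M')\subset E(M)$, I would argue on the cuspidal line containing $S_1,S_2$: write $S_1=\{\nu^a\rho,\dots,\nu^b\rho\}$ and $S_2=\{\nu^c\rho,\dots,\nu^d\rho\}$ with $a\leq c\leq b+1$ and $b\leq d$ (reordering if necessary). Then the ending of $S_1\cup S_2$ is $\nu^d\rho$, which is the ending of $S_2$; and in the non-adjacent case the ending of $S_1\cap S_2$ is $\nu^b\rho$, which is the ending of $S_1$. Thus in either case, the endings produced by the operation already belong to $E(M)$; in the adjacent case we merely lose the ending of $S_1$. So no new endings are introduced, and $E(M')\subset E(M)$.

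For the last assertion, suppose $\pi,\pi'\in Irr_n$ with $\pi<\pi'$. By the preceding proposition, $M_\pi<M_{\pi'}$, i.e.\ $M_\pi$ is obtained from $M_{\pi'}$ by a finite sequence of elementary operations. Iterating the first inequality through that sequence yields $\l(\pi')=\l(M_{\pi'})\leq\l(M_\pi)=\l(\pi)$, so $\l$ is order-reversing (``decreasing'') on $Irr_n$.

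There is really no obstacle here; the only point worth stating cleanly is the trivial combinatorial fact that for two linked segments on the same cuspidal line the endings of $S_1\cup S_2$ and $S_1\cap S_2$ are each equal to the ending of one of $S_1,S_2$, which is what drives both the length and ending assertions.
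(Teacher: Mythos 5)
The paper states this lemma without proof (``The following lemma is obvious''), so there is no author argument to compare against; your verification is the natural one and it is correct. The length claim follows since the only change is to replace $S_1,S_2$ by the strictly longer $S_1\cup S_2$ (and possibly the shorter $S_1\cap S_2$); the endings claim follows from your observation that, writing both segments on the line of $\rho$ with $a<c\le b+1$ and $b<d$, the ending of $S_1\cup S_2$ is that of $S_2$ and the ending of $S_1\cap S_2$ (when nonempty) is that of $S_1$; and the monotonicity of $\l$ on $Irr_n$ is the correct iteration through the chain of elementary operations given by the proposition $\pi<\pi'\iff M_\pi<M_{\pi'}$. The only cosmetic point is that the inequalities $a\le c$, $b\le d$ can in fact be sharpened to $a<c$, $b<d$ since linked segments cannot be nested, but this does not affect the argument.
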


The next important proposition  is also a result from [Ze] and [Ta2]:

\begin{prop}\label{irred}
Let $\pi\in Irr_k$ and $\pi'\in Irr_l$.
If for all $S\in M_\pi$ and $S'\in M_{\pi'}$ the segments $S$ and $S'$ are not linked, then
$\pi\times\pi'$ is irreducible.
\end{prop}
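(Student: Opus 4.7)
The plan is to treat first the case where $\pi,\pi'$ are both essentially square integrable, and then bootstrap to the general case via a commutation argument combined with a socle/cosocle analysis. Throughout, set $L:=Lg(M_\pi\cup M_{\pi'})$.

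\emph{Base case.} Let $\pi,\pi'$ be essentially square integrable with segments $S,S'$ that are not linked. By Proposition~\ref{reduc}, $\pi\times\pi'$ contains $Lg(\{S,S'\})=L$ with multiplicity one, and every other irreducible subquotient $\tau$ satisfies $M_\tau<\{S,S'\}$. An elementary operation on $\{S,S'\}$ requires $S$ and $S'$ to be linked, which is excluded; hence no such $\tau$ exists and $\pi\times\pi'=L$ is irreducible. Consequently, for any essentially square integrable $\sigma,\tau$ with segments not linked, both $\sigma\times\tau$ and $\tau\times\sigma$ are irreducible with multisegment $\{S_\sigma,S_\tau\}$, hence both equal $Lg(\{S_\sigma,S_\tau\})$ and are isomorphic. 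By induction in stages, in any product of essentially square integrable factors one may transpose two adjacent factors whose segments are not linked while preserving the isomorphism class of the ambient induced representation.

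\emph{General case.} Write $\pi=Lg(S)$ with $S=\sigma_1\times\cdots\times\sigma_p$ and $\pi'=Lg(S')$ with $S'=\sigma'_1\times\cdots\times\sigma'_q$ standard. By hypothesis, the segments of $\sigma_i$ and $\sigma'_j$ are not linked for all $i,j$. Iterating the transposition above, $S\times S'$ is isomorphic to the standard representation $\Sigma$ obtained by reordering all $\sigma_i$'s and $\sigma'_j$'s with $e(\cdot)$ globally decreasing; by the Langlands classification, $\Sigma$ has $L$ as its irreducible cosocle. The surjection $\Sigma\cong S\times S'\to\pi\times\pi'$ arising from $S\to\pi$ and $S'\to\pi'$ shows that $\pi\times\pi'$ is a non-zero quotient of $\Sigma$, so its cosocle is also $L$. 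Applying the same argument to $(\pi\times\pi')^\vee=\pi^\vee\times\pi'^\vee$---using that contragredient commutes with parabolic induction, dualizes multisegments segment-wise, and preserves the not-linked relation---shows the socle of $\pi\times\pi'$ is $L$. Iterated application of Proposition~\ref{reduc} gives that $L$ has Jordan-H\"older multiplicity one in $\pi\times\pi'$.

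\emph{Conclusion.} With socle, cosocle, and $L$-multiplicity all equal to one, the composition of the socle inclusion $L\to\pi\times\pi'$ with the cosocle projection $\pi\times\pi'\to L$ must be an isomorphism, for otherwise $L$ would appear both in the kernel of the projection (as part of the socle) and in its image, contradicting multiplicity one. Hence $L$ is a direct summand of $\pi\times\pi'$; the complement has zero socle and therefore vanishes, giving $\pi\times\pi'=L$ irreducible.

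\emph{Main obstacle.} The delicate step is the commutation: justifying that transposition of two adjacent essentially square integrable factors with segments not linked yields an isomorphism of the ambient induced representation, not merely an equality of composition series. This is reduced, via induction in stages, to the two-factor irreducibility of the base case.
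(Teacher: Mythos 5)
The paper itself does not prove this proposition; it cites it directly from [Ze] (for $G_n$) and [Ta2] (for $G'_n$) without further comment. Your argument is therefore not so much a ``different route from the paper'' as a route where the paper supplies none, so let me evaluate it on its own terms.

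The argument is correct and well organized. The base case is clean: for two essentially square integrable representations with unlinked segments, Proposition~\ref{reduc} together with Proposition~2.3 (the order comparison $\pi<\pi'\iff M_\pi<M_{\pi'}$) forces any hypothetical second constituent to have a multisegment strictly below the two-element multisegment $\{S,S'\}$, which is impossible when $S,S'$ are unlinked. The passage from there to a commutation of adjacent factors, via induction in stages, is the standard way to legitimize the ``merge-sort'' reordering $S\times S'\cong\Sigma$, and you correctly observe that only cross-transpositions between a $\sigma_i$ and a $\sigma'_j$ are ever needed, which is exactly what the hypothesis licenses. The cosocle identification is right, and the socle identification via contragredient is also right; one small slip is that $(\pi\times\pi')^\vee\cong\pi'^\vee\times\pi^\vee$ (the order reverses), but since the cosocle depends only on the resulting multisegment $M_{\pi^\vee}\cup M_{\pi'^\vee}$ this has no effect. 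The final socle--cosocle--multiplicity-one argument is tight: all three ingredients are needed, and you use all three. A stylistic remark: the multiplicity-one step does not really require ``iterated'' applications of Proposition~\ref{reduc}; it follows from a single application (reading $\sigma_1,\sigma_2$ as elements of $\mathcal D(L_1),\mathcal D(L_2)$, which is how the paper itself uses that proposition, e.g.\ in the proof of Proposition~\ref{trick}), or more directly from the fact that $\pi\times\pi'$ is a quotient of the standard module $\Sigma$, in which $L$ occurs exactly once.

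For comparison, the original arguments in [Ze] rely on the theory of Bernstein--Zelevinsky derivatives, which is a more powerful but also heavier tool; your socle/cosocle approach is softer, self-contained given the facts already recorded in Section~2, and has the advantage of applying uniformly to $G_n$ and $G'_n$ without any change, since every input you use (Langlands classification, Propositions~2.2, 2.3, \ref{reduc}, compatibility of contragredient with induction and with the partial order) is stated in the paper for both families of groups.
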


There is an interesting consequence of this last proposition . Let $l\in \n^*$ and $\rho\in \cusp_l$. We will call
the set $X=\{\nu^a\rho\}_{a\in\z}$ a {\bf line}, the line generated by $\rho$. Of course $X$ is also the line
generated by
$\nu\rho$ for example. If $\pi\in Irr_n$, we say $\pi$ is {\bf rigid} if the set of elements of the cuspidal support of
$\pi$ is included in one single line. As a consequence of the previous proposition  we have the

\begin{cor}\label{rigid}
Let $\pi\in Irr_n$. Let $X$ be the set of the elements of the cuspidal support of $\pi$.
If $\{D_1,D_2,...,D_m\}$ is the set
of all the lines with which $X$ has a non empty intersection, then one may write in a unique (up to permutation) way
$\pi=\pi_1\times\pi_2\times...\times\pi_m$ with $\pi_i$ rigid irreducible and the set of elements of the cuspidal
support of $\pi_i$ included in $D_i$, $1\leq i\leq m$.
\end{cor}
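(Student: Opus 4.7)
The plan is to build the decomposition directly from the multisegment $M_\pi$ and then invoke Proposition \ref{irred} together with the fact that an element of $Irr_n$ is determined by its esi-support.

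First I would observe that every segment $S$ occurring in $M_\pi$ is entirely contained in a single line: if $S=\{\rho,\nu\rho,\dots,\nu^{a-1}\rho\}$, then its elements all lie in the line generated by $\rho$. Consequently $M_\pi$ partitions in a canonical way as $M_\pi=M_1\sqcup\cdots\sqcup M_m$, where $M_i$ consists of those segments of $M_\pi$ lying in the line $D_i$. Each $M_i$ is itself a multisegment, so it corresponds via the Langlands classification to a unique irreducible representation $\pi_i$ whose esi-support is exactly $M_i$; by construction $\pi_i$ is rigid with cuspidal support inside $D_i$.

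Next I would show $\pi=\pi_1\times\pi_2\times\cdots\times\pi_m$. Since distinct lines are disjoint, any segment $S\in M_i$ and any segment $S'\in M_j$ with $i\neq j$ lie in disjoint lines, hence cannot be linked (linked segments share cuspidal components, forcing them into a common line). Applying Proposition \ref{irred} iteratively, the product $\pi_1\times\cdots\times\pi_m$ is irreducible. Its esi-support is the union of the esi-supports of the $\pi_i$, namely $M_1\cup\cdots\cup M_m=M_\pi$. Since an irreducible representation is determined by its esi-support, this product equals $\pi$.

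For uniqueness, suppose $\pi=\pi'_1\times\cdots\times\pi'_{m'}$ is another decomposition with each $\pi'_j$ rigid and with cuspidal support in some single line $D'_j$. The lines $D'_j$ must be exactly the lines meeting the cuspidal support of $\pi$, so up to reordering $m'=m$ and $D'_j=D_j$. Moreover, since the product is irreducible, its esi-support is the disjoint union of the esi-supports $M_{\pi'_j}$, and each $M_{\pi'_j}$ consists only of segments lying in $D_j$. Thus $M_{\pi'_j}=M_j$, which forces $\pi'_j=\pi_j$. The only mildly delicate step is the equality of esi-supports for the irreducible product, which follows directly from the Langlands classification together with the non-linkedness just established.
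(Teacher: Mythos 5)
Your proof is correct and follows exactly the route the paper intends: the paper itself offers no written proof of the corollary, stating only that it is a consequence of the preceding Proposition \ref{irred}, and your argument is precisely the standard way of filling in those details. Partitioning $M_\pi$ by lines, applying Proposition \ref{irred} (since segments in distinct lines cannot be linked), and then identifying the irreducible product with $\pi$ via its esi-support (which is indeed justified by the Langlands classification, or more concretely by Proposition \ref{reduc}) is exactly the intended reasoning, and your uniqueness argument is also sound.
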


We will say $\pi=\pi_1\times\pi_2\times...\times\pi_m$ is the {\bf standard decomposition} of $\pi$ in a
product of rigid representations (this is only {\it the shortest} decomposition of
$\pi$ as a product of rigid representations, but there might exist finer ones).

The same hold for $G'_n$.

\subsection{The involution}

Aubert defined in [Au] an involution (studied too by
Schneider and Stuhler in [ScS])
of the group of Grothendieck of smooth representations of finite length
of a reductive group over a local non-Archimedean field. The involution
sends an irreducible representation to an irreducible representation up to a sign.
We specialize this involution to $G_n$, resp. $G'_n$, and denote it $i_n$, resp. $i'_n$. We will
write $i$ and $i'$ when the index is not relevant or it is clearly understood.
With this notation we have the relation $i(\pi_1)\times i(\pi_2)=i (\pi_1\times\pi_2)$, i.e. the
``the involution commutes with the parabolic induction''. The same holds for $i'$. The reader may find  all this facts
 in [Au].

If $\pi\in Irr_n$, then one and only one among $i(\pi)$ and
$-i(\pi)$ is an irreducible representation. We denote it by
$|i(\pi)|$. We denote $|i|$ the involution of $Irr_n$ defined by
$\pi\mapsto |i(\pi)|$. The same facts and definitions hold for $i'$.

In [MW1] is proven the algorithm conjectured by Zelevinsky for to compute the esi-support of $|i(\pi)|$ from
the esi-support of $\pi$ when $\pi$ is rigid (and hence more generally for
$\pi\in Irr_n$, cf. corollary \ref{rigid}). The same facts and algorithm hold for $|i'|$ as explained in [BR2].

\subsection{The extended correspondence}\label{extended}

The correspondence $\ccc^{-1}$ may be extended in a natural way to a correspondence $\lj$ between Grothendieck groups.
Let $S'=\i_{L'}^{G'_n}\s'\in \b'_n$, where $L'$ is a \sgls\ of $G'_n$ and
$\s'$ an essentially square integrable representation of $L$. Set $M_n(S')=\i_{L}^{G_{nd}}\ccc^{-1}(\s')$, where $L$
is the \sgls\ of $G_{nd}$ corresponding to $L'$. Then $M_n(S')$ is a standard representation of $G_{nd}$ and
$M_n$ realizes an injective map from $\b'_n$ into $\b_{nd}$. Define $Q_n:Irr'_n\to Irr_{nd}$ by $Q_n(Lg(S'))=Lg(M_n(S'))$.
If $\pi'_1<\pi'_2$, then $Q_n(\pi'_1)<Q_n(\pi'_2)$. So $Q_n$ induces on $Irr(G'_n)$, by transfer from $G_{nd}$,
an order relation $<<$ which is stronger than $<$.

Let $\lj_{n}:\rrr_{nd}\to\rrr'_n$ be the $\z$-morphism defined on $\b_{nd}$ by setting
$\lj_n(M_n(S'))=S'$ and $\lj_n(S)=0$ if $S$ is
not in the image of $M_n$.

\begin{theo}\label{eu}
(a) For all $n\in \n^*$, $\lj_{n}$ is the unique map from $\rrr_{nd}$ to $\rrr'_n$
such that for all $\pi\in\rrr_{nd}$ we have
$$\chi_\pi(g)=(-1)^{nd-n}\chi_{\lj_n(\pi)}(g')$$
for all $g\lra g'$.

(b) The map $\lj_n$ is a surjective group morphism.

c) One has
$$\lj_n(Q_n(\pi'))=\pi'+\sum_{\pi'_j<<\pi'}b_j\pi'_j$$
where $b_j\in \z$ and $\pi'_j\in Irr(G'_n)$.

(d) One has
$$\lj_{n}\circ i_{nd}=(-1)^{nd-n}i'_n\circ\lj_n.$$
\end{theo}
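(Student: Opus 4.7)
The plan hinges on the character-identity characterization in~(a), from which (b), (c) and (d) follow by combining the definitions with Langlands--Zelevinsky triangularity and the character-theoretic properties of the Aubert involution.

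For the existence half of~(a) I verify the character identity on the basis $\b_{nd}$. Let $S=\i_L^{G_{nd}}\sigma\in\b_{nd}$ with $\sigma\in\d(L)$. If $L=\times_i G_{dn_i}$ transfers to $L'=\times_i G'_{n_i}$, then $\lj_n(S)=\i_{L'}^{G'_n}\ccc(\sigma)$ by construction; for $g\lra g'$, the induced character formula expresses $\chi_S(g)$ and $\chi_{\lj_n(S)}(g')$ as sums over regular semisimple conjugates of $g$ (resp.\ $g'$) into $L$ (resp.\ $L'$); these conjugates match block by block under $\lra$ since the partition of the characteristic polynomial into blocks corresponds, and the character identity for $\ccc$ applied to each $\sigma_i$ contributes a product of signs equal to $(-1)^{\sum_i (dn_i-n_i)}=(-1)^{nd-n}$. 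If $L$ does not transfer, some block size $m_i$ fails to be divisible by $d$; but every $g\in G_{nd,d}$ has irreducible characteristic-polynomial factors of degrees divisible by $d$ (its centralizer, a product of field extensions of $F$, must embed in $M_n(D)$, forcing each such field to split $D$), so no conjugate of $g$ can land in $L$ and the induced character formula gives $\chi_S(g)=0=\chi_{\lj_n(S)}(g')$. For uniqueness, every regular semisimple $g'\in G'_n$ is matched by some $g\in G_{nd}$ with the same characteristic polynomial, so the identity determines $\chi_{\lj_n(\pi)}$ on all regular semisimple elements of $G'_n$, and linear independence of characters determines $\lj_n(\pi)$.

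Part~(b) is immediate from the construction: $\lj_n(M_n(S'))=S'$ for every $S'\in\b'_n$, so the image of the $\z$-linear map $\lj_n$ contains the basis $\b'_n$ of $\rrr'_n$. For~(c), I apply the Langlands--Zelevinsky inversion in $\rrr_{nd}$ to expand $Q_n(\pi')=Lg(M_n(S'))=\sum_{R\in\b_{nd}} n_R\, R$ with $n_{M_n(S')}=1$ and $n_R\neq 0$ only if $Lg(R)\leq Lg(M_n(S'))$; applying $\lj_n$ kills every $R\notin M_n(\b'_n)$ and leaves $\lj_n(Q_n(\pi'))=\sum_{R'\in\b'_n} n_{M_n(R')}\, R'$ with $R'$ satisfying $Lg(R')\leq\pi'$ in the $<<$-order. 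Re-expanding each $R'$ into irreducibles via $R'=Lg(R')+\sum_{\tau'<Lg(R')}m^{R'}_{\tau'}\,\tau'$, the coefficient of $\pi'$ in the resulting sum equals $1$ (only $R'=S'$ produces it). Any $\sigma'=Lg(R')$ for a contributing $R'\neq S'$ satisfies $\sigma'<<\pi'$ automatically. For other $\sigma'$ appearing through the lower-order expansions of some $R'$, the delicate point is to show by induction on $<<$ and the triangularity of both bases that the net coefficients of those $\sigma'$ with $\sigma'\not<<\pi'$ cancel; this is the main obstacle.

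For~(d) I invoke uniqueness in~(a). Set $\Phi=(-1)^{nd-n}\,i'_n\circ\lj_n\circ i_{nd}:\rrr_{nd}\to\rrr'_n$, so that the identity to prove reads $\Phi=\lj_n$. To apply~(a) I must check $\chi_\pi(g)=(-1)^{nd-n}\chi_{\Phi(\pi)}(g')$ for $g\lra g'$. This reduces to the character-theoretic behaviour of the Aubert involution: since $i_G$ is the alternating sum of $\i_P^G\circ r_P^G$ over standard parabolics, for regular semisimple $g$ the value $\chi_{i(\pi)}(g)$ is a signed sum of values of $\chi_\pi$ on conjugates of $g$ into the Levis of the parabolics. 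For $g\in G_{nd,d}$ only the conjugates landing in transferring Levis contribute, and under $\lra$ they match bijectively with the analogous conjugates of $g'$ into the corresponding Levis of $G'_n$; the alternating signs on the two sides coincide Levi by Levi, and the global sign discrepancy is exactly the $(-1)^{nd-n}$ already present in~(a) applied to each Levi factor. Plugging this in yields the character identity for $\Phi$, whence $\Phi=\lj_n$ by uniqueness.
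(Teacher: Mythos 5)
The paper defers the proof of this theorem to [Ba4], so a line-by-line comparison with the paper's own argument is not possible; I assess your argument on its merits.

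Parts (a) and (b) are sound: the verification of the character identity on the standard basis, the vanishing of induced characters at $d$-compatible elements when the Levi does not transfer (because every block size of a conjugate landing in $L$ is a sum of degrees of irreducible factors of the characteristic polynomial, each divisible by $d$), the uniqueness from linear independence of characters, and the surjectivity from $\lj_n(M_n(S'))=S'$ are all correct. In part (c) you have in fact already closed the argument without noticing it: the alleged ``main obstacle'' of cancelling $\sigma'$ with $\sigma'\not<<\pi'$ never arises. Every $\tau'$ appearing in the triangular expansion of a contributing standard $R'$ satisfies $\tau'<Lg(R')$; the paper records that $<$ is contained in $<<$ (since $\pi'_1<\pi'_2$ forces $Q_n(\pi'_1)<Q_n(\pi'_2)$), and $<<$ is transitive because it is the pull-back under the injection $Q_n$ of the partial order $<$ on $Irr_{nd}$. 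Hence $\tau'<<Lg(R')\leq\pi'$ gives $\tau'<<\pi'$ for every irreducible other than $\pi'$ itself, and there is nothing to cancel.

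Part (d) has a genuine gap. Your key claim is that ``for regular semisimple $g$ the value $\chi_{i(\pi)}(g)$ is a signed sum of values of $\chi_\pi$ on conjugates of $g$ into the Levis.'' This conflates the character of the Jacquet module $r_P^G\pi$ with that of $\pi$. Writing $i=\sum_P(-1)^{a_P}\i_P^G\circ r_P^G$ and applying van Dijk's induced-character formula expresses $\chi_{i(\pi)}(g)$ in terms of $\chi_{r_P^G\pi}$ evaluated at conjugates of $g$ inside the Levi; but Casselman's character theorem identifies $\chi_{r_P^G\pi}(h)$ with $\chi_\pi(h)$ only for $h$ in the positive (contracting) cone of the Levi, not on all regular semisimple $h\in L$. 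Outside that cone the two differ, so the ``Levi by Levi'' matching you invoke is not available at the level you are using it. The signs $(-1)^{a_L}$ and $(-1)^{a_{L'}}$ for corresponding Levis also do not literally coincide under the natural conventions; they differ by a constant that must be reconciled with the $(-1)^{nd-n}$ coming from (a) — one sanity check is that for cuspidal $\pi$, both $i$ and $i'$ act by a pure sign, and those signs must differ by exactly $(-1)^{nd-n}$ for (d) to hold. To make your approach rigorous you would need either to show that $\lj$ intertwines $r_L$ with $r_{L'}$ on the compatible regular semisimple locus (using that the positive cones of $L$ and $L'$ are matched under $\lra$ and that characters of finite-length representations of $L$ are determined by their values there), or to bypass characters altogether: reduce, via compatibility of $i$, $i'$, $\lj$ with parabolic induction, to $\pi$ essentially square integrable and verify the identity in the standard basis using the explicit expansion of the Aubert/Zelevinsky dual of a segment representation.
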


See [Ba4]. We will often drop the index and write only $Q$, $M$ and
$\lj$. $\lj$ may be extended in an obvious way to \sglss. For a
\sgls\ $L'$ of $G'_n$ which correspond to a \sgls\ $L$ of $G_{nd}$
we have $\lj\circ\i_L^{G_{nd}}= \i_{L'}^{G'_n}\circ\lj$.

We will say that $\pi\in \rrr_{nd}$ is $d$-{\bf compatible} if
$\lj_n(\pi)\neq 0$. This means that $\chi_\pi$ is not zero on all
regular semisimple elements of $G_{nd}$ which correspond to an
element of $G'_{n}$. A regular semisimple element of $G_{nd}$
correspond to an element of $G'_{n}$ if and only if its
characteristic polynomial decomposes in irreducible factors all of
which the degrees are divisible by $d$. So our definition
 depends only on $d$, not on $D$. A product of representations is
 $d$-compatible if and only if each factor is
 $d$-compatible.

\subsection{Unitary representations of $G_n$}

We are going to use the word {\bf unitary} for {\bf unitarizable}.
Let $k$, $l$ be positive integers and set $kl=n$.

Let $\rho\in \cusp_l$ and
set $\s=Z(\rho,k)$. Then $\s$ is unitary if and only if $\nu^\frac{k-1}{2}\rho$ is unitary.
We set then
$\rho^u=\nu^\frac{k-1}{2}\rho\in \cusp^u_l$ and we write $\s=Z^u(\rho^u,k)$. From now on, anytime we write
$\s=Z^u(\rho,k)$, it is understood that $\s$ and $\rho$ are unitary.

Now, if  $\s\in \d^u_l$, we set
$$u(\sigma,k)=Lg(\prod_{i=0}^{k-1} \nu^{\frac{k-1}{2}-i}\s).$$
The representation $u(\s,k)$ is an irreducible representation of $G_n$.

If $\a\in ]0,\frac{1}{2}[$, we moreover set
$$\pi(u(\s,k),\a)=\nu^{\a}\s\times\nu^{-\a}\s.$$
The representation $\pi(u(\s,k),\a)$
is an irreducible representation of $G_{2n}$ (by proposition  1.4).

Let us recall the Tadi\'c
classification of unitary representations in
[Ta1].

Let $\mathcal U$ be the set of all the representations $u(\s,k)$ and $\pi(u(\s,k),\a)$ where $k,l$ range over $\n^*$,
$\s\in \cusp_l$ and $\a\in ]0,\frac{1}{2}[$. Then any product of elements of $\mathcal U$ is irreducible and unitary.
 Every irreducible unitary representation  $\pi$ of some $G_n$, $n\in \n^*$, is such a product.
The non ordered multiset of the factors of the product are determined by $\pi$.

The fact that a product of irreducible unitary representations is irreducible is due to Bernstein ([Be]).

Tadi\'c computed the decomposition of the representation $u(\s,k)$ on the basis $\b_n$ of
$\rrr_n$.

\begin{prop}\label{formula1}{\rm ([Ta4])}
Let $\s=Z(\rho,l)$ and $k\in\n^*$.
Let $W_k^l$ be the set of permutations $w$ of $\{1,2,...,k\}$ such that $w(i)+l\geq i$ for all $i\in\{1,2,...,k\}$.
Then we have:
$$
u(\s,k)=\nu^{-\frac{k+l}{2}}(\sum_{w\in W_k^l}(-1)^{sgn(w)}\prod_{i=1}^kZ(\nu^{i}\rho,w(i)+l-i)).$$
\end{prop}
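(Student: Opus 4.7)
The strategy is to recognise the signed sum over $W_k^l$ as the expansion of a $k\times k$ Jacobi--Trudi determinant. With the conventions $Z(\nu^i\rho,0)=\mathbf{1}$ and $Z(\nu^i\rho,m)=0$ for $m<0$, the permutations $w\in S_k$ whose contribution does not annihilate are exactly those in $W_k^l$, so the bracketed sum in the proposition is nothing but $\det\bigl(Z(\nu^i\rho,\,l+j-i)\bigr)_{1\le i,j\le k}$, and the whole assertion is equivalent to the determinantal identity
$$\nu^{(k+l)/2}\,u(\s,k)=\det\bigl(Z(\nu^{i}\rho,\,l+j-i)\bigr)_{1\le i,j\le k}$$
in $\rrr_{kl}$.

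I would prove this identity in two steps. First, the contribution of $w=\mathrm{id}$ is $\prod_{i=1}^{k}Z(\nu^i\rho,l)$, and after the $\nu^{-(k+l)/2}$ normalisation this is exactly the standard module whose Langlands quotient is $u(\s,k)$ by definition; reindexed by $i\mapsto k-i$, its $k$ segments become $\nu^{(k-1)/2-i}\s$ for $i=0,\ldots,k-1$. By proposition 2.1 this Langlands quotient appears with multiplicity one in that standard, and every other irreducible subquotient is strictly smaller in the order $<$. So it suffices to show that the contributions of the $w\neq\mathrm{id}$ terms, expanded on the standard basis $\b_{kl}$, cancel precisely the ``below-diagonal'' part of the $w=\mathrm{id}$ standard, leaving only $u(\s,k)$.

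For this cancellation I would use the Aubert involution $i_{kl}$ together with an induction on $k$. The involution commutes with parabolic induction (as recalled in the subsection on the involution), and by the Moeglin--Waldspurger algorithm, applicable to rigid representations, one has $|i_{kl}|(u(Z(\rho,l),k))=u(Z(\rho,k),l)$ up to a sign. Applied termwise on the right-hand side, $i_{kl}$ sends each essentially square integrable factor $Z(\nu^i\rho,l+j-i)$ to the Zelevinsky-dual esi representation on the same segment, so the right-hand determinant transforms (up to a global sign) into the analogous Jacobi--Trudi determinant with the roles of $k$ and $l$ interchanged. Hence the identity for $(k,l)$ is equivalent to a dual identity for $(l,k)$; an induction on $k+l$, with base case $k=1$ where it reduces to the tautology $u(\s,1)=\s$, closes the loop.

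The principal obstacle is the cancellation step: showing that the off-diagonal permutation contributions cancel precisely. What ultimately makes this work is the triviality of the Kazhdan--Lusztig polynomials for $GL_{kl}$ (a theorem of Zelevinsky), which forces the multiplicities of $u(\s,k)$ on $\b_{kl}$ to lie in $\{-1,0,1\}$; combined with the sign data from Moeglin--Waldspurger and the inductive reduction above, this pins down the signed combination as the Jacobi--Trudi determinant. A more self-contained alternative is to compute the Jacquet module of both sides along the maximal parabolic with Levi $G_l\times G_{(k-1)l}$ and identify the two expansions by induction on $k$ using proposition 2.2 and the geometric lemma.
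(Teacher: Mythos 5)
This proposition is not proved in the paper; it is stated with the citation ``([Ta4])'' and used as a black box. So the relevant comparison is against Tadi\'c's proof, and in any case against logical soundness. Your general picture (recognise the right-hand side as a Jacobi--Trudi determinant, check that $w=\mathrm{id}$ gives the defining standard module of $u(\s,k)$ with multiplicity one, and interpret the statement as a determinantal identity) is the right one. But the argument you propose to establish the identity has two genuine gaps.

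First, the Aubert-duality step does not do what you claim. You write that $i_{kl}$ ``sends each essentially square integrable factor $Z(\nu^i\rho,l+j-i)$ to the Zelevinsky-dual esi representation on the same segment,'' so that the determinant transforms into the analogous determinant with $k$ and $l$ interchanged. That is false: the Aubert/Zelevinsky dual of an essentially square integrable representation attached to a single segment is not essentially square integrable --- it is the ``trivial-type'' (elementary) representation on that segment, which in the Grothendieck group is itself an alternating sum of standard modules. So after applying $i$, the right-hand side is a determinant in the duals $|i(Z(\cdot,\cdot))|$, not in $Z(\cdot,\cdot)$. In the symmetric-function dictionary (which is really what you are invoking): $Z(\rho,m)\leftrightarrow h_m$, its Aubert dual $\leftrightarrow e_m$, and the identity you need is that $\det\bigl(h_{l-i+j}\bigr)_{k\times k}=\det\bigl(e_{k-i+j}\bigr)_{l\times l}$, i.e.\ the classical Jacobi--Trudi versus dual Jacobi--Trudi identity for the rectangle $(l^k)$. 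Nothing in your sketch establishes that $u(\s,k)$ corresponds to the Schur function of a rectangle --- which is precisely the content of the proposition --- so the dictionary is being used circularly.

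Second, and independently of the first point, the proposed induction cannot close. Your reduction via proposition~\ref{dual} exchanges $(k,l)\leftrightarrow(l,k)$, which leaves $k+l$ invariant. An induction on $k+l$ therefore never decreases; going from $(2,3)$ to $(3,2)$ and back is not progress. The base case $k=1$ is fine, but it is not reached by the proposed recursion for any $(k,l)$ with both entries greater than $1$.

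Finally, the appeal to ``triviality of the Kazhdan--Lusztig polynomials for $GL_{kl}$'' is not a usable shortcut: in type $A$ the relevant KL polynomials (for the graded Hecke algebra / multisegment poset) are \emph{not} identically $0$ or $1$, and the fact that the coefficients of $u(\s,k)$ on $\b_{kl}$ lie in $\{-1,0,1\}$ with precisely the combinatorics of $W_k^l$ is exactly what the determinantal formula asserts; you cannot assume it to prove it. Your alternative suggestion --- inducting on $k$ by computing Jacquet modules along the Levi $G_l\times G_{(k-1)l}$ and matching expansions using proposition~\ref{irred} and the geometric lemma --- is the kind of argument that can be made to work (and is closer in spirit to what actually appears in the literature), but as written it is only a one-sentence pointer, not a proof. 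To repair the argument you would need either to carry out that Jacquet-module induction honestly, or to establish $u(\s,k)\leftrightarrow s_{(l^k)}$ inside Zelevinsky's ring by some independent means before using the Jacobi--Trudi/dual Jacobi--Trudi duality.
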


One can also compute  the dual of $u(\s,k)$.

\begin{prop}\label{dual}
Let $\s=Z^u(\rho^u,l)$ and $k\in \n^*$.
If $\tau=Z^u(\rho^u,k)$, then
$$|i(u(\s,k))|=u(\tau,l).$$
\end{prop}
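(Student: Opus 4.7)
The plan is to apply the Mœglin--Waldspurger algorithm for the Zelevinsky involution on rigid representations (from [MW1], recalled in the involution subsection) to the multisegment of $u(\sigma,k)$ and to verify directly that the output is $M_{u(\tau,l)}$.

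First I would identify both multisegments explicitly. Writing $\sigma = Z(\nu^{-(l-1)/2}\rho^u, l)$, the esi-support of $u(\sigma,k)$ consists of the $k$ twists $\nu^{(k-1)/2-i}\sigma$, $0\le i\le k-1$, so
$$M_{u(\sigma,k)} = \{S_0, S_1, \ldots, S_{k-1}\},\qquad S_i = [\nu^{(k-1)/2-(l-1)/2-i}\rho^u,\;\nu^{(k-1)/2+(l-1)/2-i}\rho^u].$$
This is a ``rectangular ladder'': the $k$ length-$l$ segments $S_i$ tile the rows of a $k\times l$ rectangle of $\nu$-shifts of $\rho^u$. The analogous computation for $\tau = Z(\nu^{-(k-1)/2}\rho^u,k)$ identifies $M_{u(\tau,l)}$ with the $l$ length-$k$ columns $T_j$ ($0\le j\le l-1$) of the \emph{same} rectangle. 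So the statement $|i(u(\sigma,k))| = u(\tau,l)$ amounts to the combinatorial identity that the Zelevinsky dual of a rectangular ladder is its transpose.

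Next I would invoke the MW algorithm. Since the cuspidal support of $u(\sigma,k)$ lies in the single line $\nu^{\mathbb Z}\rho^u$, the representation is rigid and the algorithm applies (cf.\ corollary \ref{rigid} and the discussion preceding it). I would prove the transposition identity by induction on $\min(k,l)$: one cycle of the algorithm on $\{S_0,\ldots,S_{k-1}\}$ extracts the column $T_0$ (the segment containing the largest ending of the rectangle) and leaves behind the rectangular ladder obtained by deleting that column, which has width $l-1$. The induction hypothesis then yields the remaining columns $T_1,\ldots,T_{l-1}$, so the total output is $\{T_0,\ldots,T_{l-1}\} = M_{u(\tau,l)}$. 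Since a multisegment determines its irreducible representation, this gives $|i(u(\sigma,k))| = u(\tau,l)$.

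The hard part will be the combinatorial bookkeeping of the inductive step: one must verify that at each stage the MW algorithm genuinely extracts a full column of the current rectangle (rather than some more complicated combination of partial segments), and that the residual multisegment is itself a rectangular ladder of reduced width so that the induction runs. Everything else---the commutation of $i$ with parabolic induction, the parametrization of essentially square integrable representations by segments, and the fact that $M_\pi$ determines $\pi$---is already in hand from the propositions recalled earlier in the section.
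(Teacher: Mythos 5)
Your proof is sound: the multisegment of $u(\sigma,k)$ is indeed the family of $k$ parallel length-$l$ segments filling the $k\times l$ array $(i,j)\mapsto\nu^{(k-1)/2+(l-1)/2-i-j}\rho^u$, and because the $k$ endings are pairwise distinct, each pass of the M\oe glin--Waldspurger algorithm is forced: it peels off exactly one full column (a segment of length $k$, equal to a $\nu$-shift of the segment of $\tau$) and leaves a $k\times(l-1)$ array of the same shape, so induction on $l$ closes the argument. You should, however, be aware that the paper does not prove this proposition at all --- it simply cites Theorem~7.1(iii) of [Ta1] and remarks that the statement is ``also a consequence of [MW1].'' Your proposal carries out precisely that second route. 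Tadi\'c's original argument in [Ta1] is quite different (it goes through character identities and unitarity considerations rather than the dual-algorithm combinatorics), so what you gain with the MW route is a short, purely combinatorial and self-contained proof. Two small clarifications worth making in a write-up: (i) the cuspidals occurring are not literally a ``rectangle'' of distinct exponents --- the multiset of exponents is supported on an interval of length $k+l-1$ with a triangular multiplicity profile --- so it is cleaner to speak of the index rectangle $\{0,\dots,k-1\}\times\{0,\dots,l-1\}$ as you implicitly do; and (ii) after removing the first column the residual array is the multisegment of a central twist $\nu^{-1/2}u(Z^u(\rho^u,l-1),k)$, not of $u(Z^u(\rho^u,l-1),k)$ itself, which is harmless since the algorithm commutes with global twists, but the induction hypothesis should be phrased for arbitrary twists of such ladders (or just for the abstract array) rather than for the normalized $u(\cdot,\cdot)$.
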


This is the theorem 7.1 iii) [Ta1], and also a consequence of [MW1].

\subsection{Unitary representations of $G'_n$}

Let $k,l\in \n^*$ and set $n=kl$. Let $\rho\in\cusp'_l$ and  $\s'=T(\rho',k)\in \d'_n$. As for $G_n$, one has
$\s'\in\d'^u_n$  if and only if
$\nu_{\rho'}^\frac{k-1}{2}\rho'$ is unitary; we set then
$\rho'^{u}=\nu^\frac{k-1}{2}\rho'$ and write $\s'=T^u(\rho'^{u},k)$.

If now $\s'\in \d^{'u}_l$, we set

$$u'(\s',k)=
Lg(\prod_{i=0}^{k-1} \nu_{\s'}^{i-\frac{k-1}{2}}\s')$$

and

$$\bar{u}(\s',k)=
Lg(\prod_{i=0}^{k-1} \nu^{i-\frac{k-1}{2}}\s').$$
The representations $u'(\s',k)$ and $\bar{u}(\s',k)$ are irreducible representations of $G'_n$.

If moreover $\a\in ]0,\frac{1}{2}[$, we  set
$$\pi(u'(\s',k),\a)=\nu_{\s'}^{\a}\s'\times\nu_{\s'}^{-\a}\s'.$$
The representation $\pi(u'(\s',k),\a)$
is an irreducible representation of $G'_{2n}$ (cf. [Ta2]; consequence of the (restated) proposition  \ref{irred} here).

We have the formulas:

\begin{equation}\label{equa2}
\bar{u}(\s',ks(\s'))=(\prod_{i=1}^{s(\s')}\nu^{i-\frac{s(\s')+1}{2}}u'(\s',k));
\end{equation}

and, for all integer $1\leq b\leq s(\s')-1$,

\begin{equation}\label{equa1}
\bar{u}(\s',ks(\s')+b)=(\prod_{i=1}^{b}\nu^{i-\frac{b+1}{2}}u'(\s',k+1))
\times
(\prod_{j=1}^{s(\s')-b}\nu^{j-\frac{s(\s')-b+1}{2}}u'(\s',k)),
\end{equation}
with the convention that we make abstraction of the second product if $k=0$.

The  products are irreducible because  the segments appearing in the
esi-support of two different factors are never linked and the proposition  \ref{irred}.
The fact that the product is indeed $\bar{u}(\s',ks(\s'))$ (and resp. $\bar{u}(\s',ks(\s')+b)$) is
then clear by proposition  \ref{reduc}. This kind of formulas has been used (at least) in [BR1] and [Ta6].

The representations $u'(\s',k)$ and $\bar{u}(\s',k)$ are known to be
unitary at least in zero characteristic ([Ba4] and [BR1]).

One has

\begin{prop}\label{br}
Let $\s'=Z^u(\rho'^u,l)$ and $k\in \n^*$.
If $\tau'=Z^u(\rho'^u,k)$, then

(a) $|i'(u'(\s,k))|=u'(\tau,l)$
and

(b) $|i'(\bar{u}(\s,ks(\s')))|=\bar{u}(\tau,ls(\s')).$
\end{prop}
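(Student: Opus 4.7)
My plan is to handle (a) by the Moeglin--Waldspurger algorithm for the involution on rigid representations, as transported to $G'_n$ in [BR2], and then to deduce (b) from (a) using the identity (\ref{equa2}) together with the compatibility of $i'$ with parabolic induction and with character twists.

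For (a), I first identify the multisegment $M_{u'(\s',k)}$. Writing $\s'=T^u(\rho'^u,l)$ and setting $s=s(\rho'^u)$, so that $\nu_{\s'}=\nu_{\rho'}=\nu^s$, each factor $\nu_{\s'}^{i-(k-1)/2}\s'$ in the definition of $u'(\s',k)$ is the essentially square integrable representation attached to the segment $\{\nu_{\rho'}^{i-(k-1)/2+j-(l-1)/2}\rho'^u\}_{j=0}^{l-1}$ on the cuspidal line $\{\nu_{\rho'}^a\rho'^u\}_{a\in\z}$. Consecutive segments (indices $i$ and $i+1$) are shifted by exactly one step along that $\nu_{\rho'}$-line, so $M_{u'(\s',k)}$ is a $k\times l$ rectangle sitting on a single cuspidal line. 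In particular $u'(\s',k)$ is rigid (corollary \ref{rigid}), so $|i'(u'(\s',k))|$ is computed by the algorithm of [MW1] in the $G'_n$-form given in [BR2]. That algorithm is a purely combinatorial operation on multisegments, and on a rectangular multisegment it produces the transposed rectangle; this is the same computation that underlies Proposition \ref{dual} in the $G_n$ case. The transposed $l\times k$ rectangle is exactly $M_{u'(\tau',l)}$ with $\tau'=T^u(\rho'^u,k)$, which proves (a).

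For (b), I start from identity (\ref{equa2}),
\[
\bar u(\s',ks(\s'))=\prod_{i=1}^{s(\s')}\nu^{i-(s(\s')+1)/2}u'(\s',k).
\]
The involution $i'$ commutes with parabolic induction and with character twists, so
\[
i'\bigl(\bar u(\s',ks(\s'))\bigr)=\prod_{i=1}^{s(\s')}\nu^{i-(s(\s')+1)/2}\,i'\bigl(u'(\s',k)\bigr).
\]
By (a), $i'(u'(\s',k))=\epsilon\,u'(\tau',l)$ for some sign $\epsilon\in\{-1,1\}$, and since $s(\tau')=s(\rho'^u)=s(\s')$, applying (\ref{equa2}) to $\tau'$ identifies the right-hand side with $\epsilon^{s(\s')}\bar u(\tau',l\,s(\s'))$. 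Taking absolute values yields (b).

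The main obstacle is part (a): the combinatorial check that the Moeglin--Waldspurger algorithm on a rectangular multisegment outputs its transpose. This calculation is the one that already underlies Proposition \ref{dual} in the $G_n$ setting, and the only content here is that it transfers verbatim to $G'_n$ via the $\nu_{\rho'}$-line once the algorithm itself is available for $G'_n$ thanks to [BR2].
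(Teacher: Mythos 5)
Your proof is correct and follows essentially the same route as the paper: part (a) rests on [BR2] (you additionally spell out the rectangle-transposition combinatorics that [BR2] transports from the $G_n$ case of Proposition \ref{dual}), and part (b) is derived exactly as in the paper from identity (\ref{equa2}), part (a), and the compatibility of $i'$ with parabolic induction (and, implicitly in both arguments, with unramified twists).
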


{\bf Proof.} The point (a) is a direct consequence of [BR2]. For the point (b), it is enough
to use the relation \ref{equa2}, the point (a) here and the fact that $i'$ commutes with parabolic induction.\qed

\subsection{Hermitian representations and an irreducibility trick}\label{hermrep}

If $\pi\in Irr'_n$, write $h(\pi)$ for the complex conjugated representation of the
 contragredient of ${\pi}$. A representation $\pi\in Irr'_n$ is called {\bf hermitian} if  $\pi=h(\pi)$
(we recall, to avoid confusion, that here we use ``$=$'' for the usual ``equivalent''). A unitary representation
is always hermitian.
If $A=\{\s_i\}_{1\leq i\leq k}$ is a multiset  of essentially square integrable representations of some $G'_{l_i}$,
we define the multiset $h(A)$ by  $h(A)=\{h(\s_i)\}_{1\leq i\leq k}$.
If $\pi\in Irr_n$ and $x\in\r$, then $h(\nu^x\pi)=\nu^{-x}h(\pi)$, so
if $\s'\in\d'_l$ and we write $\s'=\nu^{e}\s'^{u}$
with $e\in \r$ and $\s'^{u}\in\d'^{u}_l$, then $h(\s')=\nu^{-e}\s'^{u}\in \d'_l$.
An easy consequence of proposition  3.1.1 in [Ca] is the

\begin{prop}\label{casselman}
 If $\pi\in Irr'_n$, and $A$ is the esi-support of $\pi$, then $h(A)$ is the esi-support of $h(\pi)$.
In particular,
$\pi$ is hermitian if and only if the esi-support $A$ of $\pi$ verifies $h(A)=A$.
\end{prop}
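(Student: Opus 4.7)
The plan is to derive the esi-support of $h(\pi)$ directly from the Langlands classification for $G'_n$ (as recalled in Section 2.1 of the paper, following [Ta2]) together with the two functorial compatibilities invoked via [Ca, Prop. 3.1.1]: namely, that the contragredient functor $\tilde{\cdot}$ and complex conjugation $\overline{\cdot}$ both commute with parabolic induction, and that each preserves essential square integrability. Write $\pi = Lg(\s)$ with $\s = \otimes_{i=1}^k \s_i$, $\s_i\in \d'_{n_i}$, and $A = \{\s_1,\ldots,\s_k\}$; decompose $\s_i = \nu^{e_i}\s_i^u$ with $\s_i^u \in \d'^u_{n_i}$ and $e_1\geq e_2\geq\cdots\geq e_k$.

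First I would establish that the esi-support of the contragredient $\tilde\pi$ equals $\{\tilde\s_1,\ldots,\tilde\s_k\}$. Since $\nu$ is a real-valued character and $\tilde{\s_i^u}\in \d'^u_{n_i}$, one has $\tilde\s_i = \nu^{-e_i}\tilde{\s_i^u}$, so $e(\tilde\s_i) = -e(\s_i)$. The content of [Ca, Prop. 3.1.1] is that the Jacquet module of $\tilde\pi$ with respect to a parabolic is the contragredient of the Jacquet module of $\pi$ with respect to the opposite parabolic (up to $\delta_P$); by standard arguments this translates the characterization of the Langlands quotient as the unique irreducible quotient of the standard module $\s_1\times\cdots\times\s_k$ into the characterization of $\tilde\pi$ as the unique irreducible subrepresentation of $\tilde\s_1\times\cdots\times\tilde\s_k$. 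Reversing the order of the factors (so that the new exponents $-e_k\geq\cdots\geq -e_1$ are decreasing) identifies $\tilde\pi = Lg(\tilde\s_k\times\cdots\times\tilde\s_1)$, and by the uniqueness part of the Langlands classification the esi-support is indeed $\{\tilde\s_1,\ldots,\tilde\s_k\}$.

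Next I would handle complex conjugation: $\overline{\cdot}$ commutes with parabolic induction, preserves $\d'$ and $\d'^u$, and sends $\nu^{e_i}\s_i^u$ to $\nu^{e_i}\overline{\s_i^u}$ (since $\nu$ is real). Thus the standard module $\s_1\times\cdots\times\s_k$ is sent to $\overline{\s_1}\times\cdots\times\overline{\s_k}$ with unchanged ordering of exponents, and $\overline\pi = Lg(\overline\s_1\times\cdots\times\overline\s_k)$ has esi-support $\{\overline\s_1,\ldots,\overline\s_k\}$. Composing with the preceding step, the esi-support of $h(\pi)=\overline{\tilde\pi}$ is $\{h(\s_1),\ldots,h(\s_k)\}=h(A)$, which is the first assertion. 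For the second assertion, $\pi=h(\pi)$ means the two irreducible representations share the same esi-support, which by uniqueness of esi-support is equivalent to $A=h(A)$.

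The main obstacle is the contragredient step: one must convert [Ca, Prop. 3.1.1], a statement about Jacquet modules relative to opposite parabolics, into a statement about Langlands data. Once the contragredient of a standard module is identified (up to permutation of Levi blocks) with the standard module built from the contragredient essentially square integrable factors, the uniqueness and monotonicity properties of the Langlands classification do the rest, and the conjugation step is essentially formal.
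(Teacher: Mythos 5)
Your proof is correct and supplies exactly the argument the paper leaves implicit: the paper simply labels the proposition "an easy consequence of proposition 3.1.1 in [Ca]" and gives no further details. The decomposition $h=\overline{\,\cdot\,}\circ\widetilde{\,\cdot\,}$, the compatibility of both functors with normalized parabolic induction (for normalized induction one even has $\widetilde{i_P^G\sigma}\cong i_P^G\widetilde{\sigma}$ with the \emph{same} parabolic, so the detour through opposite parabolics can be avoided), and the subrepresentation realization of the Langlands quotient are precisely the ingredients that turn Casselman's result into the stated proposition.
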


Let us give a lemma.

\begin{lemme}\label{hermitian}
Let $\pi_1\in Irr'_{n_1}$ and $\pi_2\in Irr'_{n_2}$ and assume $h(\pi_1)\neq \pi_2$. Then
there exists $\e>0$ such that for all $x\in]0,\e[$
the representation $a_x=\nu^x\pi_1\times \nu^{-x}\pi_2$ is irreducible, not hermitian.
\end{lemme}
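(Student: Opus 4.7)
The plan is to establish irreducibility and non-hermiticity separately by tracking the esi-supports as $x$ varies.

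For irreducibility, I invoke proposition \ref{irred}: $a_x$ fails to be irreducible only if some segment in the esi-support of $\nu^x \pi_1$ is linked with some segment in the esi-support of $\nu^{-x} \pi_2$. These esi-supports are the translates $\nu^x M_{\pi_1}$ and $\nu^{-x} M_{\pi_2}$ of the original multisegments. A necessary condition for two segments to be linked is that they live on a common line $\{\nu^a \rho\}_{a \in \z}$. For a cuspidal $\rho$ in the support of $\pi_1$ and $\tau$ in the support of $\pi_2$, the cuspidals $\nu^x \rho$ and $\nu^{-x} \tau$ share a line only when $2x$ belongs to a specific coset $c_{\rho,\tau} + \z$ (which is empty unless $\rho$ and $\tau$ already share a line). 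The union over the finitely many such pairs is a discrete subset $E_1 \subset \r$; any $\varepsilon$ smaller than the smallest positive element of $E_1$ forces $a_x$ irreducible throughout $]0, \varepsilon[$.

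For non-hermiticity, suppose in addition $a_x$ is hermitian. Since $a_x$ is irreducible, proposition \ref{casselman} translates $h(a_x) = a_x$ into the multiset equality
\[
\nu^x A_1 \cup \nu^{-x} A_2 = \nu^{-x} h(A_1) \cup \nu^x h(A_2),
\]
where $A_i$ is the esi-support of $\pi_i$. Writing $\sigma = \nu^e \sigma^u$ for each $\sigma \in A_1$ and $\omega = \nu^f \omega^u$ for each $\omega \in A_2$, every element on the left carries a real shift of the form $x + e$ or $-x + f$, while every element on the right has shift $-x - e'$ or $x - f'$. Pairing $\nu^{x+e}\sigma^u$ with an element of $\nu^{-x} h(A_1)$ would force $\sigma^u = (\sigma')^u$ and $2x = -(e+e')$, a discrete condition on $x$; symmetrically for matchings between $\nu^{-x}A_2$ and $\nu^x h(A_2)$. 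Enlarging $E_1$ by these finitely many additional bad values and shrinking $\varepsilon$ accordingly, for $x \in ]0,\varepsilon[$ the multiset equality must be crosswise: every $\nu^{x+e}\sigma^u$ pairs with some $\nu^{x-f'}\omega^u$, forcing $e = -f'$ and $\sigma^u = \omega^u$, i.e.\ the partner of $\sigma$ in $A_2$ is $h(\sigma)$; symmetrically every element of $\nu^{-x} A_2$ pairs with one of $\nu^{-x} h(A_1)$. These two bijections combine into $A_1 = h(A_2)$ as multisets, whence proposition \ref{casselman} gives $\pi_1 = h(\pi_2)$, contradicting the hypothesis $h(\pi_1) \neq \pi_2$.

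The main obstacle is the bookkeeping needed to exclude the ``diagonal'' matchings (between $\nu^x A_1$ and $\nu^{-x} h(A_1)$, or between $\nu^{-x}A_2$ and $\nu^x h(A_2)$) for small generic $x$; once that finite discrete set of bad values is absorbed into $E_1$, proposition \ref{irred} produces irreducibility and proposition \ref{casselman} converts the hermitian assumption into an equality of esi-supports that directly contradicts $h(\pi_1) \neq \pi_2$.
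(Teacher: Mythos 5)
Your proof follows essentially the same route as the paper's: both isolate a finite set of bad $x$ so that for small generic $x$ the segments of the two shifted esi-supports are unlinked (hence $a_x$ irreducible by proposition \ref{irred}) and no ``diagonal'' coincidences $\nu^x A_1\cap \nu^{-x}h(A_1)$ or $\nu^{-x}A_2\cap\nu^{x}h(A_2)$ occur; then proposition \ref{casselman} turns hermiticity into a multiset identity which, once the diagonal matchings are ruled out, forces $h(A_1)=A_2$ and thus $h(\pi_1)=\pi_2$, a contradiction. The only difference is cosmetic: you make the element-by-element pairing explicit, while the paper phrases the same cancellation as the multiset implication $A_x\cap h(A_x)=\emptyset$, $B_x\cap h(B_x)=\emptyset$, $A_x\cup B_x=h(A_x)\cup h(B_x)\Rightarrow h(A_x)=B_x$.
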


{\bf Proof.}  For all $x\in\r$ let $A_x$ be the esi-support of $\nu^x\pi_1$ and $B_x$ be
the esi-support of $\nu^{-x}\pi_2$.
Then the set $X$ of $x\in\r$ such that  $A_x\cap h(A_x)\neq\emptyset$
or $B_x\cap h(B_x)\neq\emptyset$ is finite
(it is enough to check the central character of the representations in these multisets).
The set $Y$ of $x\in\r$ such that  the cuspidal supports of
$A_x$ and $B_x$ has non empty intersection is finite too. Now, if $x\in \r\bc Y$,
$a_x$ is irreducible by the proposition  \ref{irred}. Assume moreover $x\notin X$.
As $a_x$ is irreducible, if it were hermitian one should have $h(A_x)\cup h(B_x)=A_x\cup B_x$
(where the reunions are to be taken
with multiplicities, as reunions of multisets) by the proposition  \ref{casselman}. But if $A_x\cap h(A_x)=\emptyset$
and $B_x\cap h(B_x)=\emptyset$, then this would lead to $h(A_x)=B_x$, and hence to $h(\pi_1)=\pi_2$
which contradicts the hypothesis.\qed

We now state our irreducibility trick.

\begin{prop}\label{trick}
Let $u'_i\in Irr'^u_{n_i}$, $i\in\{1,2,...,k\}$. If, for all $i\in\k$, $u'_i\times u'_i$ is irreducible, then
$\prod_{i=1}^ku'_i$ is
irreducible.
\end{prop}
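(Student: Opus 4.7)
I would prove the proposition by induction on $k$. The case $k=1$ is trivial. For $k\geq 2$, assume the claim for all products of fewer than $k$ factors satisfying the squared-irreducibility hypothesis, and set $v := u'_1 \times \cdots \times u'_{k-1}$. The inductive hypothesis applied to this strictly smaller family (whose factors all still satisfy $u'_i \times u'_i$ irreducible) yields $v \in Irr'$, and the task reduces to showing $v \times u'_k$ is irreducible. If $v \cong u'_k$ this is already the hypothesis for $i=k$, so assume $v \neq u'_k$. Since both $v$ and $u'_k$ are unitary, they are hermitian (proposition \ref{casselman}), so $h(v) = v \neq u'_k = h(u'_k)$, and lemma \ref{hermitian} supplies $\varepsilon > 0$ such that
$$\pi(x) := \nu^x v \times \nu^{-x} u'_k$$
is irreducible and non-hermitian for $x \in (0,\varepsilon)$.

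The crux is to upgrade this to irreducibility at $x=0$. The idea is to introduce the auxiliary family
$$\sigma(x) := \nu^x v \times \nu^{-x} u'_k \times u'_k,$$
and read it two ways. On one hand $\sigma(x) = \pi(x) \times u'_k$. On the other hand $\sigma(x) = \nu^x v \times (\nu^{-x} u'_k \times u'_k)$; here the inner factor is irreducible at $x=0$ by hypothesis, and since the length of a one-parameter family of parabolic inductions is generically minimal and can only jump on a closed set of special values, $\nu^{-x} u'_k \times u'_k$ remains irreducible on a neighborhood of $0$. A second application of lemma \ref{hermitian} to the pair $(v,\, \nu^{-x} u'_k \times u'_k)$ (after an elementary esi-support check for the non-hermitian-pair assumption, which holds after shrinking $\varepsilon$) then shows $\sigma(x)$ is irreducible for all sufficiently small $x > 0$. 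Letting $x \to 0^+$ and comparing the two readings forces $v \times u'_k$ to be irreducible: a non-trivial decomposition of $v \times u'_k$ would, by the factorization $\sigma(x) = \pi(x)\times u'_k$, give at least two distinct subquotients of $\sigma(0)$, contradicting what the other factorization gives in the limit.

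\textbf{Main obstacle.} The genuinely non-formal step is the passage to the limit $x\to 0$: generic irreducibility on a punctured interval does not by itself imply irreducibility at the accumulation point, so one needs the squared hypothesis to block any length-jump at $x=0$. That is precisely the role played by $u'_k \times u'_k$ irreducible in the argument above. Secondary technical points are (i) a preliminary sub-induction to handle the ``coincidence'' case $v = u'_k$ and more generally products of the form $(u'_i)^{\times m}$ (which the argument above assumed away), and (ii) a careful verification that the various applications of lemma \ref{hermitian} are legitimate, i.e.\ that the relevant esi-supports do not coincide under $h$, which amounts to a finite check of central characters as in the proof of that lemma.
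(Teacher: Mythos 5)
Your proposal has a genuine gap at the final step: the passage from irreducibility of $\sigma(x)$ on $(0,\varepsilon)$ to a conclusion about $\sigma(0)$. Semicontinuity of length works in exactly the wrong direction for this: the set of parameters where a twisted induced representation is reducible is a \emph{closed} set, so reducibility at the isolated point $x=0$ is perfectly compatible with irreducibility on an adjacent punctured interval. You flag this yourself as ``the genuinely non-formal step'' and assert that the squared hypothesis ``blocks any length-jump at $x=0$,'' but nothing in the argument supplies a mechanism. The irreducibility of $u'_k\times u'_k$ does guarantee, as you note, that $\nu^{-x}u'_k\times u'_k$ stays irreducible near $0$; what it does \emph{not} do, on its own, is prevent the length of the three-fold product $\sigma(x)$ from jumping at the single point $x=0$. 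If $v\times u'_k$ were reducible, then $\sigma(0)=(v\times u'_k)\times u'_k$ would be reducible while $\sigma(x)$ for $x\in(0,\varepsilon)$ stays irreducible, and your two readings produce no contradiction.

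The missing ingredient is \emph{unitarity}, which is precisely what the paper's proof uses. Because $u'_i\times u'_i$ is irreducible and unitary at $x=0$, and $\nu^xu'_i\times\nu^{-x}u'_i$ is irreducible for $x$ in a small interval (disjointness of cuspidal supports, proposition \ref{irred}), the latter is forced to remain \emph{unitary} on that interval --- this is a complementary-series continuity argument (cf.\ [Ta3]). Hence every irreducible subquotient of $\prod_{i=1}^k(\nu^xu'_i\times\nu^{-x}u'_i)$ is unitary, in particular hermitian. In the Grothendieck group this equals $(\nu^x\prod_iu'_i)\times(\nu^{-x}\prod_iu'_i)$, and if $\prod_iu'_i$ were reducible it would have two distinct unitary irreducible subrepresentations $\pi_1\neq\pi_2$ (proposition \ref{reduc}); lemma \ref{hermitian} then produces, for a suitable small $x$, a subquotient $\nu^x\pi_1\times\nu^{-x}\pi_2$ that is irreducible and \emph{not hermitian}, hence non-unitary --- contradiction. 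In short, what actually blocks the length jump is that a non-hermitian irreducible cannot occur inside a direct sum of unitaries, and that requires the unitarity-deformation step your proposal omits. Neither the induction on $k$ nor the auxiliary family $\sigma(x)$ is needed once this is in place; the paper treats all $k$ factors at once.
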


{\bf Proof.} There exists $\e>0$ such that for all $i\in\k$
the cuspidal supports of \,  $\nu^xu'_i$ \, and of \, $\nu^{-x}u'_i$ \, are disjoint for all
$x\in ]0,\e[$.
Then, for all $i\in\k$, for all $x\in ]0,\e[$, the representation \, $\nu^xu'_i\times \nu^{-x}u'_i$ \,
is irreducible. As, by hypothesis, $u'_i\times u'_i$ is irreducible and unitary, that implies that
for all $x\in ]0,\e[$,
the representation \, $\nu^xu'_i\times \nu^{-x}u'_i$ \,
is also unitary (see for example [Ta3], section  (b)).
So \, $\prod\ki\nu^xu'_i\times \nu^{-x}u'_i$ \, is a sum of unitary representations.
But we have (in the Grothendieck group)
$$\prod_{i=1}^k(\nu^xu'_i\times \nu^{-x}u'_i) = (\nu^x\prod_{i=1}^k u'_i)\times (\nu^{-x}\prod_{i=1}^k u'_i).$$

Let us assume by absurd \, $\prod\ki u'_i$ \, is reducible.
Then it contains at least two \underline{different} unitary subrepresentations $\pi_1$ and $\pi_2$
(proposition  \ref{reduc}).
Then \, $(\nu^x\prod\ki u'_i)\times (\nu^{-x}\prod\ki u'_i)$ \, contains \, $\nu^x\pi_1\times \nu^{-x}\pi_2$ \,
as a subquotient for some $x\in ]0,\e[$ for which this representation is irreducible not hermitian
(by lemma \ref{hermitian}). We found an irreducible non unitary subquotient which contradicts our assumption.
\qed

\section{Local results}

\subsection{First results}

Let $\s'\in \d'^u_n$ and set $\s=\ccc^{-1}(\s')\in \d^u_{nd}$.
Write $\s'=T^u(\rho',l)$ for some $l\in \n^*$ and $\rho'\in \cusp_l$.

Let $k$ be a positive integer and set $k'=ks(\s')$.
 Then we have the
following:

\begin{theo}\label{main}

(a) One has
$$\lj(u(\s,k'))=\bar{u}(\s',k').$$

(b) The induced representation $\bar{u}(\s',k')\times \bar{u}(\s',k')$ is irreducible.

(c) Let $W_k^l$ be the set of permutation $w$ of $\{1,2,...,k\}$ such that $w(i)+l\geq i$ for all $i\in\{1,2,...,k\}$.
Then we have

$$\bar{u}(\s',k')=\nu^{-\frac{k'+l'}{2}-\frac{s(\s')-1}{2}}(\sum_{w\in W_k^l}(-1)^{sgn(w)}
\prod_{i=1}^{k'} T(\nu^i\rho',w(i)+l-i)).$$
\end{theo}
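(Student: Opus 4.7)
The strategy is to prove (a) and (c) simultaneously by expanding $u(\s,k')$ via Proposition \ref{formula1} and applying the transfer $\lj$, then to deduce (b) from (\ref{equa2}) together with the irreducibility trick.

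For (a) and (c): set $s=s(\s')$ and note that by the classification of subsection \ref{esi2}, $\s = \ccc^{-1}(\s') = Z(\rho,ls)$ for some $\rho\in\cusp$, so the Zelevinsky segment of $\s$ has length $l':=ls$. Proposition \ref{formula1} gives
$$u(\s,k')=\nu^{-(k'+l')/2}\sum_{w\in W_{k'}^{l'}}(-1)^{sgn(w)}\prod_{i=1}^{k'}Z(\nu^i\rho,w(i)+l'-i).$$
I then apply $\lj$, which is a $\z$-module morphism by Theorem \ref{eu}(b) and commutes with parabolic induction from every \sgls\ that transfers. The key computation is that $Z(\nu^i\rho,q)$ is $d$-compatible iff $s\mid q$, in which case $\lj(Z(\nu^i\rho,q))=T(\nu^i\rho',q/s)$; this follows from $\ccc^{-1}(T(\nu^i\rho',q/s))=Z(\nu^i\rho,q)$ (an iteration of $\ccc^{-1}(\nu^i\rho')=Z(\nu^i\rho,s)$). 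Since a product is $d$-compatible iff each factor is, only those $w\in W_{k'}^{l'}$ satisfying $s\mid w(i)-i$ for all $i$ contribute to $\lj(u(\s,k'))$.

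I reindex $\{1,\dots,k'\}$ by pairs $(j,i')$ via $i=j+(i'-1)s$ with $j\in\{1,\dots,s\}$, $i'\in\{1,\dots,k\}$. A surviving $w$ preserves residue classes mod $s$ and is therefore determined uniquely by an $s$-tuple $(w_1,\dots,w_s)\in(W_k^l)^s$ via $w(i)=j+(w_j(i')-1)s$. One checks $sgn(w)=\prod_j sgn(w_j)$, that $w(i)+l'\geq i$ translates into $w_j(i')+l\geq i'$, and that $(w(i)+l'-i)/s = w_j(i')+l-i'$. After this bookkeeping the alternating sum factorizes over $j$ into a product of sums over $w_j\in W_k^l$, which produces formula (c). To pass from (c) to (a) I recognise each inner factor as a $\nu^j$-twist of $u'(\s',k)$ via the $s=1$ specialisation of the formula (i.e.\ the $G'_n$-analog of Proposition \ref{formula1} for $u'(\s',k)$, which is the case already covered by this same computation when $\rho'$ is cuspidal), and then invoke the factorisation (\ref{equa2}) to assemble these twisted copies into $\bar{u}(\s',k')$.

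For (b), I use (\ref{equa2}) to realize $\bar{u}(\s',k')\times\bar{u}(\s',k')$ in $\rrr'$ as the induced product $\prod_{i=1}^{s}\bigl(\nu^{i-(s+1)/2}u'(\s',k)\bigr)^{\times 2}$. The inner self-pairing $u'(\s',k)\times u'(\s',k)$ is irreducible (it belongs to the known part of the unitary classification for $G'_n$, independent of this theorem), so each twisted self-pairing is irreducible as well by an overall $\nu$-twist. Irreducibility of the full product then follows from Proposition \ref{trick} applied to the unitary family $u'(\s',k)$ (after a preliminary $\nu^x$-deformation to pull the twist factors apart, as in the proof of Proposition \ref{trick}), or alternatively from a direct inspection of the Langlands segments of $\bar{u}(\s',k')$ via Proposition \ref{irred}.

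\textbf{Main obstacle.} The delicate step is the passage from (c) to (a): one must correctly track the $\nu^{-(s-1)/2}$ shift in (c) against the shift arising naturally from the grouping, and one must have the $s=1$ analog of Proposition \ref{formula1} available on the $G'_n$-side in order to recognise the factorised sum as a product of twisted copies of $u'(\s',k)$. A secondary subtlety in (b) is that the twist factors $\nu^{i-(s+1)/2}u'(\s',k)$ are not unitary for $s>1$, so Proposition \ref{trick} cannot be applied verbatim to the assembled product and requires either the $\nu^x$-deformation argument or replacement by a segment-linking check.
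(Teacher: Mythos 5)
Your proof has two genuine circularity gaps, one in (a) and one in (b), and both stem from invoking consequences of the theorem as if they were available inputs.

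\textbf{Gap in (a).} Your computation of $\lj(u(\s,k'))$ by transferring Proposition \ref{formula1} term by term is correct, and the reindexing by residue classes mod $s(\s')$ is exactly what the paper does; this yields an explicit element $V$ of $\rrr'_n$ written in the standard basis $\b'_n$, namely the right-hand side of (c). But equating $V$ with $\bar{u}(\s',k')$ \emph{is} (a), and your route to that identification — recognising each factor as a $\nu^j$-twist of $u'(\s',k)$ via ``the $G'_n$-analog of Proposition \ref{formula1}'' and then applying (\ref{equa2}) — uses the character formula for $u'(\s',k)$, which is precisely Corollary \ref{next}(b). In the paper that corollary is \emph{deduced} from Theorem \ref{main}(c), which in turn is deduced from (a). There is no independent ``$s=1$ specialisation'' available to break this loop: the $G'_n$-side analog of Tadić's formula is not known a priori. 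What is known a priori is only Theorem \ref{eu}(c), giving $\lj(u(\s,k'))=\bar{u}(\s',k')+\sum b_j\pi'_j$ with $\pi'_j<<\bar{u}(\s',k')$; proving all $b_j=0$ is the content of (a), and the paper does this by the Aubert-involution argument — transferring simultaneously $u(\s,k')$ and its dual $u(\tau,l')$, matching the two resulting identities via Theorem \ref{eu}(d) and Propositions \ref{dual} and \ref{br}, and then using a length estimate on the Zelevinsky--Moeglin--Waldspurger algorithm (Lemma \ref{impossible}) to show no error term can survive. Your proposal has no substitute for this step.

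\textbf{Gap in (b).} You assert that $u'(\s',k)\times u'(\s',k)$ being irreducible ``belongs to the known part of the unitary classification for $G'_n$, independent of this theorem.'' It does not: this is Corollary \ref{next}(a), and the paper obtains it \emph{from} Theorem \ref{main}(b). (What is known from [BR1] is that $u'(\s',k)$ is unitary, and irreducibility of the self-pairing for $s(\s')\ge 3$; the general case is exactly what (b) supplies.) Your alternative via Proposition \ref{irred} also fails: the segments in the multisegment of $\bar{u}(\s',k')$ are mutual $\nu^{s(\rho')}$-shifts of a fixed segment of length $l$, and consecutive shifts of a segment of length $l\ge 1$ are linked, so the ``not linked'' hypothesis of Proposition \ref{irred} does not hold for the self-product. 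The paper's actual proof of (b) mirrors the dual/length argument of (a), applied to the Langlands quotient $K_1$ of the esi-support of $\bar{u}(\s',k')\times\bar{u}(\s',k')$, and requires a refined length lemma allowing the multisegment to have up to $2k$ elements.

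\end{document}
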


{\bf Proof.}
(a) Set $\tau'=T^u(\rho',k)$ and $l'=ls(\s')$.
Then we have $\s=Z^u(\rho,l')$ and
$\tau=\ccc^{-1}(\tau')=Z^u(\rho,k')$ for some unitary cuspidal representation $\rho$ defined by
$\ccc(Z^u(\rho,s(\s')))=\rho'$.

We apply  the theorem \ref{eu} (c) to $\bar{u}(\s',k')$ and
$\bar{u}(\tau',l')$. We get

\begin{equation}\label{alfa1}
\lj(u(\s,k'))=\bar{u}(\s',k')+\sum_{\pi'_j<<\bar{u}(\s',k')}b_j\pi'_j
\end{equation}

and

\begin{equation}\label{alfa2}
\lj(u(\tau,l'))=\bar{u}(\tau',l')+\sum_{\tau'_q<<\bar{u}(\tau',l')}c_q\tau'_q
\end{equation}

We want to show that all the $b_j$ vanish.

Let us write the dual equation to \ref{alfa1} (cf. theo. \ref{eu} (d)).
 As
$|i(u(\s,k'))|=u(\tau,l')$ (proposition  \ref{dual})
and $|i'(\bar{u}(\s',k'))|=\bar{u}(\tau',l')$ (proposition  \ref{br}), we obtain:

\begin{equation}\label{alfa3}
\lj(u(\tau,l'))=\varepsilon_1 \bar{u}(\tau',l')  +\varepsilon_2 \sum_{\pi'_j<<\bar{u}(\s',k')}b_ji'(\pi'_j).
\end{equation}

for some signs $\varepsilon_1,\varepsilon_2\in\{-1,1\}$.
The equations \ref{alfa2} and \ref{alfa3} imply then the equality:
\begin{equation}\label{alfa4}
\bar{u}(\tau',l')+\sum_{\tau'_q<<\bar{u}(\tau',l')}c_{q}\tau'_q
=\varepsilon_1 \bar{u}(\tau',l')  +\varepsilon_2\ (\sum_{\pi'_j << \bar{u}(\s',k')}b_ji'(\pi'_j)).
\end{equation}
First, remark that since $\pi'_j\neq \bar{u}(\s',k')$ for all $j$,
we also have $|i'(\pi'_j)|\neq \bar{u}(\tau',l')$ for all $j$. So by
linear independence of irreducible representations in the
Grothendieck group, $\varepsilon_1=1$ and the term
$\bar{u}(\tau',l')$ cancel.

We will now show that the remaining equality

$$\sum_{\tau'_q<<\bar{u}(\tau',l')}c_{q}\tau'_q=\varepsilon_2\ (\sum_{\pi'_j << \bar{u}(\s',k')}b_ji'(\pi'_j)).$$
implies that all the coefficients $b_j$ vanish. The argument is the
linear independence of irreducible representations and the lemma:

\begin{lemme}\label{impossible}
If $\pi'_j<<\bar{u}(\s',k')$, it
is impossible to have $|i'(\pi'_j)|<<\bar{u}(\tau',l')$.
\end{lemme}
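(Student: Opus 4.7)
The plan is to transfer the entire statement to $GL_{ndk'}$ via $Q$ and conclude from the order-reversing behaviour of the Aubert involution on each block of irreducible representations sharing a common cuspidal support.

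First I set $\pi := Q(\pi'_j) \in Irr_{ndk'}$. Since $M_n$ lifts an essentially square integrable representation $T(\rho',a) \in \d'_l$ to $Z(\rho, a\, s(\rho')) \in \d_{ld}$, a direct check via the Langlands classification gives $Q(\bar{u}(\s',k')) = u(\s,k')$ and $Q(\bar{u}(\tau',l')) = u(\tau,l')$ (this is in fact implicit in the way the expansion (\ref{alfa1}) is extracted from Theorem \ref{eu}(c)). Unwinding the definition of $<<$, the hypothesis $\pi'_j << \bar{u}(\s',k')$ combined with $\pi'_j \neq \bar{u}(\s',k')$ becomes $\pi < u(\s,k')$ strictly in $Irr_{ndk'}$.

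Next I would establish the equivariance $Q \circ |i'_n| = |i_{nd}| \circ Q$. This holds because the Moeglin--Waldspurger algorithm ([MW1] for $GL$, [BR2] for $G'$) computes $|i|$ and $|i'|$ from multisegments by the same combinatorial recipe, and the segment-rescaling map $T(\rho',a) \mapsto Z(\rho, a\, s(\rho'))$ underlying $Q$ commutes with this recipe. Together with Proposition \ref{dual}, which gives $|i(u(\s,k'))| = u(\tau,l')$, the second hypothesis $|i'(\pi'_j)| << \bar{u}(\tau',l')$ then translates to $|i(\pi)| < u(\tau,l')$ strictly. To finish I invoke the order-reversing property of the Aubert involution on irreducibles with a fixed (rigid) cuspidal support: $\pi_1 < \pi_2$ strictly if and only if $|i(\pi_2)| < |i(\pi_1)|$ strictly. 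This is a standard consequence of the MW description of $|i|$ as the Zelevinsky dual on multisegments, the latter being order-reversing for the elementary-operation partial order. Applying this to $\pi < u(\s,k')$ yields $u(\tau,l') = |i(u(\s,k'))| < |i(\pi)|$ strictly, contradicting $|i(\pi)| < u(\tau,l')$ strictly and completing the \emph{reductio}.

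The main obstacle is the last step: the order-reversing property, which although well-known may need to be pinned down via an explicit combinatorial verification on multisegments of the relevant ``rectangle'' type (using the function $\l$ and $E$ from Lemma \ref{length} as auxiliary bookkeeping). A secondary point is the equivariance $Q \circ |i'| = |i| \circ Q$: this is essentially the combinatorial observation that both the MW algorithm and the segment-length rescaling act only through the positions of the segments in the cuspidal line, but it too deserves explicit mention before invoking it.
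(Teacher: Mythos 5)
Your setup is fine up to and including the translation of the first hypothesis: indeed $Q(\bar{u}(\s',k'))=u(\s,k')$ and $Q(\bar{u}(\tau',l'))=u(\tau,l')$, so $\pi'_j<<\bar{u}(\s',k')$ does become $Q(\pi'_j)<u(\s,k')$ strictly; and the order-reversing property of the Aubert--Zelevinsky involution on irreducibles with common cuspidal support is a genuine (if not entirely trivial) fact. The gap is the claimed equivariance $Q\circ|i'|=|i|\circ Q$, which is false. The map $Q$ replaces each segment of length $a$ on the $\rho'$-line, whose step is $\nu_{\s'}=\nu^{s(\s')}$, by a segment of length $a\,s(\s')$ on the $\rho$-line, whose step is $\nu$; this rescaling changes the set of endings $E(\cdot)$ and the adjacency pattern on which the Moeglin--Waldspurger recipe operates, so the two algorithms do \emph{not} give corresponding outputs. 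A minimal counterexample: take $s=s(\s')\ge 2$ and let $\pi'$ have multisegment $m'=\{\{\rho'\},\{\nu_{\s'}\rho'\}\}$ (so $\pi'=Lg(\rho'\times\nu_{\s'}\rho')$). Then $(m')^\#=\{\{\rho',\nu_{\s'}\rho'\}\}$, and $Q(|i'(\pi')|)=Z(\rho,2s)$, a single segment of length $2s$. On the other hand $Q(\pi')$ has multisegment $\{\{\rho,\ldots,\nu^{s-1}\rho\},\{\nu^{s}\rho,\ldots,\nu^{2s-1}\rho\}\}$, and since each segment of its dual multisegment draws at most one cuspidal from each of these two segments, every segment of $|i(Q(\pi'))|$ has length at most $2$. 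The two candidates are distinct irreducibles, already separated by the function $\l$. Consequently the hypothesis $|i'(\pi'_j)|<<\bar{u}(\tau',l')$ means $Q(|i'(\pi'_j)|)<u(\tau,l')$, not $|i(Q(\pi'_j))|<u(\tau,l')$; the \emph{reductio} needs the latter and it does not follow.

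The paper's proof sidesteps exactly this by replacing the full involution by the coarser invariant $\l$, for which the compatibility with $Q$ is immediate: $\l(Q(\cdot))=s(\s')\,\l(\cdot)$. Combined with the rigid decomposition of $\pi'_j$ into $s(\s')$ factors and the ``at most one cuspidal per original segment'' bound, this gives $\l(|i'(\pi'_j)|)\le k$, hence $\l(Q(|i'(\pi'_j)|))\le k'$; while $Q(|i'(\pi'_j)|)<u(\tau,l')$ strictly would force $\l(Q(|i'(\pi'_j)|))>k'$, since one elementary operation on the multisegment of $u(\tau,l')$ (a $k'\times l'$ rectangle with all segments of length $l'$) already pushes $\l$ above $k'$ and $\l$ is nondecreasing under elementary operations (Lemma~\ref{length}). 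If you want to push your plan through you must replace the false commutation by some invariant that does pass through $Q$ --- which is, in effect, exactly the $\l$-bookkeeping of the paper.
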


{\bf Proof.} The proof is complicated by the fact that we do not have in general equality $<\ =\ <<$ between the
order relations. But
this does not really matter.
Recall that $\pi'_j<<\bar{u}(\s',k')$, means by definition $Q(\pi_j)<Q(\bar{u}(\s',k'))$,
i.e. there exists $\pi_j<u(\s,k')$
such that the esi-support of $\pi'_j$
corresponds to the esi-support of $\pi_j$ element by element
by Jacquet-Langlands. This implies the only two properties we need:

(*) the cuspidal support of $\pi'_j$
equals the cuspidal support of $\bar{u}(\s',k')$ and

(**) we have the inclusion relation $E(M_{\pi'_j})\subset E(M_{\bar{u}(\s',k')})$ (lemma \ref{length}).

The property (*) imply in that, if
$$\pi'_j=a_1\times a_2\times...\times a_{x}$$
is a standard decomposition of $\pi'_j$ in a product of rigid
representations, then:

- $x=s(\s')$,

- we may assume that for $1\leq t\leq s(\s')$ the line of $a_t$ is
generated by $\nu^t\rho'$ and

- the multisegment $M_t$ of $a_t$ has maximum $k$ elements.\\
So, if
one uses the Zelevinsky-Moeglin-Waldspurger algorithm to compute the
esi-support $M_t^\#$ of $|i'(a_t)|$ (cf. [BR2]), one finds that $\l
(M_t^\#)\leq k$, since each segment in $M_t^\#$ is constructed by
picking up at most one cuspidal representation from each segment in
$M_t$. This implies that $\l(|i'(a_t)|)\leq k$. As
$$|i'(\pi'_j)|=|i'(a_1)|\times |i'(a_2)|\times...\times |i'(a_{x})|$$
we eventually have $\l(|i'(\pi'_j)|)\leq k$.

Assume now $|i'(\pi'_j)|<<\bar{u}(\tau',l')$. We will show that $\l(|i'(\pi'_j)|)> k$.
Set $Q(|i'(\pi'_j)|)=\gamma$ and we know that $\gamma<u(\tau,l')$. We obviously have in our particular situation
$\l(\gamma)=s(\s')\l(|i'(\pi'_j)|)$.
So we want to prove $\l(\gamma)>k'$.
The multisegment of $\gamma$ is obtained by a sequence of elementary operation from the
multisegment of
$u(\tau,l')$: at the first elementary operation on the multisegment of $u(\tau,l')$ we get a
multisegment $M'$ such that $\l(M')>k'$ and then we apply the lemma \ref{length}.
We get, indeed, $\l(\gamma)>k'$.

So our assumption leads to a
contradiction.\qed
\ \\

(b) The proof is similar to the one at the point (a), but uses it.
Let $\tau$ and $\tau'$ be defined like in (a).
By the point (a) we know now that
$$\lj(u(\s,k'))=\bar{u}(\s',k')\ \ \ \ \ \ \ \ \text{and}\ \ \ \ \ \ \ \ \lj(u(\tau,l'))=\bar{u}(\tau',l'),$$
so
$$\lj(u(\s,k')\times u(\s,k'))=\bar{u}(\s',k')\times \bar{u}(\s',k')$$
and
$$\lj(u(\tau,l')\times u(\tau,l'))=\bar{u}(\tau',l')\times \bar{u}(\tau',l').$$
Let us call $K_1$ the Langlands quotient of the esi-support of $\bar{u}(\s',k')\times \bar{u}(\s',k')$
and $K_2$ the Langlands quotient of the esi-support of $\bar{u}(\tau',l')\times \bar{u}(\tau',l')$. Using [BR2] it
is easy to see that $|i'(K_1)|=K_2$.
Then we may write, using theo. \ref{eu} (c) and proposition  \ref{reduc}:
\begin{equation}\label{prima}
\lj(u(\s,k')\times u(\s,k'))=K_1+\sum_{\pi_j<<K_1}b_j\pi'_j
\end{equation}
and
\begin{equation}\label{adoua}
\lj(u(\tau,l')\times u(\tau,l'))=K_2+\sum_{\xi'_m<<K_2}c_m\xi'_m.
\end{equation}
We want to prove that all the $b_j$ vanish. Let us take the dual in the equation \ref{prima} (cf. proposition  \ref{eu} (d)):

\begin{equation}\label{atreia}
\lj(i(u(\s,k')\times u(\s,k')))=\pm(i'(K_1)+\sum_{\pi_j<<K_1}b_ji'(\pi'_j)).
\end{equation}
We know that $|i(u(\s,k')\times u(\s,k'))|=u(\tau,l')\times u(\tau,l'))$ because $i$ commutes to the induction
functor and we have
$|i(u(\s,k'))|=u(\tau,l')$ by proposition  \ref{dual}.
As  $|i'(K_1)|=K_2$, we get from equations \ref{adoua} and \ref{atreia} after simplification with $K_2$
(as in the equation
\ref{alfa4}):

$$\sum_{\pi_j<<K_1}b_ji'(\pi'_j)=\pm(\sum_{\xi'_m<<K_2}c_m\xi'_m).$$
To show that all the $b_j$ vanish, it is enough by linear
independence of irreducible representations to show the following:

\begin{lemme}
If $\pi'<K_1$ it is impossible to have $|i'(\pi')|<K_2$.
\end{lemme}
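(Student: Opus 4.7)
The plan is to imitate closely the proof of Lemma \ref{impossible}, working entirely inside $Irr'$. Starting from $\pi'<K_1$, lemma \ref{length} gives both the equality of cuspidal supports of $\pi'$ and $K_1$, and the inclusion $E(M_{\pi'})\subset E(M_{K_1})$. Since $K_1=Lg(\bar{u}(\s',k')\times\bar{u}(\s',k'))$, the multisegment $M_{K_1}$ consists, on each of the $s(\s')$ lines generated by $\nu^t\rho'$ for $t\in\{1,\dots,s(\s')\}$, of $2k$ segments of cuspidal length $l$, namely the $k$ distinct segments of $\bar{u}(\s',k')$ on that line, each appearing with multiplicity $2$. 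Writing the standard decomposition $\pi'=a_1\times\cdots\times a_{s(\s')}$ (corollary \ref{rigid}) with $a_t$ rigid on the line of $\nu^t\rho'$, the two properties above force each $M_{a_t}$ to have at most $2k$ segments, with endings drawn (with multiplicity $\leq 2$) from the $k$ distinct endings of $M_{K_1}$ on that line.

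I would then invoke the Zelevinsky-Moeglin-Waldspurger algorithm (cf. [MW1] and [BR2]) to estimate $\l(|i'(a_t)|)$. Since each segment of the dual multisegment $M_{a_t}^\#$ is built by extracting at most one cuspidal representation from each segment of $M_{a_t}$, a first approximation yields $\l(|i'(a_t)|)\leq 2k$. I would then sharpen this to $\l(|i'(a_t)|)\leq k$ using the doubled structure of $E(M_{K_1})$: only $k$ distinct endings, each of multiplicity at most $2$, so the chain constructed by the algorithm can traverse at most $k$ distinct ending levels. Combining over $t$ yields $\l(|i'(\pi')|)\leq k$.

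Finally, suppose for contradiction that $|i'(\pi')|<K_2$. Since $\tau'=T^u(\rho',k)$, every segment of $\bar{u}(\tau',l')$ has cuspidal length $k$, so $\l(M_{K_2})=k$. The condition $|i'(\pi')|\neq K_2$ forces at least one elementary operation to be performed in passing from $M_{K_2}$ to $M_{|i'(\pi')|}$, and any such operation links two distinct segments into a strictly longer one; Lemma \ref{length} then gives $\l(M_{|i'(\pi')|})>k$, contradicting the bound $\l(|i'(\pi')|)\leq k$.

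The main obstacle will be the refinement of the ZMW bound from $2k$ down to $k$. The naive segment-count argument used in Lemma \ref{impossible} gives here only $\leq 2k$, and the sharper bound genuinely requires exploiting the multiplicity-two structure of $E(M_{K_1})$. This is the essential new input compared to Lemma \ref{impossible}, where the analogous argument was run in $Irr_{nd}$ via the transfer $Q$ and gained a factor $s(\s')$ from the Jacquet-Langlands correspondence on cuspidal supports---a factor that is unavailable in the present $Irr'$-internal argument.
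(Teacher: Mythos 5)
Your proposal follows the paper's argument closely: the same standard decomposition into rigid pieces $a_t$, the same target bound $\l(|i'(a_t)|)\le k$, and the same contradiction via the behaviour of $\l$ under elementary operations. Your observation that the naive segment count only gives $2k$ and that the sharpening must exploit the bounded ending set is exactly the paper's insight; what your sketch does not yet pin down (and what the paper handles by invoking remark~II.2.2 of [MW1]) is that each segment of $m^\#$ is built only from cuspidal representations in the ending set $E$ at the current round, and that removing the used representations shifts the window of $k$ consecutive possible endings down by one without enlarging it — this invariant is what makes the bound $\le k$ propagate through all rounds of the algorithm, not merely a count of ``ending levels''. Your lower-bound argument (a first elementary operation on $M_{K_2}$, whose segments all have length $k$, strictly increases $\l$, then Lemma~\ref{length}) is valid and in fact slightly cleaner than the paper's, which implicitly re-runs the $Q$-transfer argument from Lemma~\ref{impossible}. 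One correction to your closing paragraph: in Lemma~\ref{impossible} the upper bound $\l(|i'(\pi'_j)|)\le k$ was already obtained purely on the $Irr'$ side by segment count (each $M_{a_t}$ has at most $k$ elements there); the transfer $Q$ and the factor $s(\s')$ entered only the lower-bound half of that proof, which you have anyway replaced by the direct $Irr'$-internal version.
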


{\bf Proof.} The proof of the lemma \ref{impossible} apply here with a minor modification.
We write again
$$\pi'=a_1\times a_2\times...\times a_{s(\s')}$$
such
 that the line of $a_t$, $1\leq t\leq s(\s')$,
is generated by $\nu^t\rho'$. The difference here is that the multisegment $M$
of $a_t$ may have up to $2k$ elements.
We will prove though, in this case again:

\begin{lemme}
The multisegment $m^\#$ of $|i'(a_t)|$  verifies $\l (m^\#)\leq k$.
\end{lemme}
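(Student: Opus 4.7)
The strategy follows the pattern of the argument in the proof of Lemma~\ref{impossible}, but the naive counting bound ($|M|\le 2k$ would only yield $\l(m^\#)\le 2k$) is replaced by an order-theoretic argument based on the Aubert involution reversing the partial order on $Irr'$.

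\emph{Step 1 (Identify $b_t$ and its Aubert dual).} Write $K_1=b_1\times\cdots\times b_{s(\sigma')}$ for its standard rigid decomposition (corollary~\ref{rigid}). Since $\pi'<K_1$, we have $a_t\le b_t$ on the line generated by $\nu^t\rho'$. Up to a global twist (which does not affect $\l$), $b_t=Lg(u'(\sigma',k)\times u'(\sigma',k))$, whose multisegment is $2M^0$, where $M^0$ denotes the staircase multisegment of $u'(\sigma',k)$ consisting of $k$ length-$l$ segments. By proposition~\ref{br}(a) and the compatibility of $|i'|$ with parabolic induction,
$$|i'(b_t)|=Lg\bigl(u'(\tau',l)\times u'(\tau',l)\bigr),\qquad\tau'=T^u(\rho'^u,k),$$
whose multisegment $2M^{0\#}$ consists of $2l$ segments, each of length \emph{exactly} $k$. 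In particular $\l\bigl(M_{|i'(b_t)|}\bigr)=k$.

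\emph{Step 2 (Order-reversal).} The key ingredient is that the Aubert--Moeglin--Waldspurger involution $|i'|$ reverses the partial order on $Irr'$:
$$\pi_1\le\pi_2\ \Longleftrightarrow\ |i'(\pi_2)|\le|i'(\pi_1)|.$$
For $G_n$ this is the content of the MW algorithm in [MW1]; the transposition to $G'_n$ is supplied by [BR2]. Applied to $a_t\le b_t$ it gives $|i'(b_t)|\le|i'(a_t)|$, so that $m^\#=M_{|i'(a_t)|}$ is superior to $M_{|i'(b_t)|}=2M^{0\#}$ in the multisegment order.

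\emph{Step 3 (Conclude).} Lemma~\ref{length} asserts that $\l$ is decreasing on the poset of multisegments: iterating the inequality $\l(M)\le\l(M')$ along a chain of elementary operations, $M'<M$ implies $\l(M')\ge\l(M)$. Combining with Step~2,
$$\l(m^\#)\;\le\;\l\bigl(M_{|i'(b_t)|}\bigr)\;=\;k,$$
as required.

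\emph{Main obstacle.} The sole nontrivial input is the order-reversing property of $|i'|$ invoked in Step~2; the remainder is a formal combination of proposition~\ref{br}, lemma~\ref{length}, and the line-wise rigid decomposition (corollary~\ref{rigid}). A reader wishing to avoid citing the order-reversal as a black box can verify it directly for the concrete multisegments in play, tracing the ZMW algorithm on $2M^{0\#}$ and showing that any inverse chain of elementary operations starting from it stays within the slice of multisegments of maximum segment length $\le k$ (i.e.\ inverse elementary operations on a pair $\{S_1\cup S_2,S_1\cap S_2\}$ produce two linked segments strictly shorter than the union).
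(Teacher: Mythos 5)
Your Step 2 hinges on the assertion that the Aubert involution reverses the partial order on $Irr'_n$, and you attribute this to [MW1] with the transfer via [BR2]. That attribution is not correct: [MW1] establishes Zelevinsky's algorithm for computing the dual multisegment $m^\#$ from $m$, and [BR2] transports that algorithm to $GL_n(D)$; neither proves that $m\mapsto m^\#$ is order-reversing for the partial order of this paper. This is not a cosmetic gap. The paper itself goes to considerable lengths precisely to avoid invoking order-reversal: if it were freely citable, the \emph{outer} lemma (``$\pi' < K_1$ makes $|i'(\pi')| < K_2$ impossible'') would be immediate, since $\pi'<K_1$ would give $K_2=|i'(K_1)|<|i'(\pi')|$, contradicting $|i'(\pi')|<K_2$ by antisymmetry; yet the paper instead proves the outer lemma through a delicate chain of sub-lemmas. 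So you are using, as a black box, a statement the authors did not regard as available, and your ``Main obstacle'' paragraph does not repair this -- tracing the ZMW algorithm on $2M^{0\#}$ and controlling inverse elementary operations establishes only that $\l$ is decreasing along the order (which is Lemma~\ref{length} and is already used in your Step 3), not that $|i'|$ is order-reversing.

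The paper's actual proof of this lemma is different in nature and more elementary: it works directly with the MW algorithm. The hypothesis $\pi'\ll K_1$ forces, via Lemma~\ref{length}, the constraint $E(m)\subset\{\nu_{\rho'}^{\frac{l-k}{2}+1}\rho',\dots,\nu_{\rho'}^{\frac{l+k}{2}}\rho'\}$, a set of $k$ consecutive cuspidals. By remark II.2.2 of [MW1], every segment of $m^\#$ ending at the top cuspidal is assembled only from elements of $E(m)$, hence has length at most $k$; after deleting the cuspidals used, the surviving multisegment $m^-$ still has its endings inside a set of $k$ consecutive cuspidals (shifted by one), so the argument iterates. Your approach, if the order-reversal statement could be properly sourced or proved, would indeed be shorter and would sidestep all of this combinatorics -- but as written the key step is unsupported, and the burden of proof for that step is on you, not on [MW1].
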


This implies that $\l(\pi')\leq k$ and the rest of the proof goes the same as for (a).\\

{\bf Proof.}
Let us denote $m$ the multisegment of $a_t$ ($m$ and $m^\#$ respect the notations in [MW1]).
The multisegment $m^\#$ is obtained from $m$ using the
algorithm in [MW1] (cf. [BR2]). As $\pi'<<K_1$, one has $E(m)\subset \{\nu_{\rho'}^{\frac{l-k}{2}+1}\rho',
\nu_{\rho'}^{\frac{l-k}{2}+2}\rho',...,\nu_{\rho'}^\frac{l+k}{2}\rho'\}$ (it is the property (**)
given at the beginning of
the proof of the lemma \ref{impossible}). One
constructs all the segments of $m^\#$ ending with $\nu_{\rho'}^\frac{l+k}{2}\rho'$ using only cuspidal representations in
$E(m)$ (remark II.2.2 in  [MW1]). So the length of the constructed segments is at most $k$.
Let $m^-$ be the multisegment obtained from $m$ after we dropped off each segment of $m$ the cuspidal representations
used in this construction. We obviously have then $E(m^-)\subset \{\nu_{\rho'}^{\frac{l-k}{2}}\rho',
\nu_{\rho'}^{\frac{l-k}{2}+2}\rho',...,\nu_{\rho'}^{\frac{l+k}{2}-1}\rho'\}$ which has {\it again} $k$ elements.
So going through the algorithm
we will find that all the segments of $m^\#$ have length at most $k$.\qed
\\

(c) The point  (a) we have just proven allows us to transfer the
formula of the proposition  \ref{formula1} by $\lj$.

We have
$$\lj(u(\s,k'))=\nu^{-\frac{k'+l'}{2}}
(\sum_{w\in W_{k'}^{l'}}(-1)^{sgn(w)}\lj(\prod_{i=1}^{k'}Z(\nu^{i}\rho,w(i)+l'-i))).$$

The representations $\prod_{i=1}^{k'}Z(\nu^{i}\rho,w(i)+l'-i)$ are standard.
If
$w$ is such that, for some $i$,  $s(\s')$ does not divide
$w(i)-i$, then  $\lj(\prod_{i=1}^{k'}Z(\nu^{i}\rho,w(i)+l'-i))=0$.

If $w$ satisfies $s(\s')|w(i)-i$ for all $i$,
then $$\lj(\prod_{i=1}^{k'}Z(\nu^{i}\rho,w(i)+l'-i))=
\prod_{i=1}^{k'}T(\nu^{i-\frac{s(\s')-1}{2}}\rho',\frac{w(i)-i}{s(\s')}+l)).$$

In order to get the claimed formula one has roughly speaking to remark that, if $w$ satisfies
$s(\s')|w(i)-i$ for all $i$, then $w$ permutes the set of numbers between $1$ and $k$ which are
equal to a given number modulo $s(\s')$, that $w$ is determined by these induced permutations and that its signature
is the product of their signatures. This is lemma 3.1 in [Ta5].\qed



\begin{cor}\label{next}
Let $n,k\in \n^*$ and $\s'=\d'^u_n$.

(a) $u'(\s',k)\times u'(\s',k)$ is irreducible. $\pi(u'(\s',k),\a)$ are unitary.

(b) Write $\s'=T^u(\rho',l)$ for some unitary cuspidal representation $\rho'$.
Let $W_k^l$ be the set of permutation $w$ of $\{1,2,...,k\}$ such that $w(i)+l\geq i$ for all $i\in\{1,2,...,k\}$.
Then we have:
$$u'(\s',k)=\nu_{\s'}^{-\frac{k+l}{2}}(\sum_{w\in W_k^l}(-1)^{sgn(w)}
\prod_{i=1}^kT(\nu_{\s'}^{i}\rho',w(i)+l-i))$$
\end{cor}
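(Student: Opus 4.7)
The plan is to deduce both parts from Theorem \ref{main} combined with the factorization (\ref{equa2}). For the irreducibility statement in (a), set $k' = ks(\s')$ and use (\ref{equa2}) to write
$$\bar{u}(\s',k')\times\bar{u}(\s',k') = \prod_{i=1}^{s(\s')}\nu^{i-\frac{s(\s')+1}{2}}u'(\s',k)\;\times\;\prod_{j=1}^{s(\s')}\nu^{j-\frac{s(\s')+1}{2}}u'(\s',k).$$
By Theorem \ref{main}(b) this product is irreducible. Exactness of parabolic induction shows that a proper subrepresentation or quotient of any of the $2s(\s')$ factors produces a proper subrepresentation or quotient of the whole product; hence every factor is itself irreducible. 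In particular $u'(\s',k)\times u'(\s',k)$ is irreducible, since reducibility is preserved under twisting.

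For the unitarity of $\pi(u'(\s',k),\a)$, I invoke the standard complementary series argument already used in the proof of Proposition \ref{trick}. Since $u'(\s',k)$ is unitary ([Ba4], [BR1]), the product $u'(\s',k)\times u'(\s',k)$ is irreducible and unitary by the first step. The family $\a\mapsto \nu_{\s'}^\a u'(\s',k)\times\nu_{\s'}^{-\a}u'(\s',k)$ is hermitian for every real $\a$ (its esi-support is stable under $h$) and remains irreducible throughout $]0,\frac{1}{2}[$ by [Ta2]. Continuity from the unitary point $\a=0$ (cf.\ [Ta3], section (b)) then propagates unitarity throughout the open interval.

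For part (b), I combine Theorem \ref{main}(c) with (\ref{equa2}). The non-vanishing terms in (c) correspond to permutations $w\in W_{k'}^{l'}$ satisfying $s(\s')\mid w(i)-i$ for every $i$. Lemma 3.1 of [Ta5] provides a bijection between such $w$ and tuples $(w_0,\ldots,w_{s(\s')-1})\in(W_k^l)^{s(\s')}$, obtained by restricting $w$ to each residue class modulo $s(\s')$ and reindexing; the signature is multiplicative and the standard representation in (c) factorizes accordingly into $s(\s')$ sub-products, one per residue class. Matching this factorization with the right-hand side of (\ref{equa2}), which exhibits $\bar{u}(\s',k')$ as an irreducible product of the $s(\s')$ twists $\nu^{j-(s(\s')+1)/2}u'(\s',k)$, and invoking uniqueness of the expansion in the standard basis $\b'_n$ of $\rrr'_n$, each sub-product is identified as the claimed expression for $u'(\s',k)$ scaled by the corresponding $\nu$-twist. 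Isolating a single factor yields (b).

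The main obstacle is the bookkeeping of $\nu$-twists in (b): one must verify that the global prefactor $\nu^{-(k'+l')/2-(s(\s')-1)/2}$ from (c), the internal shifts $\nu^{i-(s(\s')-1)/2}$ inside each $T$, and the twists $\nu^{j-(s(\s')+1)/2}$ coming from (\ref{equa2}) redistribute exactly into the prefactor $\nu_{\s'}^{-(k+l)/2}$ in (b). Part (a) is more routine once Theorem \ref{main}(b) and the known unitarity of $u'(\s',k)$ are in hand.
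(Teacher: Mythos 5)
Your approach is the same as the paper's: deduce (a) from Theorem \ref{main}(b) together with (\ref{equa2}), and (b) from Theorem \ref{main}(c) together with (\ref{equa2}) and the combinatorics of Lemma 3.1 of [Ta5]. But two of your deductive steps are not quite right as written.

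In (a), your intermediate claim ``hence every factor is itself irreducible'' is the wrong conclusion to draw --- each factor $\nu^{a}u'(\s',k)$ was already known to be irreducible, and irreducibility of the factors does not imply irreducibility of $u'(\s',k)\times u'(\s',k)$. What the irreducibility of $\bar{u}(\s',k')\times\bar{u}(\s',k')$ actually gives is this: group the $2s(\s')$ factors into $s(\s')$ pairs of equal twist, so that the big product is parabolically induced from a Levi on which one block is the pair $\nu^{a}u'(\s',k)\otimes\nu^{a}u'(\s',k)$. If $u'(\s',k)\times u'(\s',k)$ had a proper subquotient, so would $\nu^{a}u'(\s',k)\times\nu^{a}u'(\s',k)$, and by exactness of parabolic induction so would the whole product. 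So it is the pair, not the single factor, that you must group.

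In (b), the phrase ``invoking uniqueness of the expansion in the standard basis $\b'_n$ of $\rrr'_n$'' does not by itself justify a factor-by-factor identification. Having two factorizations of the same element of $\rrr'_n$ does not give equality of the factors, since $\rrr'_n$ is not a unique factorization domain in any useful sense. The paper instead finishes as follows: let $U$ be the right-hand side of (b). One checks that $U$ is an alternating sum of $|W_k^l|$ distinct standard representations, that the term for $w$ trivial is maximal with coefficient $1$ and has Langlands quotient $u'(\s',k)$, and that therefore $U=u'(\s',k)+\sum_{j=1}^m b_j\pi'_j$ in $Irr'_n$-coordinates with $b_j\neq 0$ and the $\pi'_j$ distinct. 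Because the $s(\s')$ twists $\nu^{i-\frac{s(\s')+1}{2}}U$ are supported on pairwise distinct $\nu_{\rho'}$-lines (different residue classes modulo $s(\s')$), the product $\prod_{i=1}^{s(\s')}\nu^{i-\frac{s(\s')+1}{2}}U$ is a linear combination of exactly $(m+1)^{s(\s')}$ distinct irreducibles with nonzero coefficients. Since that product equals the irreducible $\bar{u}(\s',k')$, we get $m=0$ and hence $U=u'(\s',k)$. You should replace the appeal to ``uniqueness of expansion'' by this disjoint-lines counting argument. Your last paragraph correctly identifies that the $\nu$-twist bookkeeping (matching the prefactor $\nu^{-\frac{k'+l'}{2}-\frac{s(\s')-1}{2}}$ and the internal shifts with $\nu_{\s'}^{-\frac{k+l}{2}}$) must also be verified explicitly.
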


{\bf Proof.} (a) That $u'(\s',k)\times u'(\s',k)$ is irreducible is clear from the point (b) of the theorem
\ref{main} and the formula \ref{equa2}.
The fact that this imply that all the $\pi(u'(\s',k),\a)$ are unitary is explained in [Ta2].

b) We want to show that
$$u'(\s',k)=\nu_{\s'}^{-\frac{k+l}{2}}(\sum_{w\in W_k^l}(-1)^{sgn(w)}
\prod_{i=1}^kT(\nu_{\s'}^{i}\rho',w(i)+l-i)).$$
We exploit the equality
$$\bar{u}(\s',ks(\s'))=\prod_{j=1}^{s(\s')}\nu^{j-\frac{s(\s')+1}{2}}u'(\s',k)$$
and the character formula for $\bar{u}(\s',ks(\s'))$ obtained at theorem \ref{main} (c).

Set
$$U=\nu_{\s'}^{-\frac{k+l}{2}}(\sum_{w\in W_k^l}(-1)^{sgn(w)}
\prod_{i=1}^kT(\nu_{\s'}^{i}\rho',w(i)+l-i))\in \rrr'_n.$$

We have

$$\prod_{j=1}^{s(\s')}\nu^{j-\frac{s(\s')+1}{2}}U=$$
$$
=\nu^{-\frac{k+l}{2}s(\s')}
\prod_{j=1}^{s(\s')}\nu^{j-\frac{s(\s')+1}{2}}(\sum_{w\in W_k^l}(-1)^{sgn(w)}
\prod_{i=1}^kT(\nu_{\s'}^{i}\rho',w(i)+l-i))=$$
$$=\nu^{-\frac{k'+l'}{2}-\frac{s(\s')-1}{2}}(\sum_{w\in W_k^l}(-1)^{sgn(w)}
\prod_{i=1}^{ks(\s')}\nu^iT(\rho',w(i)+l-i))=\bar{u}(\s',ks(\s'))$$
which is irreducible.

The formula defining $U$ is an alternated sum of $|W_k^l|$ terms which are distinct elements of $\b'_n$. The term
$\prod_{i=1}^{k}\nu_{\s'}^{i-\frac{k+1}{2}}\s'$,
corresponding to $w$ trivial, is maximal. To prove it, one may use lemma \ref{length} and the fact that
one has ${\bf l}(\prod_{i=1}^{k}\nu_{\s'}^{i-\frac{k+1}{2}}\s')=l$, while ${\bf l}(t)>l$ for any
other term $t$ in the sum.
The Langlands quotient of this maximal term $\prod_{i=1}^{k}\nu_{\s'}^{i-\frac{k+1}{2}}\s'$
is $u'(\s',k)$ and appears then in the sum with coefficient 1.
 So we may write:
$$U=\pi'_0+\sum_{j=1}^m b_j\pi'_j$$
where $\pi'_0=u'(\s',k)$, $b_j$ are non-zero integers, $\pi'_j\in Irr'_n$ and the $\pi'_j$, $0\leq j\leq m$,
 are distinct, with the convention $m=0$ if $U=u'(\s',k)$.
The representation $\nu^{i-\frac{s(\s')}{2}}\pi'_j$ is rigid and
supported on the line generated by $\nu^{i-\frac{s(\s')}{2}}\rho'$.
For different $i$ in $\{1,2,...,s(\s')\}$, these lines are
different. So, as the $\pi'_j$ are distinct (and have distinct
esi-support) $\prod_{i=1}^{s(\s')}\nu^{i-\frac{s-1}{2}}U$ is a
linear combination of exactly $(m+1)^{s(\s')}$ irreducible distinct
representations each appearing with non zero coefficient. As it is
irreducible, we have $m=0$.\qed

\subsection{Transfer of $u(\s,k)$}

Let $k$, $l$, $p$ be positive integers, set $n=klp$ and let $\rho\in\cusp^{u}_p$ and
$\s=Z^u(\rho,l), \in \d^{u}_{lp}$, $\tau=Z^u(\rho,l)\in \d^{u}_{kp}$. Let $s$ be the smallest positive integer such that
$d|sp$. In the next proposition  we give the general result of the transfer of $u(\s,k)$. The question has no meaning
unless $d|n$ (i.e. $s|kl$) which we shall assume.

\begin{prop}\label{transfer}

(a) If $d|lp$ (i.e. $s|l$), then $\s'=\ccc(\s)$ is well defined; we have $s=s(\s')$ and
$$\lj(u(\s,k))=\bar{u}(\s',k).$$

(b) If $d|kp$ (i.e. $s|k$), then $\tau'=\ccc(\tau)$ is well defined; we have $s=s(\tau')$ and
$$\lj(u(\s,k))=\varepsilon |i'(\bar{u}(\tau',l))|$$
where $\varepsilon=1$ if $s$ is odd
and $\varepsilon=(-1)^{\frac{kl}{s}}$ if $s$ is even.

(c) If $d$ does not divide neither $lp$, nor $kp$ (i.e. $s$ does not
divide neither $l$ nor $k$), then  $\lj(u(\s,k))=0$.
\end{prop}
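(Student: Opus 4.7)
For part (a), since $s\mid l$ the transfer $\sigma'=\ccc(\sigma)$ is defined with $s(\sigma')=s$, and $\sigma'=T(\rho'_0,l/s)$ where $\rho'_0=\ccc(Z(\rho,s))$ is cuspidal. My approach is to apply $\lj$ (a $\z$-morphism, by Theorem~\ref{eu}(b)) term by term to the character formula of Proposition~\ref{formula1} for $u(\sigma,k)$. A standard representation $\prod_{i=1}^k Z(\nu^i\rho,w(i)+l-i)$ lies in the image of $M_n$, hence is not annihilated, exactly when every segment length $w(i)+l-i$ is divisible by $s$; under $s\mid l$ this reduces to $s\mid w(i)-i$ for all $i$. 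Such permutations preserve residue classes modulo $s$, so the surviving alternating sum factors as a product over the $s$ residue classes. Writing $k=qs+r$ with $0\le r<s$, the class of index $j$ has size $n_j=q+1$ for $j\le r$ and $n_j=q$ otherwise. Inside each class I recognize via Corollary~\ref{next}(b) the character expansion of a suitable $\nu$-twist of $u'(\sigma',n_j)$; reassembling these factors according to relations~\ref{equa1} and~\ref{equa2} produces exactly $\bar u(\sigma',k)$.

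For part (c), the same transfer of Proposition~\ref{formula1} applies. The surviving permutations must now satisfy $w(i)\equiv i-l\pmod s$ for every $i\in\{1,\ldots,k\}$. Since $w$ is a permutation, $\{w(i)\bmod s\}_{i=1}^k$ equals $\{i\bmod s\}_{i=1}^k$, so the congruence forces this multiset to coincide with its translate $\{(i-l)\bmod s\}_{i=1}^k$. A translation by $-l$ fixes the multiset of residues of $\{1,\ldots,k\}$ modulo $s$ precisely when $s\mid k$. The hypothesis $s\nmid l,\ s\nmid k$ rules out both the trivial ($s\mid l$) and non-trivial ($s\mid k$) ways this can hold, so no $w$ survives and every summand is annihilated.

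For part (b), I invoke Aubert duality. Proposition~\ref{dual} gives $i(u(\sigma,k))=\delta_1 u(\tau,l)$ with $\tau=Z^u(\rho,k)$ and some sign $\delta_1$. Since $s\mid k$, part~(a) applied to $u(\tau,l)$ (with $(l,k)$ exchanged) yields $\lj(u(\tau,l))=\bar u(\tau',l)$ where $\tau'=\ccc(\tau)$. Combining this with the intertwining $\lj\circ i_{nd}=(-1)^{nd-n}i'_n\circ\lj$ of Theorem~\ref{eu}(d) and the involutivity of $i'$ on the Grothendieck group gives
\[
\lj(u(\sigma,k))=(-1)^{nd-n}\delta_1\,i'(\bar u(\tau',l))=\varepsilon\,|i'(\bar u(\tau',l))|
\]
with $\varepsilon=(-1)^{nd-n}\delta_1\delta_2$, where $\delta_2$ is the sign in $i'(\bar u(\tau',l))=\delta_2|i'(\bar u(\tau',l))|$. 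The main obstacle is the exact evaluation of $\varepsilon$: the involution signs $\delta_1,\delta_2$ arise from the parity formula for the Aubert involution on $GL_{nd}$ and $GL_n(D)$ respectively, which is determined by the number of blocks of the Zelevinsky multisegments of $u(\sigma,k)$ and $\bar u(\tau',l)$; combined with $(-1)^{nd-n}=(-1)^{klp(d-1)/d}$ and using $d\mid sp$, careful bookkeeping on the rectangular structure of those multisegments yields, after cancellation, $\varepsilon=1$ when $s$ is odd and $\varepsilon=(-1)^{kl/s}$ when $s$ is even.
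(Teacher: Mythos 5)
Your strategy for (a) and (c) essentially matches the paper's. In (a), the paper also transfers the character formula of Proposition~\ref{formula1} term by term and matches it against the expansion of $\bar u(\s',k)$ from Corollary~\ref{next}(b) and relations~\ref{equa1},~\ref{equa2}; the reduction to independent permutations of the residue classes mod $s$, with multiplicative signature, is precisely lemma~3.1 of [Ta5], which the paper invokes. In (c), your multiset-translation argument is in fact a cleaner proof of the key lemma (that $s\mid l+w(i)-i$ for all $i$ forces $s\mid k$ or $s\mid l$): the paper proves the same statement by a more involved arithmetic reduction via $\gcd(s,l)$, while you observe directly that a permutation with $w(i)\equiv i-l\pmod{s}$ exists iff the multiset of residues of $\{1,\dots,k\}$ mod $s$ is invariant under translation by $-l$. (Your sentence ``a translation by $-l$ fixes the multiset $\ldots$ precisely when $s\mid k$'' should carry the qualifier ``assuming $s\nmid l$''; you do account for the case $s\mid l$ separately in the next sentence, so the argument as a whole is sound.)

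The gap is in (b). Your reduction $\lj(u(\s,k))=(-1)^{nd-n}\delta_1\delta_2\,|i'(\bar u(\tau',l))|$ via Proposition~\ref{dual} and Theorem~\ref{eu}(d) is correct, but the value $\varepsilon=(-1)^{nd-n}\delta_1\delta_2$ is asserted rather than computed: the ``careful bookkeeping on the rectangular structure of those multisegments'' is never carried out, and it is not a triviality --- $\delta_1$ and $\delta_2$ involve the parity of the Aubert involution for rather elaborate multisegments, and showing that these combine with $(-1)^{klp(d-1)/d}$ to give exactly $1$ when $s$ is odd and $(-1)^{kl/s}$ when $s$ is even is a genuine calculation. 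The paper does not attempt it here either: it simply refers to proposition~4.1(b) of [Ba4] for the sign. As written, your part (b) correctly reduces the claim to a sign identity, but that sign identity is left unproved; you would need either to perform the parity computation in detail or to cite the reference.
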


{\bf Proof.} (a) We have the formula of the decomposition of $u(\s,k)$ on the standard basis $\b_n$
(proposition  \ref{formula1}) so
we may compute the formula of the decomposition of $\lj(u(\s,k))$ on the standard basis $\b'_n$ by transfer.
In the meantime, we have the formula of the decomposition of $\bar{u}(\s,k)$ on the standard basis
$\b'_n$ using the formula
\ref{equa1}
and the
corollary  \ref{next} (b). The equality of the two decompositions on the basis $\b'_n$
leads again to the combinatoric lemma 3.1 in [Ta5].

b) Up to the sign $\e$, this is a consequence of the point (a) and the dual transform, theorem \ref{eu} (d),
since $|i(u(\tau,l))|=u(\s,k)$.
For the sign $\e$, see proposition  4.1, b) in [Ba4].

c) The proof is in [Ta6]. It is a consequence of the proposition
\ref{formula1} here, which is also due to Tadi\'c, and the following
lemma for which we give here a more straightforward proof.

\begin{lemme}
Let $k,l,s\in \n^*$. Assume there is a permutation $w$ of $\{1,2,...,k\}$ such that for all $i\in \{1,2,...,k\}$
one has $s|l+w(i)-i$. Then $s|k$ or $s|l$.
\end{lemme}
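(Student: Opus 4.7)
My plan is to translate the permutation condition into a counting statement about residues modulo $s$. For each $r \in \mathbb{Z}/s\mathbb{Z}$, set $n_r = \#\{i\in\{1,2,\ldots,k\} : i\equiv r \pmod{s}\}$. The divisibility hypothesis $s \mid l+w(i)-i$ reads $w(i)\equiv i-l \pmod{s}$, so $w$ sends the residue class $r$ bijectively onto the residue class $r-l$. Therefore $n_r = n_{r-l}$ for every $r$, i.e.\ $r\mapsto n_r$ is invariant under translation by $l$ in $\mathbb{Z}/s\mathbb{Z}$. Setting $d=\gcd(l,s)$, the orbits of this translation are exactly the cosets of the subgroup $\langle l\rangle = d\mathbb{Z}/s\mathbb{Z}$, each of cardinality $s/d$, so $n_{\bullet}$ is constant on each such coset.

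I then argue by contradiction: suppose that $s\nmid k$ and $s\nmid l$. Writing $k=qs+t$ with $0<t<s$, a direct count shows that $n_0=q$, that $n_r=q+1$ for $1\le r\le t$, and that $n_r=q$ for $t+1\le r\le s-1$. The assumption $s\nmid l$ gives $d<s$, so every coset of $\langle d\rangle$ has at least two elements.

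The heart of the argument is to test two specific cosets. The coset $\langle d\rangle$ contains both $0$ and $d$; the constancy of $n_{\bullet}$ forces $n_d=n_0=q$, hence $d\notin\{1,\ldots,t\}$, i.e.\ $t<d$. On the other hand, since $t\ge 1$, the coset $1+\langle d\rangle$ contains both $1$ (with $n_1=q+1$) and $1+d$, so the constancy forces $n_{1+d}=q+1$, giving $1+d\le t$. This contradicts $t<d$, completing the proof.

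The only real subtlety is making sure the two elements $0,d$ (resp.\ $1,1+d$) of the chosen cosets are genuinely distinct modulo $s$, which is guaranteed by $0<d<s$; the boundary cases $t=0$ and $d=s$ are precisely the two conclusions $s\mid k$ and $s\mid l$, so no additional work is required.
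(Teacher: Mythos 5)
Your proof is correct. Let me note the one place where you assert more than you immediately justify: you claim $w$ sends the residue class $r$ \emph{bijectively onto} the class $r-l$. What the hypothesis directly gives is an injection of the class $r$ into the class $r-l$, hence $n_r \le n_{r-l}$ for all $r$; summing over $r$ (both sides total $k$) then forces equality everywhere, which yields surjectivity. This is a one-line gap that is easy to close, and the rest of the argument (translation-invariance of $r\mapsto n_r$, the explicit values $n_0=q$, $n_r=q+1$ for $1\le r\le t$, $n_r=q$ for $t<r<s$, and the two incompatible coset tests) is sound; the wraparound case $1+d\equiv 0$ is harmlessly absorbed because it would force $q=q+1$ at the first test already.

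Your route differs from the paper's. The paper first sums all $k$ relations to get $s\mid kl$, disposing of the case $\gcd(s,l)=1$ immediately, and then performs a descent: writing $p=\gcd(s,l)>1$, it uses $p\mid w(i)-i$ to restrict $w$ to two arithmetic progressions mod $p$, reindex them as honest intervals $\{1,\dots,\lfloor k/p\rfloor\}$ and $\{1,\dots,\lfloor (k-1)/p\rfloor +1\}$, and apply the coprime case with $s/p$ and $l/p$ to each, concluding $\lfloor k/p\rfloor = \lfloor (k-1)/p\rfloor +1$ and hence $p\mid k$, then $s\mid k$. Your proof instead counts residues of $\{1,\dots,k\}$ modulo $s$ in one shot and exploits the fact that the count function is constant on cosets of $\gcd(l,s)\mathbb{Z}/s\mathbb{Z}$, reaching a contradiction from two well-chosen cosets. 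Your argument is more uniform and avoids the explicit reindexing of sub-progressions and the floor-function bookkeeping; the paper's version makes the role of $s\mid kl$ transparent and reduces cleanly to the coprime case, at the cost of a slightly more intricate combinatorial setup. Both hinge on the same underlying structure — $w$ shifts residue classes mod the gcd — but package it differently.
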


{\bf Proof.} Let [x] denote the bigger integer less than or equal to x. If $y\in \n$, let $\n_y$ denote the set
$\{1,2,...,y\}$.

Assume $s$ does not divide $l$. Summing up all the $k$ relations
$s|l+w(i)-i$ we find that $s|kl$. So, if $(s,l)=1$, then $s|k$.
Assume $(s,l)=p$. Then for all $i\in \{1,2,...,k\}$, $p|w(i)-i$. Let
$w_0$ be the natural permutation of $\n_{[\frac{k}{p}]}$ induced by
the restriction of $w$ to $\{p,2p,...,[\frac{k}{p}]p\}$ and $w_1$
the natural permutation of $\n_{[\frac{k-1}{p}]+1}$ induced by the
restriction of $w$ to $\{1,p+1,...,  [\frac{k-1}{p}] p+1\}$. Then
for all $i\in \n_{[\frac{k}{p}]}$ one has
$\frac{s}{p}|\frac{l}{p}+w_0(i)-i$, and for all $j\in
\n_{[\frac{k-1}{p}]+1}$ one has $\frac{s}{p}|\frac{l}{p}+w_1(j)-j$.
As now $(\frac{s}{p},\frac{l}{p})=1$ we have seen one has
$\frac{s}{p}|[\frac{k}{p}]$ and $\frac{s}{p}|[\frac{k-1}{p}]+1$.
This implies $[\frac{k}{p}]=[\frac{k-1}{p}]+1$ and so $p|k$. It
follows $\frac{s}{p}|\frac{k}{p}$, i.e. $s|k$.\qed

\subsection{New formulas}\label{newform}

The reader might have noticed that the dual of representations $u(\tau,l)$ and $u'(\tau',l)$ are of the same type,
while the dual of representations $\bar{u}(\tau',l)$ are in general more complicated. This
 is why the point (b) of the proposition  \ref{transfer} looks awkward, we couldn't express
$i'(\bar{u}(\tau',l))$ in terms of $\s'=\ccc(\s)$, and for the good
reason that $\ccc(\s)$ is not defined since the group on which $\s$
lives does not have the good size (divisible by $d$). Recall the
hypothesis was $s(\s')|k$. We explain here that one can get a
formula though, in terms of $u'(\s'_+,\frac{k}{s(\s')})$ and
$u'(\s'_-,\frac{k}{s(\s')})$, where $\s'_+=\ccc(\s_+)$ and
$\s'_-=\ccc(\s_-)$, where the representations $\s_+$ and $\s_-$ are
obtained from $\s$ by stretching it and shortening it as to have
good size to transfer. The formulas we will give here are required
for the global proofs.

Let $\tau'\in\d'_n$ and
$l=as(\s')+b$ with $a,b\in\n$, $1\leq b\leq s(\s')-1$. We start with  the formula \ref{equa1}:

$$\bar{u}(\tau',l)=\prod_{i=1}^b \nu^{i-\frac{b+1}{2}} u'(\tau',a+1)\times \prod_{j=1}^{s(\s')-b}
 \nu^{j-\frac{s(\tau')-b+1}{2}}u'(\tau',a).$$
So one may compute the dual of $\bar{u}(\tau',l)$ using
  proposition  \ref{dual}; if $\tau'=T^u(\rho',k)$,  we set $\s'_+=T^u(\rho',a+1)$
  and, if $a\neq 0$, $\s'_-=T^u(\rho',a)$; then
\begin{equation}\label{formulainvolution}
|i'(\bar{u}(\tau',l))|=\prod_{i=1}^b \nu^{i-\frac{b+1}{2}} u'(\s'_+,l)\times
\prod_{j=1}^{s(\tau')-b}
 \nu^{j-\frac{s(\tau')-b+1}{2}}u'(\s'_-,l)
\end{equation}
with the convention that if $a=0$ we make abstraction of the second product.

In particular the dual of a representation of type $\bar{u}(\s',k)$
is of the same type (i.e. some $\bar{u}(\gamma,p)$) if and only if
 $s(\s')|k$ or $\s'$ is cuspidal
and $k<s(\s')$. One can see it
comparing the formula \ref{formulainvolution} with the
formula \ref{equa2} and using the fact that a  product of
representations of the type $\nu^\a u'(\s',k)$ determines its factors up to permutation ([Ta2]).

This gives a formula for $\lj(u(\s,k))$ when $s$ divides $k$ but $s$
does not divide $l$ (case (b) of the proposition  \ref{transfer}).
Let $|\lj|(\s,k)$ stand for the one irreducible representation in
$\{\lj(u(\s,k)),-\lj(u(\s,k))\}$. Let $\rho\in \cusp_p^u$,
$\s=Z^u(\rho,l) \in \d_{lp}^u$ and let $s$ be the smallest positive
integer such that $d|ps$. Assume $s\neq 1$ and $l=as+b$, $a,b\in\n$,
$1\leq b\leq s-1$. Set $\s_+=Z^u(\rho,(a+1)s)$ and, if $a\neq 0$,
$\s_-=Z^u(\rho,as)$. Let $\s'_+=\ccc(\s_+)$ and, if $a\neq 0$,
$\s'_-=\ccc(\s_-)$. If $s|k$, if $k=k's$, then

\begin{equation}\label{product}
|\lj|(u(\s,k))=\prod_{i=1}^b \nu^{i-\frac{b+1}{2}}  u'(\s'_+,k')\times
\prod_{j=1}^{s(\s')-b}
 \nu^{j-\frac{s(\s')-b+1}{2}}u'(\s'_-,k'),
\end{equation}
with the convention that if $a=0$ we ignore the second product.

The following formula for the transfer is somehow artificial, but it
has the advantage of being symmetric in $k$ and $l$ and adapted to
both the cases (a) and (b) of the proposition \ref{transfer}. Let
$\rho\in \cusp_p$ for some $p\in \n^*$, and let $s$ be the smallest
positive integer such that $d|ps$. Set $\rho'=\ccc(Z^u(\rho,s))$ (in
particular $\rho'$ is cuspidal and $s(\rho')=s$). Let $k,l\in \n^*$.
Set $b=k-s[\frac{k}{s}]+l-s[\frac{l}{s}]$ and define a sign
$\varepsilon$ by $\varepsilon=1$ if $s$ is odd and
$\varepsilon=(-1)^{\frac{kl}{s}}$ if $s$ is even. Make the
convention that a product $\prod_{i=1}^0$ has to be ignored in a
formula. The representation $u(Z^u(\rho,l),k)$ is $d$-compatible if
and only if $s|k$ or $s|l$. In this case we have

\begin{equation}\label{beta}
\lj(u(Z^u(\rho,l),k))=\varepsilon
\prod_{i=1}^b \nu^{i-\frac{b+1}{2}}  u'\big( T^u(\rho',\bigg[ \frac{l}{s}\bigg] ),\bigg[ \frac{k-1}{s}\bigg] +1\big)
\end{equation}
$$
\times
\prod_{j=1}^{s-b}
 \nu^{j-\frac{s-b+1}{2}}u'\big( T^u(\rho',\bigg[ \frac{l-1}{s}\bigg] +1),\bigg[ \frac{k}{s}\bigg] \big),
$$
with the convention that in this formula we ignore the first product
if $[\frac{l}{s}]=0$ and the second product if $[\frac{k}{s}]=0$.
(As $s$ divides either $l$ or $k$ we cannot have
$[\frac{l}{s}]=[\frac{k}{s}]=0$.)

\subsection{Transfer of unitary representations}\label{unit0}
Let ${\mathcal U'}$ be the set of all the representations
$u'(\s',k)$ and $\pi(u'(\s',k),\a)$ where $k,l$ range over $\n^*$,
$\s'\in \cusp'_l$ and $\a\in ]0,\frac{1}{2}[$. Here we will use the
fact that the representations $u'(\s',k)$ are unitary so we will
assume the characteristic of the base field $F$ is zero. As Henniart
pointed out to me it is not difficult to lift the result to the non
zero characteristic case by the close fields theory, but it has not
been written yet.

The next proposition has been proven in [Ta6] under the assumption
of the $U_0$ conjecture of Tadi\'c. We prove it here without this
assumption.

\begin{prop}\label{unit}
(a) All the representations in $\mathcal U'$ are irreducible and unitary.

(b) If $\pi'_i\in {\mathcal U'}$, $i\in\k$, then the product
$\prod\ki \pi'_i$ is irreducible and unitary.

(c) If $u\in Irr^u_{nd}$, then $\lj(u)=0$ or $\lj(u)$ is an
irreducible unitary representation $u'$ of $G'_n$ up to a sign.

(d) Let $u'$ be an irreducible unitary representation of $G'_n$. If $u'\times u'$ is irreducible, then
$u'$ is a product of representations in $\mathcal U'$.
\end{prop}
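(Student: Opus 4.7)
Part (a) is already in hand: unitarity of $u'(\s',k)$ comes from [BR1] and [Ba4] in characteristic zero, and unitarity of $\pi(u'(\s',k),\a)$ is Corollary~\ref{next}(a).

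For (b) I would invoke the irreducibility trick (Proposition~\ref{trick}), which reduces the claim to $\pi'\times\pi'$ irreducible for every $\pi'\in\mathcal U'$. For $\pi'=u'(\s',k)$ this is Corollary~\ref{next}(a). For $\pi'=\pi(u'(\s',k),\a)$, the four-fold product $\nu^\a u'(\s',k)\times\nu^{-\a}u'(\s',k)\times\nu^\a u'(\s',k)\times\nu^{-\a}u'(\s',k)$ can be rearranged into $\nu^\a w\times\nu^{-\a}w$, where $w=u'(\s',k)\times u'(\s',k)$ is irreducible by Corollary~\ref{next}(a); the middle swap is legitimate at the level of Grothendieck classes since $\nu^\a u'(\s',k)\times\nu^{-\a}u'(\s',k)$ is irreducible. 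The esi-supports of $\nu^{\pm\a}w$ live on the two distinct lines generated by $\nu^{\pm\a}\rho'$ (as $2\a\in]0,1[$ is non-integer), so all cross-pairs are unlinked and Proposition~\ref{irred} yields irreducibility of $\pi'\times\pi'$. The trick then gives (b); unitarity of the product is automatic because normalized parabolic induction from unitary data is unitary.

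For (c), decompose $u\in Irr^u_{nd}$ as $u=\prod_i\pi_i$ with $\pi_i\in\mathcal U$ by Tadi\'c's classification, and use that $\lj$ commutes with parabolic induction (Theorem~\ref{eu}) to obtain $\lj(u)=\prod_i\lj(\pi_i)$. Proposition~\ref{transfer} and the explicit formulas (\ref{product}), (\ref{beta}) of Section~\ref{newform} show that each $\lj(\pi_i)$ is either $0$ or a sign times a product of elements of $\mathcal U'$; for a factor $\pi_i=\pi(u(\s,k),\a)$ one transfers the twists $\nu^{\pm\a}u(\s,k)$ and regroups them into $\pi(u'(\gamma,k'),\a)$-pieces, the reordering being legitimate because (b) ensures such products are irreducible and hence insensitive to factor order. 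Part (b) applied to the resulting product forces it to be irreducible and unitary, completing (c) up to a sign.

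Part (d), the most delicate, goes by lifting $u'$ to the split side: set $u=Q(u')=Lg(\ccc^{-1}(\s'_1),\ldots,\ccc^{-1}(\s'_r))$ where $u'=Lg(\s'_1,\ldots,\s'_r)$, so that $\lj(u)=u'+\sum b_j\pi'_j$ with $\pi'_j<<u'$ by Theorem~\ref{eu}(c). If $u$ can be shown to be unitary, then part (c) forces $\lj(u)$ to be a signed irreducible, so all the $b_j$ vanish and $\lj(u)=\pm u'$; Tadi\'c's classification on $G_{nd}$ then writes $u=\prod_i\pi_i$ with $\pi_i\in\mathcal U$, and the argument of (c) recovers $u'$ as a product of elements of $\mathcal U'$. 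To prove unitarity of $u$, I would first reduce via Corollary~\ref{rigid} to the case where $u'$ is rigid (the hypothesis passes to each rigid factor, since distinct lines contribute independent products; moreover, only $u'(\s',k)$-type elements of $\mathcal U'$ can occur in a rigid $u'$, as the $\pi(u'(\s',k),\a)$-type elements are spread over two lines), and then, on a single line, translate the condition that $M_{u'}$ has pairwise unlinked segments across the correspondence $\ccc^{-1}$, which stretches a segment of length $k$ for a cuspidal $\rho'$ with $s(\rho')=s$ into a segment of length $ks$ on the split side, into the esi-support shape of Tadi\'c's unitary parametrization. Verifying this combinatorial match---that stretched unlinked segments organize into Speh ladders on the split side---is the main obstacle of the proof.
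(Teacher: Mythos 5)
Your treatments of (a), (b), and (c) track the paper's approach: (a) reduces to Corollary~\ref{next}(a), (b) to Proposition~\ref{trick} plus Corollary~\ref{next}(a), and (c) to Proposition~\ref{transfer}, the formulas of Section~\ref{newform}, and the earlier parts. In (b) you usefully make explicit that $\pi(u'(\s',k),\a)\times\pi(u'(\s',k),\a)$ is irreducible, which the paper leaves implicit; note, though, that the twist in the definition of $\pi(u'(\s',k),\a)$ is by $\nu_{\s'}^{\pm\a}=\nu^{\pm\a s(\s')}$, not by $\nu^{\pm\a}$, so the correct reason the cross-segments are unlinked is that segments of $G'$-representations have spacing $\nu^{s(\s')}$ while the relative shift between the two halves is $\nu^{2\a s(\s')}$ with $2\a\notin\z$, rather than a claim about $\nu^{\pm\a}\rho'$ lying on distinct lines (which can fail if $2\a s(\s')\in\z$).

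For (d) you take a genuinely different route from the paper, and it has a fatal gap. The paper's argument is algebraic: given $u'\times u'$ irreducible, Proposition~\ref{trick} makes every product of $u'$ with elements of $\mathcal U'$ irreducible, and one then uses the fact that the $u'(\s',k)$ are prime elements ([Ta2], 6.2) to adapt Tadi\'c's $GL(n)$ argument from [Ta1] and conclude $u'\in\Pi\mathcal U'$. You instead lift $u'$ to $u=Q(u')$ on the split side, and reduce (d) to proving that $Q(u')$ is unitary --- which you flag as ``the main obstacle.'' But this is not merely hard, it is false in general. The representation $\pi$ of Lemma~\ref{nonunit} is a product of two elements of $\mathcal U'$ (namely $\pi(u'(St'_3,4),3/8)$ and $\pi(u'(St'_4,3),1/8)$, since $s(St'_3)=s(St'_4)=4$), so by (b) it satisfies the hypothesis of (d): $\pi\times\pi$ is irreducible. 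Yet Lemma~\ref{nonunit}(iii) states that $\pi$ is \emph{not} in the image of $|\lj^u|$. If $Q(\pi)$ were unitary, then (c) and Theorem~\ref{eu}(c) would give $\lj(Q(\pi))=\pm\pi$, placing $\pi$ in the image of $|\lj^u|$, a contradiction. So $Q(u')$ fails to be unitary in precisely the situations your reduction needs it, and the lifting strategy cannot establish (d). The primality argument the paper uses appears to be essential here.
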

\ \\

The point (a) is part of the Tadi\'c conjecture U2 in [Ta2].
It has already been solved for $s(\s')\geq 3$ in [BR1], remark 4.3,
 which is actually a remark due to Tadi\'c, not to the authors. The only problem, as explained in [Ta2], is
to show that the product $u'(\s',k)\times u'(\s',k)$ is irreducible.  This is just our corollary  \ref{next} (a).

(b) This follows from the irreducibility trick (the proposition  \ref{trick}) and the corollary  \ref{next} (a).

(c) This is a consequence of the proposition  \ref{transfer}, the formula \ref{product}
and of the points (a) and (b) here above.

(d) Suppose by absurd $u'\times u'$ is irreducible. Then any product containing $u'$ and representations in
$\mathcal U'$ is irreducible (by proposition  \ref{trick}). As $u'(\s',k)$ are prime elements
([Ta2], 6.2), the same proof as for $GL(n)$ (Tadi\'c, [Ta1]) shows that
$u'$ is a product of representations in $\mathcal U'$.\qed
\ \\

If $u'$ is like in the second situation of the point (c) we write
$u'=|\lj^u|(u)$.

Let $\Pi{\mathcal U'}$ be the set of products of representations in $\mathcal U'$.
Then $\Pi{\mathcal U'}$ is a set of irreducible unitary representations containing the $\bar{u}(\s',k)$
(formula \ref{equa1}). We have:

\begin{prop}\label{stable}
(a) The set $\Pi{\mathcal U'}$ is stable under $|i'|$.

(b) If $\pi$ is a $d$-compatible unitary representation of $G_{nd}$,
then $|\lj^u|(\pi)\in \Pi{\mathcal U'}$.
\end{prop}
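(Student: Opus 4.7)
For (a), since $i'$ commutes with parabolic induction, so does $|i'|$ on irreducibles (the intervening sign being absorbed by the absolute value), so it suffices to send the generators $u'(\s',k)$ and $\pi'(u'(\s',k),\alpha)$ of $\Pi\mathcal{U}'$ back to $\Pi\mathcal{U}'$. Proposition \ref{br}(a) directly gives $|i'(u'(\s',k))|=u'(\tau',l)\in\mathcal{U}'$, writing $\s'=T^u(\rho'^u,l)$ and $\tau'=T^u(\rho'^u,k)$. For $\pi'(u'(\s',k),\alpha)=\nu_{\s'}^{\alpha}u'(\s',k)\times\nu_{\s'}^{-\alpha}u'(\s',k)$, observe that $s(\s')=s(\tau')=s(\rho')$ and thus $\nu_{\s'}=\nu_{\tau'}$; using that $i'$ commutes with $\nu^x$-twists, applying $|i'|$ factorwise produces $\pi'(u'(\tau',l),\alpha)\in\mathcal{U}'$.

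For (b), use the Tadi\'c classification on $GL_{nd}$ to write $\pi=\prod_i v_i$ with each $v_i\in\mathcal{U}$ equal to $u(\s_i,k_i)$ or $\pi(u(\s_i,k_i),\alpha_i)$. Since $d$-compatibility passes through products and is stable under $\nu^x$-twisting, each $v_i$ and each $u(\s_i,k_i)$ is $d$-compatible. Because $\lj$ commutes with parabolic induction and with $\nu^x$-twists (a direct consequence of the defining character identity), $\lj(\pi)=\prod_i\lj(v_i)$ in the Grothendieck group. The proof is complete once one exhibits $W_i\in\Pi\mathcal{U}'$ with $\lj(v_i)=\pm W_i$: the product $\prod_i W_i\in\Pi\mathcal{U}'$ is then irreducible and unitary by Proposition \ref{unit}(b), and must coincide with $|\lj^u(\pi)|$ by Proposition \ref{unit}(c).

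When $v_i=u(\s_i,k_i)$, Proposition \ref{transfer} gives $|\lj(v_i)|\in\{\bar u(\s',k),\,|i'(\bar u(\tau',l))|\}$; the first sits in $\Pi\mathcal{U}'$ by regrouping the symmetric shifts of formulas \ref{equa1}/\ref{equa2} into complementary series $\pi'(u'(\s',k),y/s)$ (with $s=s(\s')$ and $y$ a half-integer satisfying $0\le y\le(s-1)/2$, so $y/s\in[0,1/2[$, the middle $y=0$ being an honest $u'(\s',k)\in\mathcal{U}'$), and the second then follows from part (a). When $v_i=\pi(u(\s_i,k_i),\alpha_i)=\nu^{\alpha_i}u(\s_i,k_i)\times\nu^{-\alpha_i}u(\s_i,k_i)$, the compatibilities yield $|\lj(v_i)|=\nu^{\alpha_i}W\times\nu^{-\alpha_i}W$ with $W=|\lj(u(\s_i,k_i))|\in\Pi\mathcal{U}'$. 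Writing $W=\prod_j w_j$ with each $w_j\in\mathcal{U}'$ of one of the two types above, redistribute
$$\nu^{\alpha_i}W\times\nu^{-\alpha_i}W=\prod_j(\nu^{\alpha_i}w_j\times\nu^{-\alpha_i}w_j),$$
and note that each factor either equals $\pi'(u'(\s',k),\alpha_i/s)$ (when $w_j=u'(\s',k)$) or, via symmetric pairing, $\pi'(u'(\s',k),(\alpha_i+y)/s)\times\pi'(u'(\s',k),|\alpha_i-y|/s)$ (when $w_j=\pi'(u'(\s',k),y/s)$), giving a product of $\mathcal{U}'$ elements.

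The main obstacle is the combinatorial check that these redistributed complementary-series parameters remain in the unitary range $[0,1/2[$; the naive bound $\alpha_i/s+y/s<1/(2s)+1/2$ is too weak, but the sharp inequality $y\le(s-1)/2$ read off from the explicit decompositions \ref{equa1}/\ref{equa2}, together with $\alpha_i<1/2$, yields $(\alpha_i+y)/s<1/2$ and $|\alpha_i-y|/s<1/2$ (with a degenerate value $0$ reducing to $u'\times u'\in\Pi\mathcal{U}'$).
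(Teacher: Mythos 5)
Your proof is correct, and it follows the same route as the paper, which records (b) only as a one-line citation of Proposition \ref{transfer}, the inclusion $\bar u(\s',k)\in\Pi\mathcal U'$, and part (a). The substantive content you add is the careful verification of the step the paper does not write out: when a factor $\pi(u(\s,k),\alpha)=\nu^\alpha u(\s,k)\times\nu^{-\alpha}u(\s,k)$ appears in the Tadi\'c decomposition of $\pi$, one must redistribute the twist $\nu^{\pm\alpha}$ over the $\mathcal U'$-factors of $W=|\lj|(u(\s,k))$ and check that the resulting complementary-series parameters stay in $[0,1/2[$. Your observation that the naive bound fails but the sharp bound $y\le\frac{s-1}{2}$ extracted from formulas (\ref{equa1})/(\ref{equa2}) gives $(\alpha\pm y)/s<1/2$ is exactly the point that makes the argument close, and it is a genuine (if small) gap in the paper's one-line proof. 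Two minor remarks: (i) since the $\pi'(u'(\s',k),y/s)$ factors coming from (\ref{equa1})/(\ref{equa2}) always have $y\ge 1/2$ while $\alpha<1/2$, the degenerate case $\alpha=y$ you flag never actually occurs, so the appeal to $u'\times u'$ is unnecessary (though harmless); (ii) in part (a) you correctly note that $s(\s')=s(\tau')$, hence $\nu_{\s'}=\nu_{\tau'}$, which is needed to land back on a bona fide element of $\mathcal U'$ --- the paper leaves this implicit as well.
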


{\bf Proof.} (a) is implied by proposition  \ref{br} (a).

(b) is implied by proposition  \ref{transfer}, the fact that $\bar{u}(\s',k)\in \Pi{\mathcal U'}$ and the point (a).\qed
\ \\

So we have a map $|\lj^u|$ from the set of unitary irreducible
$d$-compatible representations of $G_{nd}$ to the set $\Pi{\mathcal
U'}$. We prove here a lemma we will need further to construct
automorphic unitary representations of the inner form which do not
transfer to the split form.

\begin{lemme}\label{nonunit}
Assume $dim_FD=16$. Let $St'_3$ be the Steinberg representation of $GL_3(D)$ and $St'_4$ the Steinberg representation
of $GL_4(D)$. Let

$$\pi=\nu^{-\frac{3}{2}}u'(St'_3,4)\times\nu^{-\frac{1}{2}} u'(St'_4,3)\times\nu^{\frac{1}{2}} u'(St'_4,3)\times
\nu^{\frac{3}{2}}u'(St'_3,4).$$
Then

(i) $\pi$ is unitary,

(ii) we have $\pi<\bar{u}(St'_3,16)$ and

(iii) $\pi$ is not in the image of $|\lj^u|$.
\end{lemme}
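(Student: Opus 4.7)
The plan is to exploit the unique Tadi\'c factorization of $\pi$ as a product of complementary series in $\mathcal{U}'$, together with the line decomposition of induced representations on pairwise unlinked segments.

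I first note that $s(St'_3)=s(St'_4)=s(\mathbf{1}_D)=d=4$, so $\nu_{St'_3}=\nu_{St'_4}=\nu^4$. The four factors defining $\pi$ have cuspidal supports lying on four distinct lines (the cosets $1/2,3/2,5/2,7/2$ modulo $4$, measured from $\mathbf{1}_D$), so their segments are pairwise unlinked and proposition \ref{irred} makes the product irreducible and order-independent. I may therefore regroup
$$\pi \;=\; \bigl[\nu^{-3/2}u'(St'_3,4)\times\nu^{3/2}u'(St'_3,4)\bigr]\times\bigl[\nu^{-1/2}u'(St'_4,3)\times\nu^{1/2}u'(St'_4,3)\bigr] \;=\; \pi(u'(St'_3,4),3/8)\times\pi(u'(St'_4,3),1/8),$$
using $\nu^{3/2}=\nu_{St'_3}^{3/8}$ and $\nu^{1/2}=\nu_{St'_4}^{1/8}$ with $3/8,1/8\in(0,1/2)$. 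Both factors lie in $\mathcal{U}'$ by corollary \ref{next}(a), so proposition \ref{unit}(b) gives part (i), and this display is the unique Tadi\'c factorization of $\pi$.

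For (ii), formula \ref{equa2} yields $\bar{u}(St'_3,16)=\prod_{i=1}^{4}\nu^{i-5/2}u'(St'_3,4)$, a product of four rigid pieces on the same four lines. The outer pieces (twists $\nu^{\pm 3/2}$) coincide with those of $\pi$, so by the rigid decomposition of corollary \ref{rigid} the comparison reduces to $\nu^{\pm 1/2}u'(St'_4,3)<\nu^{\pm 1/2}u'(St'_3,4)$ on the inner two lines. By proposition \ref{br}(a), $u'(St'_4,3)=|i'(u'(St'_3,4))|$, and three explicit elementary operations fuse the four length-$3$ segments of $M_{u'(St'_3,4)}$ into the three length-$4$ segments of $M_{u'(St'_4,3)}$, giving $u'(St'_4,3)<u'(St'_3,4)$ and hence $\pi<\bar{u}(St'_3,16)$.

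For (iii) I will argue by contradiction. If $\pi=|\lj^u|(u)$ for some $u\in Irr^u_{48}$, then Tadi\'c's classification writes $u=\prod_j u_j$ with $u_j\in\mathcal{U}$, and multiplicativity of $|\lj^u|$ together with uniqueness of the Tadi\'c factorization in $\Pi\mathcal{U}'$ forces the multiset $\{\pi(u'(St'_3,4),3/8),\pi(u'(St'_4,3),1/8)\}$ to be the disjoint union over $j$ of the Tadi\'c factors of each $|\lj^u|(u_j)$. In particular some $u_j$ must contribute $\pi(u'(St'_3,4),3/8)$, so $|\lj|(u_j)$ contains $u'(St'_3,4)$ at both twists $\nu^{\pm 3/2}$. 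Since $\ccc^{-1}(\mathbf{1}_D)=St_d$, only the cuspidal datum $\rho=\mathbf{1}_F$ produces $u'(St'_3,\cdot)$ as a transfer factor; then inspection of formulas \ref{beta} and \ref{product} together with the bound $b\le s-1=3$ shows that the twists $\nu^{\pm 3/2}$ for $u'(St'_3,4)$ arise only in the $b=0$ branch of formula \ref{equa2}, forcing $u_j=u(St_{12},16)$. But its transfer is exactly $\bar{u}(St'_3,16)=\pi(u'(St'_3,4),3/8)\times\pi(u'(St'_3,4),1/8)$, so the extra Tadi\'c factor $\pi(u'(St'_3,4),1/8)$ must appear in $\pi$, contradicting its known factorization.

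The main obstacle will be the enumeration step in (iii), particularly ruling out a complementary series block $\pi(u(\s,k),\alpha)$ with $\alpha\in(0,1/2)$ that could generate the pair $\nu^{\pm 3/2}u'(St'_3,4)$ via shifted twists $\nu^{\mu\pm\alpha}$. The open-interval constraint $\alpha\in(0,1/2)$ is exactly what rules out the boundary solutions $\mu=1,\alpha=1/2$ and $\mu=3/2,\alpha=0$, and cross-pairing of $\nu^{\mu_1+\alpha}$ with $\nu^{\mu_2-\alpha}$ also fails because the achievable base twists $\mu$ from the transfer formulas are half-integers that never lie in $(1,3/2)\cup(3/2,2)$.
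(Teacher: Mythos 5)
Your argument is correct and rests on the same ingredients as the paper's: the Tadi\'c unique-factorization theorem, the computation $s(St'_3)=s(St'_4)=4$, formula \ref{equa2} for $\bar{u}(St'_3,16)$, and the transfer formula \ref{beta}. Parts (i) and (ii) match the paper's reasoning, just spelled out in more detail; in (ii) your observation $u'(St'_4,3)=|i'(u'(St'_3,4))|$ via proposition \ref{br}(a) is a nice conceptual hook, though the underlying elementary operations are what the paper appeals to as well.

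Where you diverge is in (iii). The paper extracts from formula \ref{beta} a single clean structural fact: each transferred block of $\nu^{\alpha}u'$--factors consists of consecutive (half)integer twists whose maximum is always $>-1/2$ (because the shift $\alpha$ lies in $(-1/2,1/2)$), so a twist $-3/2$ can never occur without the next one up, $-1/2$, occurring for the same core $u'(St'_3,4)$. Since $\pi$'s Tadi\'c factorization has $\nu^{-1/2}u'(St'_4,3)$ rather than $\nu^{-1/2}u'(St'_3,4)$, that already ends it. You instead attempt to pin down the unique Tadi\'c factor $u_j$ that could produce the pair $\nu^{\pm 3/2}u'(St'_3,4)$, concluding $u_j=u(St_{12},16)$ and noting its transfer forces the unwanted factor $\pi(u'(St'_3,4),1/8)$. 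This is logically sound and reaches the same contradiction, but the enumeration you sketch in the final paragraph --- ruling out complementary-series blocks $\pi(u(\sigma,k),\alpha)$ with $\alpha\in(0,1/2)$ --- is asserted rather than carried out, and it silently merges two subcases (boundary of a block vs.\ interior, where interior would force a twist $-5/2$ instead, still a contradiction but by a different comparison). The paper's ``no block containing $-3/2$ has maximum $\leq -3/2$'' observation handles all of these uniformly with no casework. Your route is defensible, but less economical; the price you pay is exactly the unfinished enumeration you flag at the end.
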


{\bf Proof.} (i) If $1_1$ is the trivial representation of
$D^\times$, we have $s(1_1)=4$. So $s(St'_3)=s(St'_4)=4$. By
definition of $\Pi {\mathcal U}'$    it is clear then that $\pi\in
\Pi {\mathcal U}'$.

(ii) By the formula \ref{equa2} we get

$$\bar{u}(St'_3,16)=
\nu^{-\frac{3}{2}}u'(St'_3,4)\times\nu^{-\frac{1}{2}} u'(St'_3,4)\times\nu^{\frac{1}{2}} u'(St'_3,4)\times
\nu^{\frac{3}{2}}u'(St'_3,4).$$

It is easy to prove that the esi-support of  $u'(St'_4,3)$ is
obtained from the esi-support of $u'(St'_3,4)$ by elementary
operations. So $\pi<\bar{u}(St'_3,16)$

(iii) Any unitary representation of $G_{nd}$ decomposes in a unique
way up to permutation of factors in a product of representations of
type $\nu^\a u(\s,k)$ and any unitary representation of $G_{nd}$
decomposes in a unique way up to permutation of factors in a product
of representations of type $\nu^\a u'(\s',k)$ ([Ta2]).
 The formula \ref{beta}
implies that if $\nu^{-\frac{3}{2}}u'(St'_3,4)$ appear in the
decomposition of an element of the image of $|\lj^u|$, then
$\nu^{-\frac{1}{2}}u'(St'_3,4)$ should appear too. So $\pi$ is not
in the image of $|\lj_u|$. \qed
\ \\

It is natural to ask how many antecedents has a given element $u'\in
\Pi{\mathcal U'}$. A product of representations of type
$\bar{u}(\s',k)$ and $|i'|\bar{u}(\s',k)$ may be equal to several
different similar products and it does not seem to exist a
manageable formula for the number of possibilities. They are of
course finite since the cuspidal support is fixed.

\subsection{Transfer of local components of global discrete series}\label{generic}
Let $\gamma\in Irr^u_n$ be a generic representation. Then one may
write
$$\gamma=\prod_{i=1}^m\nu^{e_i}\s_i$$
where $\s_i$ are square integrable and $e_i\in ]-\frac{1}{2},\frac{1}{2}[$ ([Ze]). As
    it is explained in the section 4.1 of [Ba4], for all $k\in\n$, the
representation $\prod_{i=0}^{k-1}(\nu^{\frac{k-1}{2}-i}\gamma)$ is a
standard representation and if we call
$Lg(\gamma,k)$ its Langlands quotient,
then we have
$$Lg(\gamma,k)=\prod_{i=1}^m\nu^{e_i}u(\s_i,k).$$
One may show that, as $\gamma$ was unitary, $Lg(\gamma,k)$ is unitary.
$\gamma$ is tempered if and only if all $e_i$ are zero.
As the local component of global
cuspidal representations are generic (see the next section),
by the Moeglin-Waldspurger classification
all local component of global discrete series of $GL_n$ are of the
type $Lg(\gamma,k)$, so it is important to know when do they transfer
to a non zero representation under $\lj$.

Write
$\s_i=Z^u(\rho_i,l_i)$, $\rho_i\in\cusp^u_{p_i}$.
Let $J$ be the set of integers $j\in\{1,2,...,m\}$ such that
$d|p_jl_j$.
Let $s_{\gamma,d}$ be the smallest positive integer $s$ such that for all
$i\in\{1,2,...,m\}\backslash J$, $d|p_is$. Then
$\lj(Lg(\gamma,k))\neq
0$ if and only if for all
$i\in\{1,2,...,m\}$ we have $\lj(u(\s_i,k))\neq 0$
if and only if $s_{\gamma,d}|k$ (by proposition  \ref{transfer}). Then
$$\lj(Lg(\gamma,k))=\prod_{i=1}^m\nu^{e_i}\lj(u(\s_i,k)).$$

\section{Basic facts and notations (global)}

Let $F$ be a global field {\it of characteristic zero} and $D$ a
central division algebra over $F$ of dimension $d^2$. Let
$n\in\n^*$. Set $A=M_{n}(D)$. For each place $v$ of $F$ let $F_v$ be
the completion of $F$ at $v$ and set $A_v=A\otimes F_v$. For every
place $v$ of $F$, $A_v\simeq M_{r_v}(D_v)$ for some positive number
$r_v$ and some central division algebra $D_v$ of dimension $d_v^2$
over $F_v$ such that $r_v d_v=nd$. We will fix once and for all an
isomorphism and identify these two algebras. We say that $M_n(D)$
{\bf is split} at the place $v$ if $d_v=1$. The set $V$ of places
where $M_n(D)$ is not split is finite. We assume in the sequel {\it
$V$ does not contain any infinite place}. For each $v$, $d_v$
divides $d$, and moreover $d$ is the smallest common multiple of the
$d_v$ over all the places $v$.

Let $G'(F)$ be the group $A^\times=GL_n(D)$. For every place $v\in
V$, set $G'_v= A_v^\times= GL_{r_v}(D_v)$. For every finite place
$v$ of $F$, we set $K_v=GL_{r_v}(O_v)$, where $O_v$ is the ring of
integers of $D_v$. We fix then a Haar measure $dg_v$ on $G'_v$ such
that $vol(K_v)=1$. For every infinite place $v$, we fix an arbitrary
Haar measure $dg_v$ on $G'_v$. Let $\aa$ be the ring of ad\`eles of
$F$. With these conventions, the group $G'(\aa)$ of ad\`eles of
$G'(F)$ is the restricted product of the $G'_v$ with respect to the
family of compact subgroups $K_v$. We consider the Haar measure $dg$
on $G'(\aa)$ which is the restricted product of the measures $dg_v$
(see [RV] for details). We see $G'(F)$ as a subgroup of $G'(\aa)$
via the diagonal embedding.

\subsection{Discrete series}

Let $Z(F)$ be the center of $G'(F)$. For every place $v$, let $Z_v$
be the center of $G'_v$. For every finite place $v$ of $F$, let
$dz_v$ be a Haar measure on $Z_v$ such that the volume of $Z_v\cap
K_v$ is one. The center $Z(\aa)$ of $G'(\aa)$ is canonically
isomorphic the restricted product of the $Z_v$ with respect to the
$Z_v\cap K_v$. On $Z(\aa)$ we fix the Haar measure $dz$ which is the
restricted product of the measures $dz_v$. On $Z(\aa)\bc G'(\aa)$ we
consider the quotient measure $dz\bc dg$. As $G'(F)\cap Z(\aa)\bc
G'(F)$ is a discrete subgroup of $Z(\aa)\bc G'(\aa)$, on the
quotient space $Z(\aa)G'(F)\bc G'(\aa)$ we have a well defined
measure coming from $dz\bc dg$. The measure of the whole space
$Z(\aa)G'(F)\bc G'(\aa)$ is finite.

Through all these identifications, $Z(F)$ is a subgroup of $Z(\aa)$.
Fix a unitary smooth character $\o$ of $Z(\aa)$, trivial on $Z(F)$.

Let $\lgg$ be the space of functions $f$ defined on $G'(\aa)$ with values in $\cc$ such that

i) $f$ is left invariant under $G'(F)$,

ii) $f$ verify $f(zg)=\o(z)f(g)$ for all $z\in Z(\aa)$ and all $g\in G'(\aa)$,

iii) $|f|^2$ is integrable over $Z(\aa)G'(F)\bc G'(\aa)$.\\

We consider the representation $R'_\o$ of $G'(\aa)$ by right
translations in the space $\lgg$. We call {\bf discrete series of}
$G'(\aa)$ any irreducible subrepresentation of any representation
$R'_\o$ for any unitary smooth character $\o$ of $Z(\aa)$ trivial on
$Z(F)$.

Every discrete series of $G'(\aa)$ with central character $\o$
appears in $R'_\o$ with a finite multiplicity. Every discrete series
$\pi$ of $G'(\aa)$ is isomorphic with a restricted Hilbertian tensor
product of (smooth) irreducible unitary representations $\pi_v$ of
the groups $G'_v$ like in [Fl1]. Each representation $\pi_v$ is
determined by $\pi$ up to isomorphism and is called the {\bf local
component of $\pi$ at the place $v$}. For almost all finite place
$v$, $\pi_v$ has a non zero fixed vector under $K_v$. We say then
$\pi_v$ is {\bf spherical}. In general, an admissible irreducible
representation $\s$ of $G'(\aa)$ decomposes similarly into a
restricted tensor product of smooth irreducible representations
$\s_v$ of $G'_v$ and $\s_v$ is spherical for almost all $v$ (see
[Fl1]).


Let $R'_{\o,disc}$ be the subrepresentation of $R'_\o$ generated by
the discrete series.  If $\pi$ is a discrete series we call the {\bf
multiplicity of $\pi$ in the discrete spectrum} the multiplicity
with which $\pi$ appear in $R'_{\o,disc}$.

\subsection{Cuspidal representations}

Let $\lgg_c$ be the subspace of all the functions $f$ in $\lgg$ verifying
$$\int_{N(F)\bc N(\aa)}f(ng)dn=0$$
for almost all $g\in G'(\aa)$ and for all unipotent radical $N$ of a
parabolic $F$-subgroup of $G'(F)$.

The space $\lgg_c$ is stable under $R'_\o$ and decomposes discretely
in a direct sum of irreducible representations. Such an irreducible
subrepresentation is called {\bf cuspidal}. It is automatically a
discrete series.

We let now $n$ vary. For all $n\in \n^*$ let $G'_n$ be the group of ad\`eles
of $GL_n(D)$ and $G'_{n,v}$ the local component of $G'_n$ at a place $v$.
Let $DS'_n$ be the set of discrete series of $G'_n$.

If $(n_1,n_2, ..., n_k)$ is an ordered set of positive integers such that $n_1+n_2+...+n_k=n$,
we call {\bf standard Levi subgroup} of $G'(F)$ a subgroup formed by diagonal matrices
by blocks of given sizes $n_1$, $n_2$, ..., $n_k$ in this order.

A {\bf standard Levi subgroup} of $G'_n(\aa)$ will be by definition
a subgroup defined by the ad\`ele group $L(\aa)$ of a standard Levi
subgroup $L$ of $G'(F)$. Let $L$ be like in the previous paragraph.
For every place $v$ of $F$, one has $d_v|n_i$ for all $1\leq i\leq
k$. If $L_v$ is the subgroup of $G'_v$ formed by diagonal matrices
by $k$ blocks of sizes $n_1/d_v$, $n_2/d_v$, ..., $n_k/d_v$ in this
order, then $L(\aa)$ is the restricted product of the $L_v$ with
respect to $L_v\cap K_v$.  We naturally identify $L$ with the
ordered product $\times_{i=1}^k G'_{n_i}$.

Let $\nu$ denote here the character $|\det|_F$ on $G'_n$, product of local characters $\nu_v=|\det|_v$ where
$|\ \ \ |_v$ is the normalized absolute value on $F_v$.

\subsection{Automorphic representations}

Let us recall some facts from [La]. Let $L=\times_{i=1}^k G'_{n_i}$
be a standard Levi subgroup of $G'_n$. For $1\leq i\leq k$, let
$\rho_i$ be a cuspidal representation of $G'_{n_i}(\aa)$ and $e_i$ a
real number. Set $\rho=\otimes_{i=1}^{k}\nu^{e_i}\rho_i$.

Then for each place $v$, the induced representation
$\Pi_v=ind_{L_v}^{G'_v}\rho_{v}$ is of finite length. For every
place $v$ where all the $\rho_{i,v}$ are spherical, $\Pi_v$ has a
unique subquotient $\pi_v$ which is a spherical representation. An
irreducible subquotient of $ind_{L(\aa)}^{G'_n(\aa)}\rho$ is said to
be a {\bf constituent} of $ind_{L(\aa)}^{G'_n(\aa)}\rho$. Then an
irreducible admissible representation $\s$ of $G'_n$ is
 a constituent of $ind_L^{G'_n}\rho$ if and only if for all $v$,
$\s_v$ is an irreducible subquotient of $\Pi_v$ and for almost all $v$, $\s_v=\pi_v$.
The notion of cuspidal representation
differs between [La] and here: here we allow only what would be in the [La] language {\it unitary}
 cuspidal representations. Using the proposition  2 in [La],
 an {\bf automorphic} representation $\mathcal A$ of $G'_n$ will be here
by definition a constituent of  $ind_{L(\aa)}^{G'_n(\aa)}\rho$ for
some $\rho$ as before. One would like to prove then that the couples
$(\rho_i,e_i)$ are all determined by $\mathcal A$ up to permutation.
This has been shown in [JS] in the case where $D=F$, and in the
present paper we will show it for general $D$. For the case $D=F$,
we will then call the non ordered multiset
$\{\nu^{e_1}\rho_1,\nu^{e_2}\rho_2,..., \nu^{e_k}\rho_k\}$ the
cuspidal support of $\mathcal A$. For the traditional definition of
automorphic representations we send to [BJ]; here we used an
equivalent one, cf. proposition  2 in [La]. Let us point out that a
discrete series is always a (unitary) automorphic representation.

Some other facts are known in the case $D=F$. But let us use the
following convention: we keep a general division algebra $D$ and the
notation adopted before. And we consider a second class of groups
$G_n=GL_n(F)$, i.e. particular case $D=F$ of what we have seen
before. All the definition adapt then to $G_n$, and we will write
$DS_n$ for the set of discrete series of $G_n(\aa)$.

\subsection{Multiplicity one theorems for $G_n$}

We recall in this subsection three facts about $G_n$. There is the
{\it multiplicity one theorem}: every discrete series of $G_n(\aa)$
appears with multiplicity one in the discrete spectrum. And the {\it
strong multiplicity one theorem}: if $\pi$ and $\pi'$ are two
discrete series of $G_n$ such that $\pi_v=\pi'_v$ for almost all
place $v$, then $\pi=\pi'$. This results may be found in [Sh] and
[P-S] (when $D=F$). We will prove them in this paper for general
$G'_n$.

One also knows that the local component of a cuspidal representation
of $G_n$ at any place is generic and unitary, hence an irreducible
product $\prod_{i=1}^m\nu^{e_i}\s_i$ where $\s_i$ are square
integrable representations and $e_i\in ]-\frac{1}{2},\frac{1}{2}[$
(see [Sh] and [Ze]).

\subsection{The residual spectrum of $G_n$}

We recall now the Moeglin-Waldspurger classification of discrete
series for groups $G_n(\aa)$. Let $m\in \n^*$ and $\rho\in DS_m$ be
a cuspidal representation. If $k\in\n^*$, then the induced
representation $\prod_{i=0}^{k-1}(\nu^{\frac{k-1}{2}-i}\rho)$ has a
unique constituent $\pi$ which is a discrete series (i.e. $\pi\in
DS_{mk}$). One has $\pi_v=Lg(\rho_v,k)$ for all place $v$ (we used
the definition of $Lg(\rho_v,k)$ of the section \ref{generic} since
$\rho_v$ is generic). We set then $\pi=MW(\rho,k)$. Discrete series
$\pi$ of groups $G_n(\aa)$, $n\in\n^*$, are all of this type, $k$
and $\rho$ are determined by $\pi$ and $\pi$ is cuspidal if and only
if $k=1$. These are results in [MW2]. We will prove further the same
classification holds for $G'_n(\aa)$

Let us prove, for further purposes, a proposition  generalizing the
strong multiplicity one theorem.

\begin{prop}\label{multone}
Let $\s_i\in DS_{n_i}$, $i\in\{1,2,...,k_1\}$, $\sum_{i=1}^{k_1} n_i=n$ and
$\tau_j\in DS_{m_j}$, $j\in\{1,2,...,k_2\}$, $\sum_{j=1}^{k_2}
m_j=n$.
Assume that for almost all finite places $v$ the local components of
the (irreducible) products $\s=\prod_{i=1}^{k_1}\s_i$ and
$\tau=\prod_{j=1}^{k_2}\tau_j$ at the place $v$ are equal. Then
$(\s_1,\s_2,...,\s_{k_1})$ equals up to a permutation
$(\tau_1,\tau_2,...,\tau_{k_2})$.
\end{prop}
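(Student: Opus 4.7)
The plan is to realize $\s=\prod_i\s_i$ and $\tau=\prod_j\tau_j$ as isobaric automorphic representations of $G_n(\aa)$ and then invoke two results of Jacquet--Shalika: strong multiplicity one for isobaric representations (upgrading the local equality $\s_v=\tau_v$ at almost all places to $\s=\tau$ globally) and the uniqueness of cuspidal support [JS, Theorem 4.4] (forcing the underlying cuspidal data to match). The Moeglin--Waldspurger classification of $DS_n$ then lets us translate the resulting statement about cuspidal supports back into the desired statement about the multisets $\{\s_i\}$ and $\{\tau_j\}$.

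First, by the MW classification write $\s_i=MW(\rho_i,k_i)$ with $\rho_i$ a unitary cuspidal automorphic representation of $G_{n_i/k_i}(\aa)$, and $\tau_j=MW(\rho_j^{*},l_j)$ analogously. By transitivity of parabolic induction, $\s$ is a constituent (in the sense of Langlands) of the induction from the cuspidal representation
\[
\rho=\bigotimes_{i=1}^{k_1}\bigotimes_{t=0}^{k_i-1}\nu^{(k_i-1)/2-t}\rho_i
\]
on the standard Levi $M=\prod_{i=1}^{k_1}\prod_{t=0}^{k_i-1}G_{n_i/k_i}$ of $G_n$; likewise $\tau$ is a constituent of induction from an analogous cuspidal datum $(M^{*},\rho^{*})$ built from the $\tau_j$'s.

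Second, strong multiplicity one for unitary cuspidal representations of $GL$ ([Sh], [P-S]), extended to isobaric sums by Jacquet--Shalika, gives $\s=\tau$ from the hypothesis $\s_v=\tau_v$ at almost all finite places. Then the uniqueness of cuspidal support [JS, Theorem 4.4] says $(M,\rho)$ and $(M^{*},\rho^{*})$ are Weyl-conjugate, so the two multisets of twisted cuspidals
\[
\bigl\{\nu^{(k_i-1)/2-t}\rho_i\bigr\}_{i,t}\qquad\text{and}\qquad\bigl\{\nu^{(l_j-1)/2-s}\rho_j^{*}\bigr\}_{j,s}
\]
coincide.

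Finally, we must reconstruct $\{(\rho_i,k_i)\}$ from this common multiset. Since each $\rho_i$ is unitary and $\s_i=MW(\rho_i,k_i)$ is a discrete series, the contribution of $\s_i$ to the cuspidal support is the symmetric segment $\{\nu^a\rho_i:|a|\le(k_i-1)/2\}$ centered at $\rho_i$. On each $\nu^{\z}$-orbit of unitary cuspidal representations, the decomposition of the common multiset into such symmetric segments is unique, by a greedy argument starting from the cuspidal representation of largest $|a|$. Applying MW in reverse yields $\{\s_i\}=\{\tau_j\}$ as multisets, as desired. The main obstacle is the rigidity step invoking [JS], which rests on the analytic theory of Rankin--Selberg $L$-functions; the combinatorial reconstruction of segments is elementary because the MW segments for unitary discrete series are forced to be symmetric about a unitary cuspidal.
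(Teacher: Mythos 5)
Your proof is correct and follows essentially the same route as the paper: reduce via [JS, Theorem~4.4] to equality of the cuspidal supports of $\s$ and $\tau$, then recover the multisets $\{\s_i\}$ and $\{\tau_j\}$ from the common cuspidal support by a combinatorial argument exploiting the fact that each $MW(\rho_i,k_i)$ contributes a segment of exponents symmetric about $0$. Two small remarks on presentation: the intermediate step ``$\s=\tau$'' is not needed, since [JS, Theorem~4.4] directly yields conjugacy of the cuspidal data from agreement of local components at almost all places; and the paper is slightly more careful than you are in separating the support into \emph{lines} $\{\nu^k\rho\}_{k\in\z}$ (segments from odd $k_i$) and \emph{shifted lines} $\{\nu^{k+\frac12}\rho\}_{k\in\z}$ (segments from even $k_i$) before applying the combinatorics --- your phrase ``on each $\nu^\z$-orbit of unitary cuspidal representations'' conflates the two, though the greedy argument works on each separately. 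The paper's explicit form of the combinatorial step (the number of segments $\{-p,\dots,p\}$ equals $f(p)-f(p+1)$ for the multiplicity function $f$) is equivalent to your greedy extraction from the largest $|a|$.
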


{\bf Proof.} By the theorem 4.4 in [JS], the cuspidal supports of
the automorphic representations $\s$ and $\tau$ are equal. We call a
{\bf line} the set of representations $\{\nu^k\rho\}_{k\in \z}$,
where $\rho$ is a cuspidal representation of some $G_m(\aa)$. We
call a {\bf shifted line}  the set of representations
$\{\nu^{k+\frac{1}{2}}\rho\}_{k\in \z}$, where $\rho$ is a cuspidal
representation of some $G_m(\aa)$. Thanks to the Moeglin-Waldspurger
classification we know that the set of the elements of the cuspidal
support of a given $\s_i$ or $\tau_j$ is either included in a line,
or in a shifted line. So we may then ``separate the supports'' and
reduce the problem to the case where there exists a line or a
shifted line $T$ such that the set of elements of the cuspidal
supports of all the $\s_i$ and all the $\tau_j$ are included in $T$.
Then there exists a cuspidal representation $\rho$ such that
$\s_i=MW(\rho,p_i)$ for all $i$ and $\tau_j=MW(\rho,q_j)$ for all
$j$. And moreover the $p_i$ and the $q_j$ are either all odd, or all
even. Let $X$ be the cuspidal support of $\s$ and $\tau$ in this
case. We show that $X$ determines the $\s_i$ up to permutation.

If the $p_i$ are all odd, the result is a consequence of the following
combinatorial lemma:
\begin{lemme}
Let $A$ be a multiset of integer numbers which may
 be written as a
reunion with multiplicities of sets of the form
$B=\{-k,-k+1,-k+2,...,k-2,k-1,k\}$. Then the sets $B$ are determined by
$A$.
\end{lemme}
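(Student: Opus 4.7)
The plan is to observe that the decomposition is recorded entirely by the multiplicities $m_k$ (for $k\in\n$) with which each $B_k$ occurs, and to recover these $m_k$ from $A$ by a simple telescoping count on the multiplicities of individual integers.

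First I would note that each $B_k$ is symmetric around $0$, so the multiset $A$ is itself symmetric: the multiplicity in $A$ of an integer $j$ equals that of $-j$. It therefore suffices to read off the $m_k$ from the multiplicities $a_j$ (for $j\geq 0$) of the nonnegative integers in $A$. The key elementary fact is that, for $k\geq 0$, the integer $k$ lies in $B_j$ if and only if $j\geq k$. Since only finitely many $m_j$ are nonzero (the multiset $A$ being finite), one gets
\[
a_k \;=\; \sum_{j\geq k} m_j
\]
for every $k\geq 0$.

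From this, $m_k=a_k-a_{k+1}$ for every $k\geq 0$, so the multiplicities $m_k$ are uniquely determined by $A$. This proves the lemma. There is no real obstacle: the only thing to check is the simple characterization ``$k\in B_j\iff j\geq |k|$'', which is immediate from the definition of $B_j$, and then the inversion of the partial sums is automatic because the sum is finite.
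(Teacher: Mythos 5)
Your proof is correct and is essentially identical to the paper's: both recover the multiplicity of each set $B_k$ from $A$ by noting that the multiplicity $a_p$ of $p\geq 0$ in $A$ equals $\sum_{j\geq p}m_j$, so $m_p=a_p-a_{p+1}$.
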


{\bf Proof.} Let $f:\z\to \n$ be the multiplicity map: $f(a)$ is the
multiplicity of $a$ in $A$. The number $f(a)$ is also the number of
sets $B$ containing $a$. If $a\geq 1$ if a set contains $a$ it
contains also $a-1$. So $f$ is decreasing on $\n$ and for all $p\in
\n$, the number of sets  $\{-p,-p+1,-p+2,...,p-2,p-1,p\}$ in $A$ is
exactly $f(p)-f(p+1)$.\qed

If the $p_i$ are even, the proof is essentially the same.
This finishes the proof of the proposition  \ref{multone}.\qed

\subsection{Transfer of functions}

For each finite place $v$ let $H(G'_{n,v})$ be the Hecke algebra of
locally constant functions with compact support on $G'_{n,v}$. Let
$H(G'_n)$ be the set of functions $f:G'_n(\aa)\to\cc$ such that $f$
is a product $f=\prod_vf_v$ over all places of $F$, such that $f_v$
is $C^\infty$ with compact support when $v$ is infinite, $f_v\in
H(G'_{n,v})$ when $v$ is finite and, for almost all finite place
$v$, $f_v$ is the characteristic function of $K_v$. We write then
$f=(f_v)_v$. As the local components of an automorphic
representation $\pi$ are almost all spherical, the product
$\prod_v\tr\pi_v(f_v)$ has a meaning for all $f=(f_v)_v\in H(G'_n)$
and we may set $\tr(\pi(f))=\prod_v\tr\pi_v(f_v)$. We adopt similar
notations and definitions for the groups $G_n$.

Let $v\in V$. We fix measures on maximal tori of $G_{nd,v}$ and
$G'_{n,v}$ in a compatible way and define orbital integrals $\Phi$
on $G_{nd,v}$ and $\Phi'$ on $G'_{n,v}$ for regular semisimple
elements with respect to these choices (see the section 2 of [Ba1]
for example). If
 $f_v\in H(G_{nd,v})$ and $f'_v\in H(G'_{n,v})$ we say that $f_v$ and $f'_v$
{\bf correspond} to each-other, and write  $f_v\lra f'_v$, if:

-  $f_v$ and $f'_v$ are supported in the set of regular semisimple
elements and

- for all $g\lra g'$ we have $\Phi(f_v,g)=\Phi'(f'_v,g')$ and

- for all regular semisimple $g\in G_{nd,v}$ which does not
correspond to any $g'\in G'_{n,v}$ we have $\Phi(f_v,g)=0$.\\
It is
known that for every $f'_v\in H(G'_{n,v})$ supported on the regular
semisimple set there exists $f_v\in H(G_{nd,v})$ such that $f_v\lra
f'_v$. Also, if $f_v\lra f'_v$ then $tr(\pi(f_v))=0$ for all
representation $\pi$ induced from a Levi subgroup of $G_{nd,v}$
which does not transfer (section 2 of [Ba1] for example).

For $f=(f_v)_v\in H(G_{nd})$ and $f'=(f'_v)_v\in H(G'_n)$ we write $f\lra f'$ and say that $f$ and $f'$
{\bf correspond} to each other if

i) $\forall v\notin V$ we have $f_v=f'_v$ and

ii) $\forall v\in V$ we have $f_v\lra f'_v$.\\
For every $f'=(f'_v)_v\in H(G'_n)$ such that for all $v\in V$ the
support of $f'_v$ is included in the set of regular semisimple
elements of $G'_v$ there exists $f\in H(G_n)$ such that $f\lra f'$.
If $f\in H(G_{nd})$, we say $f$ {\bf transfers} if there exists
$f'\in H(G'_n)$ such that $f\lra f'$.

\section{Global results}

\subsection{Global Jacquet-Langlands, multiplicity one and strong multiplicity one for inner forms}

\def\g{{\bf G}}

\def\tr{{\rm tr}}

For all $v\in V$, denote $\lj_v$ (resp. $|\lj|_v$) the
correspondence $\lj$ (resp. $|\lj|$), as defined at the subsection
\ref{extended}, applied to $G_{nd,v}$ and $G'_{n,v}$.

If $\pi\in SD_{nd}$ we say $\pi$ is $D${\bf -compatible} if, for all
$v\in V$, $\pi_v$ is $d_v$-compatible. Then $\lj(\pi_v)\neq 0$ and
$|\lj|_v(\pi_v)$ is an irreducible representation of $G'_{n}$
(proposition  \ref{main} (c)).

\def\f{{\bf G}}

\begin{theo}\label{correspondence}
(a) There exist a unique map $\f:DS'_{n}\to DS_{nd}$ such that for
all $\pi'\in DS'_n$, if $\pi=\f(\pi')$, one has
$|\lj|_v(\pi_v)=\pi'_v$ for all place $v\in V$, and $\pi_v=\pi'_v$
for all place $v\notin V$. The map $\f$ is injective. The image of
$\f$ is the set $DS^D_{nd}$ of $D$-compatible discrete series of
$G_{nd}(\aa)$.

(b) We have multiplicity one theorem for discrete series of
$G'_n(\aa)$: if $\pi'\in DS'_n$, then the multiplicity of $\pi'$ in
the discrete spectrum is one.

(c) We have strong multiplicity one theorem for discrete series of
$G'_n(\aa)$: if $\pi', \pi''\in DS'_n$, if $\pi'_v=\pi''_v$ for
almost all $v$, then $\pi'_v=\pi''_v$ for all $v$.

(d) For all $\pi'\in DS'_n$, for all place $v\in V$, $\pi'_v\in \Pi{\mathcal U'}$  (see
section  \ref{unit0}).
\end{theo}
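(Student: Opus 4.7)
The plan is to deduce the theorem by combining the local Jacquet--Langlands correspondence for unitary representations established in Section 3 (Proposition \ref{unit}, Proposition \ref{stable}) with the comparison of trace formulas carried out by Arthur--Clozel in [AC]. The latter supplies, for each pair of matching test functions $f\lra f'$ with $f\in H(G_{nd})$ and $f'\in H(G'_n)$, an identity between the discrete parts of the spectral sides of the two trace formulas, which is precisely the bridge one needs to transport spectral information from $G'_n(\aa)$ to $G_{nd}(\aa)$ and back.

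For existence in part (a), given $\pi'\in DS'_n$, I would form the candidate lift $\pi=\otimes_v\pi_v$ by taking $\pi_v=\pi'_v$ at $v\notin V$ and, at $v\in V$, $\pi_v$ the unique $d_v$-compatible unitary representation of $G_{nd,v}$ such that $|\lj|_v(\pi_v)=\pi'_v$; this exists and is unique by Proposition \ref{unit}(c) together with the irreducibility package of Proposition \ref{trick}. To show $\pi$ is actually a discrete series, I would choose $f'=\prod_v f'_v\in H(G'_n)$ that picks out $\pi'$ (a pseudocoefficient at a place in $V$ plus finitely many spherical functions used to separate unramified data via linear independence of Satake parameters), transfer to $f\in H(G_{nd})$ via the results of Section 4, and apply the [AC] discrete-spectral identity. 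On the $G_{nd}$ side the identity must then be supported on $D$-compatible discrete series with the prescribed local data; the classical strong multiplicity one for $GL_{nd}$ ([Sh], [P-S], [MW2]) isolates a unique such $\pi$, which by construction has the required local components.

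Injectivity is immediate: if $\mathbf{G}(\pi')=\mathbf{G}(\pi'')$, then applying $|\lj|_v$ at each $v\in V$ recovers $\pi'_v=\pi''_v$, while agreement outside $V$ is built into the definition, so $\pi'=\pi''$. For the image description, the same [AC] identity, read in the opposite direction starting from a $D$-compatible $\pi\in DS_{nd}$, produces a discrete series of $G'_n(\aa)$ with local components $|\lj|_v(\pi_v)$ at $v\in V$ and $\pi_v$ outside $V$, hence a preimage under $\mathbf{G}$. Part (b) then follows because the trace identity respects multiplicities and $GL_{nd}$ satisfies multiplicity one; part (c) because, if $\pi'_v=\pi''_v$ for almost all $v$, then $\mathbf{G}(\pi')_v=\mathbf{G}(\pi'')_v$ outside a finite set (outside $V$ immediately, and by the local correspondence at any remaining coincident places in $V$), so strong multiplicity one for $G_{nd}$ gives $\mathbf{G}(\pi')=\mathbf{G}(\pi'')$ and injectivity finishes the argument. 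Part (d) is then a direct application of Proposition \ref{stable}(b) to the unitary, $d_v$-compatible representation $\mathbf{G}(\pi')_v$.

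The main obstacle is the existence step in (a): one must extract from the [AC] identity not merely that \emph{some} $D$-compatible discrete series of $G_{nd}$ matches $\pi'$ at the unramified places, but that the particular representation with the prescribed local components at $v\in V$ does so with nonzero multiplicity. The delicate points are bookkeeping of the signs $(-1)^{nd-n}$ that appear in the character relations of Section 2.3 and Theorem \ref{eu}, and ensuring that the pseudocoefficient one picks at a chosen place in $V$ genuinely isolates $\pi'_v$ among all unitary constituents that could appear; this is where the full strength of Proposition \ref{unit}, guaranteeing the irreducibility and unitarity of the local lifts, becomes indispensable.
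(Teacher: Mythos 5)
Your high-level strategy is the right one --- the Arthur--Clozel comparison of $I_{disc,t,\mu}$ for $G_{nd}$ and $G'_n$, combined with the local unitary transfer and a linear-independence-of-characters argument --- but the proposal glosses over the single hardest technical obstruction, and it omits the inductive scaffolding that makes the argument actually close.

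The distribution $I_{disc,t,\mu}$ is \emph{not} a sum over discrete series: it contains, for every proper Levi $L_l$ of block type $(nd/l,\dots,nd/l)$, the terms $\frac{1}{l^2}\,\tr\bigl(M_{L_l}^{G_{nd}}(s_0,0)\,\rho^l(0,f)\bigr)$ coming from intertwining operators on induced representations $\rho^l$. When you write that the identity ``must then be supported on $D$-compatible discrete series with the prescribed local data,'' you are implicitly dropping these terms, but they are present on \emph{both} sides and the whole difficulty of the proof is to show they cancel. The paper handles this by an induction on $n$: one must already know multiplicity one and the transfer $\g_k$ for all $k<n$ (Lemma \ref{xxx} explicitly hypothesizes this) in order to (i) identify the analogous Levi contributions on the $G'_n$ side, (ii) know the $\rho'^l$ there are irreducible so $M_{L'_l}^{G'_n}(s_0,0)$ acts by a unimodular scalar $\lambda_{\rho'}$, and (iii) transfer the Levi terms across. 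After the standard separation by a finite set $S$ of places (using the finiteness results of [BB] and [Ba3], which your argument also needs but does not cite), one is left with a relation in which the only potentially nonzero Levi contribution carries the coefficient $\frac{\lambda_\gamma - \lambda_{\g^{-1}_{n/l}(\gamma)}}{l^2}$, and the decisive observation is that this cannot be a nonzero integer because its modulus is $<1/2$; linear independence of characters then forces it to vanish. Without this point your extraction of a single discrete series on the $G_{nd}$ side, your multiplicity-one claim in (b), and your image description in (a) all remain unjustified, since the $\lambda$-terms could a priori contaminate the count. Your treatment of (c) and (d) is fine once (a) and (b) are in place, and the injectivity remark is correct, but the existence step and the multiplicity statement need the induction plus the $\lambda$-modulus argument to be a proof.
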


{\bf Proof.} We will use the results in [AC]. The authors compare the trace formulas of $G_{nd}$ and of $G'_n$.
We will restate the result here.

Let $F^*_\infty$ be the product $\times_i F^*_i$ where $i$ runs over the set of infinite places of $F$. Let $\mu$ be
a unitary character of $F^*_\infty$. We use the embedding of $F^*_\infty$ in $\aa^\times$ trivial at finite places
to realize it as a subgroup of the center $Z(\aa)$.

Let ${\mathcal L}(G_{nd})$ be the set of $F$-Levi subgroups of $G_{nd}$ which contains the maximal diagonal torus.

Let
$$I_{disc,t,\mu,G_{nd}}(f)=$$
$$\sum_{L\in {\mathcal L}(G_{nd})} |W_0^L||W_0^{G_{nd}}|^{-1}\sum_{s\in W({\frak a}_L)_{reg}}
|\det(s-1)_{{\frak a}_L^{G_{nd}} }|^{-1}
\tr(M_L^{G_{nd}}(s,0)\rho_{L,t}(0,f))$$ where, in order of the
apparition:

- $t\in \r_+$;

- $|W_0^L|$ is the cardinality of the Weyl group of $L$;

- $|W_0^{G_{nd}}|$ is the cardinality of the Weyl group of $G_{nd}$;

- ${\frak a}_L$ is the real space $Hom(X(L)_F,\r)$ where $X(L)_F$ is
the lattice of rational characters of $L$; $W({\frak a}_L)$ is the
Weyl group of ${\frak a}_L$ of $L$; ${\frak a}_L^{G_{nd}}$ is the
quotient of ${\frak a}_L$ by ${\frak a}_{G_{nd}}$; $W({\frak
a}_L)_{reg}$ is the set of $s\in W({\frak a}_L)$ such that
$\det(s-1)_{{\frak a}_L^{G_{nd}}}\neq 0$;

- $M(s,0)$ it the intertwining operator associated to $s$ at the point $0$; it intertwines representations
$ind_L^{G_{nd}}\s$ and $ind_{sL}^{G_{nd}}s\s$, where $\s$ is a representation of $L$;

- $\rho_{L,t}$ is the induced representation with respect to any parabolic subgroup
with Levi factor $L$ from the direct sum of discrete series $\pi$
of $L$ such that $\pi$ is $\mu$-equivariant
and the imaginary part of the Archimedean infinitesimal character of $\pi$
has norm $t$ ([AC], page 131-132);

- $f$ is an element of $H(G_{nd})$.

For this definition see page 198, and the formula (4.1) page 203,
in [AC].  It is the ``$\mu$ formula'', and not the original
definition-equality (9.2) page 132, which does not contain any $\mu$.

Now let us compute the terms. It turns out that $W({\frak
a}_L)_{reg}$ is empty unless $L$ is conjugated to a Levi subgroup
given by matrices by blocks of equal size. Let $L$ be the Levi
subgroup given by diagonal matrices by $l$ blocks of size $m$,
$lm=nd$. If we identify $W({\frak a}_L)$ with $\frak{S}_l$, then
$W({\frak a}_L)_{reg}$ is the set of $l$-cycles. So the cardinality
of $W({\frak a}_L)_{reg}$ is $(l-1)!$ and for any $s\in W({\frak
a}_L)_{reg}$, $|\det(s-1)_{{\frak a}_L^{G_{nd}} }|=l$. We also have
$|W_0^L|=(m!)^l$ and $|W_0^{G_{nd}}|=(nd)!$. So the coefficient of
the character attached to $L$ in the linear combination over
${\mathcal L}(G_{nd})$
 is
$\frac{(m!)^l}{(nd)!\, l}$. Now, if $L'$ is conjugated with $L$, the contribution of $L'$ to the sum
is the same as that of $L$ ([AC], page 207).
Let us compute the number of Levi subgroups $L'$ conjugated to $L$, and containing
the diagonal torus. The diagonal torus is then a maximal torus of $L'$, and so the center of $L'$ is contained
in the diagonal torus. As $L'$ is the centralizer of its center there will be exactly as many $L'$
as the non ordered partitions of $\{1,2,...,nd\}$ in $l$ subsets of cardinality $m$.
This number is $l!^{-1}C_{nd}^{nd-m}
C_{nd-m}^{nd-2m}C_{nd-2m}^{nd-3m}...C_{2m}^m$, which is $\frac{(nd)!}{l!(m!)^l}$ (for
 a more theoretical formula for the same result see [AC], page 207).

So we may rewrite the formula: if for all $l|nd$, $L_l$ is the Levi subgroup of $G_{nd}$ given by
matrices by $l$ blocks of equal size $\frac{nd}{l}$, then
$$I_{disc,t,\mu,G_{nd}}(f)=$$
$$\sum_{l|nd}\frac{1}{l!\, l}\sum_{s\in W({\frak a}_L)_{reg}}
 \tr(M_{L_l}^{G_{nd}}(s,0)\rho_{L_l,t}(0,f)).$$

In [AC] it is shown moreover, page 207-208, that for any $L_l$, the $(l-1)!$ elements $s\in W({\frak a}_L)_{reg}$ give
all the same contribution to the sum. So, in the end, if $s_0$ is the cycle $(1,2,...,l)$, the definition of
$I_{disc,t,\mu,G_{nd}}(f)$ turns out to be simply:

$$\sum_{l|nd}\frac{1}{l^2}
 \tr(M_{L_l}^{G_{nd}}(s_0,0)\rho_{L_l,t}(0,f)).$$

Let us turn now to the operator $M_{L_l}^{G_{nd}}(s_0,0)\rho_{L_l,t}(0,f)$. A discrete series $\rho$ of $L_l$ is an
ordered product $\otimes_{i=1}^l \rho_i$, where each $\rho_i$ is a discrete series of $G_{\frac{nd}{l}}$.
Let $Stab_\rho$ be the subgroup of $\frak{S}_l$ which stabilizes the ordered multiset $(\rho_1,\rho_2,...,\rho_l)$
 for the obvious action.
Let $X_\rho$ be a set of representatives of $\frak{S}_l/Stab_\rho$ in $\frak{S}_l$.
Let $V_\rho$ be the subspace $\oplus_{x\in X_\rho} \times_{i=1}^l \rho_{x(i)}$
of $\rho_{L_l,t}$. Then $V_\rho$ is stable under the operator $M_{L_l}^{G_{nd}}(s_0,0)$. But, if the $\rho_i$ are not
all equal, $M_{L_l}^{G_{nd}}(s_0,0)$ permutes without fixed point
the subspaces $\times_{i=1}^l \rho_{x(i)}$.
So the trace of the operator induced by $M_{L_l}^{G_{nd}}(s_0,0)$ on $V_\rho$ is zero.
Remain in the formula then only contributions from representations $\rho=\otimes_{i=1}^l \rho_i$ of $L_l$
such that all the $\rho_i$ are equal. So

$$I_{disc,t,\mu,G_{nd}}(f)=\sum_{\rho\in DS_{nd,t,\mu}}\tr(\rho(f))
+\sum_{l|nd,\, l\neq nd}\frac{1}{l^2}
 \sum_{\rho\in DS_{\frac{nd}{l},\frac{t}{l},\mu_l}}
\tr(M_{L_l}^{G_{nd}}(s_0,0)\, \rho^l(0,f)),$$ where
$DS_{k,\frac{t}{l},\mu_l}$ is the set of discrete series $\rho$ of
$G_k(\aa)$  such that $\rho$ is $\mu'$-equivariant for some
character $\mu'$ of $F^*_\infty$ such that $\mu'^l=\mu$ and the norm
of the imaginary part of its infinitesimal character is
$\frac{t}{l}$, and $\rho^l$ is the induced representation
$\rho\times\rho\times...\times\rho$ from $L_l$. In the last formula
we used the multiplicity one theorem for $G_k$, $k|nd$. The
representation $\rho$ being unitary, the representation $\rho^l$ is
irreducible and hence $M(s_0,0)$ act as a scalar on $\rho^l$. As it
is also a unitary operator, the scalar is some complex number
$\lambda_\rho$ of module 1.\\

The analogous definition $I_{disc,t,\mu,G'_{n}}(f')$ is given in [AC] for the groups $G'_n$ and
$f'\in H(G'_n)$. Then the authors show, equation (17.8) page 198, that, whenever $f\lra f'$, one has

\begin{equation}\label{ac}
I_{disc,t,\mu,G_{nd}}(f)=I_{disc,t,\mu,G'_{n}}(f').
\end{equation}

We have an easy lemma.

\begin{lemme}\label{nontransfer}
Let $l|nd$ and $\rho\in DS_\frac{nd}{l}$. Let $f'\in H(G'_n)$ and
$f\in H(G_{nd})$ such that $f\lra f'$. If $l$ does not divide $n$,
or if $\rho$ is not $D$-compatible, then $\tr(M(s_0,0)\,
\rho^l(f))=0$.
\end{lemme}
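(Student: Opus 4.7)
As remarked just before the statement, the unitarity of $\rho$ makes $\rho^l$ irreducible, so $M(s_0,0)$ acts on it as a scalar $\lambda_\rho$ of modulus one. Writing $f=\prod_v f_v$ and decomposing $\rho^l$ as a restricted tensor product over places of the local induced representations $\rho_v^l:=\rho_v\times\cdots\times\rho_v$ ($l$ factors), we obtain
\[
\tr\bigl(M(s_0,0)\rho^l(f)\bigr) \;=\; \lambda_\rho \prod_v \tr(\rho_v^l(f_v)).
\]
It therefore suffices to exhibit a single place at which the local factor vanishes. Since $f_w=f'_w$ outside $V$, such a place must lie in $V$, where $f_v\lra f'_v$.

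\emph{Case $l\nmid n$.} At $v\in V$, the Levi $L_l$ localises to $l$ blocks of size $nd/l=r_v d_v/l$ in $G_{nd,v}$, and this transfers to a Levi of $G'_{n,v}$ iff $d_v\mid nd/l$, equivalently $l\mid r_v$. Using $r_v=nd/d_v$ together with $d=\mathrm{lcm}_{v\in V}(d_v)$ one computes
\[
\gcd_{v\in V}(r_v) \;=\; \frac{nd}{\mathrm{lcm}_{v\in V}(d_v)} \;=\; n,
\]
so $l\nmid n$ produces some $v\in V$ with $l\nmid r_v$, at which $L_l$ does not transfer. The matching property recalled in Section~4.6 (that $f_v\lra f'_v$ implies $\tr\pi(f_v)=0$ for every representation $\pi$ induced from a Levi subgroup of $G_{nd,v}$ which does not transfer) then gives $\tr(\rho_v^l(f_v))=0$.

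\emph{Case $\rho$ not $D$-compatible.} Pick $v\in V$ with $\rho_v$ not $d_v$-compatible. By subsection~\ref{extended}, a product of representations is $d_v$-compatible iff each factor is, hence $\rho_v^l$ is not $d_v$-compatible either; equivalently $\chi_{\rho_v^l}$ vanishes on the set $T$ of regular semisimple $g\in G_{nd,v}$ that correspond to some $g'\in G'_{n,v}$. On the other hand, $f_v\lra f'_v$ forces every orbital integral of $f_v$ to vanish off $T$. The Weyl integration formula writes $\tr(\rho_v^l(f_v))$ as a sum over conjugacy classes of maximal tori of integrals of the product $\chi_{\rho_v^l}(t)\,\Phi(f_v,t)$, whose integrand is identically zero. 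Hence $\tr(\rho_v^l(f_v))=0$.

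\emph{Main obstacle.} Nothing deep: the only non-formal ingredient is the arithmetic identity $\gcd_{v\in V}(r_v)=n$, which neatly identifies the divisibility hypothesis ``$l\mid n$'' with the global transferability of $L_l$ at every $v\in V$; everything else is a routine consequence of the scalar action of $M(s_0,0)$ on the irreducible $\rho^l$ together with the support and matching properties built into $f\lra f'$.
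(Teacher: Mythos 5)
Your proof is correct and follows essentially the same path as the paper's: both cases produce a place $v\in V$ at which the local factor $\tr(\rho_v^l(f_v))$ vanishes, using the scalar action of $M(s_0,0)$ on the irreducible $\rho^l$ and the support/matching constraints on $f_v$. Your observation that $\gcd_{v\in V}(r_v)=n$ is just an equivalent repackaging of the paper's step (the paper argues $l\nmid n\Rightarrow d\nmid nd/l\Rightarrow\exists\, w\in V,\ d_w\nmid nd/l$, which is the same divisibility condition expressed via $d=\mathrm{lcm}_v d_v$ rather than $\gcd_v r_v$).
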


{\bf Proof.} Assume $l$ does not divide $n$. Then $d$ does not
divide $\frac{nd}{l}$. By class field theory the smallest common
multiple of the integers $d_v$ is $d$, so there exists a place $w$
such that $d_w$ does not divide $\frac{nd}{l}$. Then $\rho_w^l$ is
not $d_v$-compatible. The same, if $\rho$ is not $D$-compatible,
there exists a place $w$ such that $\rho_w$ is not $d_v$-compatible
and hence $\rho_w^l$ is not $d_v$-compatible.

In both cases we have then $\tr\rho_w^l(f_w)=0$
and as the operator $M(s_0,0)$ acts as a scalar, the result follows.
\qed

Another lemma:

\begin{lemme}\label{xxx}
Assume the multiplicity one theorem is true for all $G'_l$, $l<n$.
Then

(a)
$$I_{disc,t,\mu,G'_{n}}(f')=\sum_{\rho'\in DS'_{n,t,\mu}}m_{\rho'}\tr\rho'(f')+
\sum_{l|n,\, l\neq n}\frac{1}{l^2}
 \sum_{\rho'\in DS'_{\frac{n}{l},\frac{t}{l},\mu_l}}
\tr(M_{L'_l}^{G'_{n}}(s_0,0)\, \rho'^l(0,f')),$$
with the same notations as for $G_{nd}$ and where $m_{\rho'}$ is the multiplicity of $\rho'$ in the discrete spectrum.

(b) For all $f\lra f'$, one has
\begin{equation}\label{mainequa}
\sum_{\rho\in DS^D_{nd,t,\mu}}\tr\rho(f)+
\sum_{l|n,\, l\neq n}\frac{1}{l^2}\sum_{\rho\in DS^{D}_{\frac{n}{l},\frac{t}{l},\mu_l}}
\tr(M_{L_{l}}^{G_{nd}}(s_0,0)\, \rho^l(0,f))=
\end{equation}
$$
\sum_{\rho'\in DS'_{n,t,\mu}}m_{\rho'}\tr\rho'(f')+\sum_{l|n,\,
l\neq n} \frac{1}{l^2}\sum_{\rho'\in
DS'_{\frac{n}{l},\frac{t}{l},\mu_l}} \tr(M_{L'_l}^{G'_{n}}(s_0,0)\,
\rho'^l(0,f')),$$ where $DS^{D}_{?}$ is by definition the subset of
$D$-compatible representations in $DS_{?}$.
\end{lemme}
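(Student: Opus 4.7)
The strategy is to mirror for $G'_n$ the simplification of the spectral side of the trace formula already carried out above for $G_{nd}$, and then to combine the resulting expression with the Arthur--Clozel identity \ref{ac} and Lemma \ref{nontransfer}.

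For (a), I would start from the standard expression of $I_{disc,t,\mu,G'_n}(f')$ as a weighted sum over Levi subgroups $L' \in \mathcal L(G'_n)$ containing the diagonal subgroup. The reductions performed for $G_{nd}$ go through verbatim: $W(\mathfrak a_{L'})_{\mathrm{reg}}$ is empty unless $L'$ consists of equal-size blocks; the counting of conjugates of each such $L'_l$ together with the identical cardinalities $|W_0^{L'}|$ and $|W_0^{G'_n}|$ collapses the Weyl coefficient to $1/l^2$; and after choosing $s_0=(1,2,\dots,l)$ the operator $M_{L'_l}^{G'_n}(s_0,0)$ permutes without fixed point the summands of $V_{\rho'}$ unless $\rho' = \rho'_0{}^{\otimes l}$ for a single discrete series $\rho'_0$ of $G'_{n/l}$, so only such contributions survive. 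The one new ingredient, relative to $G_{nd}$, is that each $\rho'_0 \in DS'_{n/l}$ a priori contributes with multiplicity $m_{\rho'_0}$ in the discrete spectrum of $G'_{n/l}$. For the terms with $l \neq n$ one has $n/l < n$, so by the inductive hypothesis $m_{\rho'_0} = 1$ and the clean coefficient $1/l^2$ appears; the $l = n$ piece is precisely the main discrete contribution one is trying to control and must be kept with explicit multiplicity $m_{\rho'}$.

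For (b), I would combine (a) with the identity $I_{disc,t,\mu,G_{nd}}(f) = I_{disc,t,\mu,G'_n}(f')$ from [AC] and the explicit formula already established for the $G_{nd}$ side. When $f \lra f'$, Lemma \ref{nontransfer} annihilates every term of the $G_{nd}$ expansion indexed either by an $l$ that divides $nd$ but not $n$, or by an $l \mid n$ together with a non-$D$-compatible $\rho$: in each case some local factor $\tr \rho_w^l(f_w)$ vanishes at a place $w \in V$ (the non-divisibility case via the fact that the least common multiple of the $d_v$ equals $d$, the non-compatibility case directly from the definition, and in both cases the operator $M(s_0,0)$ acts as a scalar so does not save the trace). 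What survives on the $G_{nd}$ side is exactly the left-hand member of \ref{mainequa}; substituting (a) on the $G'_n$ side yields the claim.

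The main obstacle is not the trace-formula manipulation itself, which is essentially a copy of the work already done for $G_{nd}$, but the careful bookkeeping of multiplicities. One must simultaneously keep the $l=n$ term on the $G'_n$ side isolated, so that the a priori unknown $m_{\rho'}$ do not contaminate the induction, and on the $G_{nd}$ side apply Lemma \ref{nontransfer} with the right dichotomy ($l \nmid n$ versus $\rho$ not $D$-compatible) so that the residual sum matches the $G'_n$ side term by term. Once this alignment is set up, the equality \ref{mainequa} is forced by \ref{ac}.
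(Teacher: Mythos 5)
Your proof is correct and follows exactly the paper's approach: part (a) reproduces for $G'_n$ the Levi-subgroup reduction carried out above for $G_{nd}$, with the inductive multiplicity-one hypothesis on the proper Levi factors supplying the clean $1/l^2$ coefficients while the top term retains the unknown multiplicity $m_{\rho'}$, and part (b) combines (a) with the Arthur--Clozel identity \ref{ac} and Lemma \ref{nontransfer} to prune the $G_{nd}$ expansion down to the $D$-compatible, transferring terms. The paper's own proof is a two-line pointer to these same steps, so your write-up simply supplies the bookkeeping the authors leave implicit.
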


{\bf Proof.} (a) The proof is similar to the case $G_{nd}$.

(b) We used (a) and the equality \ref{ac}. But the $G_{nd}$ side has
been modified thanks to the lemma \ref{nontransfer}. The lemma
\ref{nontransfer} allows also the replacement of $DS_{?}$ by
$DS^{D}_{?}$.\qed
\ \\

Let us prove the theorem by induction on $n$. So we will use the
formula \ref{mainequa} among all. Let us point out, to not recall it
all the time, that the correspondence $\g$, once assumed or proven,
preserves the quantities $t$ and $\mu$.

First assume $n=1$. Then we get from the relation \ref{mainequa}:

\begin{equation}\label{cas1}
\sum_{\rho\in DS^D_{d,t,\mu}}\tr\rho(f)=\sum_{\rho'\in DS'_{1,t,\mu}}m_{\rho'}\tr\rho'(f').
\end{equation}
for all $f\lra f'$, where $m_{\rho'}$ is the multiplicity of $\rho'$ in the discrete spectrum.

\def\x{{\s'}}
Let us fix a representation $\x\in DS'_1$. Then we have $\x\in
DS'_{1,t,\mu}$ for some $t$ and $\mu$. We will show there exists
$\s\in  DS^D_{d,t,\mu}$ such that $|\lj|_v(\s_v)=\x_v$ for all $v\in
V$ and $\s_v=\x_v$ for all $v\notin V$, and also that $m_{\x}=1$.
Let $S$ be a finite set of places of $F$ containing all the places
in $V$, all the infinite places and all the places $v$ such that
$\x_v$ is not a spherical representation. For any $\pi\in
DS^D_{d,t,\mu}$ or $\pi\in DS'_{1,t,\mu}$ write $\pi_S$ for the
tensor product $\otimes_{v\in S}\pi_v$ and $\pi^S$ for the
restricted tensor product $\otimes_{v\notin S}\pi_v$. Let
$DS^D_{d,t,\mu,\x}$ (resp. $DS'_{1,t,\mu,\x}$) be the set of $\pi\in
DS^D_{d,t,\mu}$ (resp. $\pi\in DS'_{1,t,\mu}$) such that
$\pi^S=\x^S$. Then we have for all $f\lra f'$:
$$\sum_{\rho\in DS^D_{d,t,\mu,\x}}\tr\rho(f)=\sum_{\rho'\in DS'_{1,t,\mu,\x}}m_{\rho'}\tr\rho'(f').$$
This statement is inferred from the equation \ref{cas1} by a standard argument one may find
well expounded in [Fl2].
According to the strong multiplicity one theorem applied to $G_d$, the cardinality of
$DS^D_{d,t,\mu,\x}$ is either zero or 1. The cardinality of $DS'_{1,t,\mu,\x}$ is finite by [BB].
As $f_v=f'_v$ for $v\notin S$, we may simplify this equality with $\prod_{v\notin S}\tr \x_v(f'_v)$, choosing
$f'_v$ such that this product is not zero.
We get

$$\sum_{\rho\in DS^D_{d,t,\mu,\x}}\prod_{v\in S}\tr\rho_v(f_v)=
\sum_{\rho'\in DS'_{1,t,\mu,\x}}m_{\rho'}\prod_{v\in S}\tr\rho'_v(f'_v)$$
for functions such that $f_v\lra f'_v$ for all $v\in V$ and $f_v= f'_v$ for $v\in S\bc V$.
On the right side we have a finite
 non empty set (containing at least
$\x$) of distinct characters on a finite product of groups.
The linear independence of characters on these groups implies the linear independence of characters
on the product, and so there exist functions $f'_v\in H'(G'_{1,v})$ for $v\in S$, supported
on the set of regular semisimple elements, such that the right side of the equality does not vanish on
$(f'_v)_{v\in S}$. Then $DS^D_{d,t,\mu,\x}$ is not empty and hence contains one element. Let us call this element $\s$.
As $\s$ is $D$-compatible, for every $v\in V$ we have that
$|\lj|_v(\s_v)$ is an irreducible unitary representation $u'_v$ of $G'_{1,v}$ such that
$\tr(\s_v(f_v))=\tr(u'_v(f'_v))$ for all $f_v\lra f'_v$. So by linear independence of characters on the group
$\times_{v\in S}G'_{1,v}$ we have to have $u'_v=\x_v$ for all $v\in V$ and $\s_v=\s'_v$ for all
$v\in S\bc V$.
This obviously implies $m_\x=1$ which is (b).
Now $\g(\x)$ is defined. To show the surjectivity of $\g$ onto
 $DS^D_{d}$ one starts with $\s\in DS^D_{d,t,\mu}$ and set $S$ to
be a finite set of places containing all the places in $V$, all the
infinite places and all the places $v$ such that $\s_v$ is not
spherical. Then the proof is the same. Now we have $\x_v=\g(\x)_v$
for all $v\notin V$. The strong multiplicity one theorem for $G'_d$
implies then both the fact that the map $\g$ is injective
(completing the proof of (a)) and the strong multiplicity one
theorem for $G'_1$ ((c)). The point (d) is obtain now by transfer
under $\g^{-1}$ and the proposition \ref{stable} (b).

We finished the proof of the theorem for $n=1$.\\

Let us now assume the theorem has been proven for all $k<n$ and call $\g_k$ the transfer map at level $k$.
This hypothesis enables us to apply the lemma \ref{xxx} and to quote the relation (\ref{mainequa}):

\begin{equation}
\sum_{\rho\in DS^D_{nd,t,\mu}}\tr\rho(f)+
\sum_{l|n,\, l\neq n}\frac{1}{l^2}\sum_{\rho\in DS^{D}_{\frac{n}{l},\frac{t}{l},\mu_l}}
\tr(M_{L_{l}}^{G_{nd}}(s_0,0)\, \rho^l(0,f))=
\end{equation}
$$
\sum_{\rho'\in DS'_{n,t,\mu}}m_{\rho'}\tr\rho'(f')+\sum_{l|n,\, l\neq n}
\frac{1}{l^2}\sum_{\rho'\in DS'_{\frac{n}{l},\frac{t}{l},\mu_l}}
\tr(M_{L'_l}^{G'_{n}}(s_0,0)\, \rho'^l(0,f')).$$

Moreover, using the point (d) of the theorem for ${\bf G}_k$, $k<n$,
the induction hypothesis implies that the representations $\rho'^l$
are irreducible (proposition  \ref{unit} (b)). So
$M_{L'_l}^{G'_{n}}(s_0,0)$ is again a scalar and as it is unitary
the scalar is a complex number $\lambda_{\rho'}$ of module 1. So the
equation is actually, using again the induction to transfer the
representations in $DS^{D}_{\frac{n}{l},\frac{t}{l},\mu_l}$:

\begin{equation}
\sum_{\rho\in DS^D_{nd,t,\mu}}\tr\rho(f)+
\sum_{l|n,\, l\neq n}\frac{1}{l^2}\sum_{\rho\in DS^{D}_{\frac{n}{l},\frac{t}{l},\mu_l}}
\lambda_{\rho}\tr(\rho^l(0,f))=
\end{equation}
$$
\sum_{\rho'\in DS'_{n,t,\mu}}m_{\rho'}\tr\rho'(f')+\sum_{l|n,\,
l\neq n} \frac{1}{l^2}\sum_{\rho\in
DS^D_{\frac{n}{l},\frac{t}{l},\mu_l}}
\lambda_{\g^{-1}_{\frac{n}{l}}(\rho)}\tr(\rho^l(0,f))$$ for $f\lra
f'$.

Now the proof goes as for the case $n=1$ with a minor modification
in the end. Chose a representation $\x\in DS'_{n,t,\mu}$. Fix a
finite set $S$  of places of $F$ which contains all the places in
$V$, all the infinite places and all the places $v$ for which $\x_v$
is not spherical. By the theorem of multiplicity one for $G_{nd}$
the set $A$ of $\s\in DS^D_{n,t,\mu}$ such that $\s^V=\x^V$ is empty
or contains only one element. If we apply the proposition
\ref{multone} to the representations $\rho^l$ and all the places out
of $S$, then we conclude that the set $B$ of representation
$\gamma=\rho^l$ (where $l|n$ and $l<n$) such that $\gamma^V=\x^V$ is
empty or contains one element.  Let $DS'_{n,t,\mu,\x}$ be the set of
$\tau'\in DS'_{n,t,\mu}$ such that $\tau'^S=\s'^S$. Then
$DS'_{n,t,\mu,\x}$ is not empty (contains $\s'$) and finite ([Ba3];
we do not quote [BB] again since the representations may not be
cuspidal).

By the same argument in [Fl2] that we quoted for the case $n=1$ we
obtain then

$$\sum_{\s\in A}\prod_{v\in S}\tr\s_v(f_v)+
\sum_{\gamma\in
B}\frac{\lambda_\gamma-\lambda_{\g^{-1}_{\frac{n}{l}}(\gamma)}}{l^2}
\prod_{v\in S}\tr\gamma_v(f_v)= \sum_{\rho'\in
DS'_{n,t,\mu,\x}}m_{\rho'}\prod_{v\in S}\tr\rho'_v(f'_v)$$ if
$f_v\lra f'_v$ for all $v\in V$ and $f_v\lra f'_v$ for all $v\in
S\bc V$.

If $A$ is not empty and $\s$ is the unique element of $A$, then the
local components of $\s$ are unitary and we can transfer them. If
$B$ is not empty and $\gamma$ is the unique element of $B$, then the
local components of $\gamma$ are unitary and we can transfer them.
In any possible case we do so. But {\it the coefficient
$\frac{\lambda_\gamma-\lambda_{\g^{-1}_{\frac{n}{l}}(\gamma)}}{l^2}$
cannot be an integer because its module is less than $\frac{1}{2}$}.
So the linear independence of characters on the group $\times_{v\in
S}G'_{v}$ implies that $B$ was empty, $A$ was not empty, on the
right side there is only $\s$ and $m_\s=1$. The rest of the proof is
exactly like for $n=1$.\qed

\begin{cor}
The intertwining operators $M_{L_l}^{G_{nd}}(s_0,0)$ and
$M_{L'_l}^{G'_{n}}(s_0,0)$ are given by the same scalar. In
particular, the computations in [KS] transfer to $G'_n(\aa)$.
\end{cor}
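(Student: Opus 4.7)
Since $\rho$ is a unitary discrete series, $\rho^l$ is irreducible by Bernstein's theorem, so $M_{L_l}^{G_{nd}}(s_0,0)$ acts on it by a scalar $\lambda_\rho$ of modulus one (the operator being unitary). Likewise $\rho'^l={\bf G}^{-1}(\rho)^l$ is irreducible by Proposition \ref{unit}(b), whose hypothesis is furnished by Theorem \ref{correspondence}(d) applied at level $n/l<n$; thus $M_{L'_l}^{G'_n}(s_0,0)$ acts by a unitary scalar $\lambda_{\rho'}$. So both operators are indeed scalars, and what must be shown is $\lambda_\rho=\lambda_{\rho'}$.

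Once Theorem \ref{correspondence} is in hand, the identity (\ref{mainequa}) simplifies: its discrete-series sums coincide on both sides through the bijection ${\bf G}$ together with the character identity $\tr\rho(f)=\tr{\bf G}^{-1}(\rho)(f')$ valid for $\rho\in DS^D_{nd}$ and $f\lra f'$. Subtracting them leaves
\begin{equation*}
\sum_{l\mid n,\,1<l<n}\frac{1}{l^2}\sum_{\rho\in DS^D_{n/l,t/l,\mu_l}}\bigl(\lambda_\rho-\lambda_{{\bf G}^{-1}(\rho)}\bigr)\tr\rho^l(f)=0
\end{equation*}
for every pair $f\lra f'$, so the corollary reduces to the vanishing of each coefficient $\lambda_\rho-\lambda_{{\bf G}^{-1}(\rho)}$.

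To extract this vanishing, fix $\rho_0\in DS^D_{n/l_0,t_0,\mu_0}$ and apply the finite-set-of-places isolation trick used in the proof of Theorem \ref{correspondence}. Choose $S\supset V$ containing all infinite places and all $v$ where $\rho_0$ is non-spherical, and restrict attention to test functions whose unramified data outside $S$ agree with those of $\rho_0^{l_0}$. Strong multiplicity one for $G_{nd}$ combined with the Moeglin--Waldspurger classification cut the sum down to finitely many pairs $(l,\rho)$, and linear independence of characters on $\prod_{v\in S}G'_v$ after the transfer $f_v\lra f'_v$ forces each surviving coefficient to equal a non-negative integer -- a discrete-series multiplicity on the right side -- which must be zero, since no discrete series of $G'_n$ can share unramified data at almost all places with a non-discrete $\rho^l$. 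But $|\lambda_{\rho_0}-\lambda_{{\bf G}^{-1}(\rho_0)}|/l_0^2\le 2/l_0^2\le 1/2$, so the coefficient is indeed zero, giving $\lambda_{\rho_0}=\lambda_{{\bf G}^{-1}(\rho_0)}$, as desired.

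The main obstacle is the linear-independence step: verifying that the identity can be split cleanly by unramified data outside $S$, so that the ``finitely many surviving pairs'' conclusion holds, and tracking compatibility of signs in the transferred character identity between $\tr\rho^l(f)$ and $\tr{\bf G}^{-1}(\rho)^l(f')$. Both reduce to already-established results (strong multiplicity one for $G_{nd}$, Theorem \ref{correspondence}, and the local character relations), but they require attentive bookkeeping.
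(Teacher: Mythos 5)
Your argument is correct and is essentially the same as the paper's, whose stated proof is only a one-line pointer back to the end of the proof of Theorem \ref{correspondence}: once the theorem is established, plug the bijection $\g$, the multiplicity-one statement $m_{\rho'}=1$, and the character identities into (\ref{mainequa}); the discrete-series sums cancel, leaving $\sum_l l^{-2}\sum_{\rho}(\lambda_\rho-\lambda_{\g^{-1}(\rho)})\tr\rho^l(f)=0$; isolate a single $\rho_0^{l_0}$ with the Flath-type argument and Proposition \ref{multone} to kill the coefficient. You have unpacked the paper's pointer faithfully.

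One remark on the internal logic of your last step. After the subtraction, the right-hand side of the residual identity is \emph{literally zero}. So once the isolation reduces the left-hand side to a single surviving character $\prod_{v\in S}\tr\gamma_{0,v}(f_v)$ (unique by Proposition \ref{multone}), its coefficient $\frac{\lambda_{\rho_0}-\lambda_{\g^{-1}(\rho_0)}}{l_0^2}$ must vanish outright; one simply chooses test functions making the product nonzero. Your intermediate observations that ``each surviving coefficient equals a discrete-series multiplicity on the right side'' and that the modulus bound $\le 1/2$ forces it to be zero are both harmless but redundant here; they are relics of the argument \emph{inside} the proof of Theorem \ref{correspondence}, where the right-hand side still carried the unknown integers $m_{\rho'}$ and one genuinely needed the bound $<1$ to exclude an integer match. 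Once those terms have been subtracted away, there is no multiplicity to compare against.
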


{\bf Proof.} This is the consequence of
$\lambda_\gamma-\lambda_{\g^{-1}_{\frac{n}{l}}(\gamma)}=0$ implied
by the end of the proof of the theorem.\qed

\subsection{A classification for discrete series and automorphic representations of $G'_n$}

If $L=\times_{i=1}^kG'_{n_i}$ is a standard Levi subgroup of $G'_n$,
we call {\bf essentially square integrable} (resp. {\bf cuspidal})
representation of $L$ a representation
$\pi'=\otimes_{i=1}^k\nu^{a_i}\rho'_i$ where, for each $i$,
$\rho'_i$ is a discrete series  (resp. cuspidal representation) of
$G'_{n_i}$ and $a_i$ is a real number. The representation $\pi'$ is
the said to be $D$-{\bf compatible} if all the $\rho_i$ are
$D$-compatible.

\begin{prop}\label{cuspidal}
Let $\rho\in DS_m$ be a cuspidal representation. Let $s_{\rho,D}$ be
the smallest common multiple of $s_{\rho_v,d_v}$, $v\in V$ (cf. section  \ref{generic}).
Then

(a) $MW(\rho,k)$ is $D$-compatible if and only if $s_{\rho,D}|k$.

(b) ${\g}^{-1}(MW(\rho,s_{\rho,D}))=\rho'\in DS'_{\frac{ms_{\rho,D}}{d}}$
  is cuspidal (in particular $\g^{-1}$ sends cuspidal to cuspidal).
\end{prop}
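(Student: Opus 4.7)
The plan is to prove (a) by a direct local translation, and to prove (b) by contradiction, leveraging the global correspondence $\g$ of Theorem \ref{correspondence}, the strong multiplicity one statement available inductively, and an induction on $n$.

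For part (a), the key input is that $\rho$ cuspidal forces each local component $\rho_v$ to be generic unitary (Section 4.4), hence of the shape $\rho_v = \prod_i \nu^{e_i}\s_i$ with $\s_i \in \d_{?}^u$ square integrable and $|e_i|<1/2$. By the definition of the Moeglin--Waldspurger construction, $(MW(\rho,k))_v$ is the Langlands quotient $Lg(\rho_v,k)$ discussed in Section \ref{generic}. That subsection characterizes $d_v$-compatibility of $Lg(\rho_v,k)$ by the divisibility $s_{\rho_v,d_v}\mid k$. Since $D$-compatibility is by definition the conjunction over $v\in V$ of $d_v$-compatibility, and $s_{\rho,D}$ was defined as the least common multiple of the local integers $s_{\rho_v,d_v}$, the equivalence in (a) follows immediately.

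For part (b), once (a) is known, Theorem \ref{correspondence}(a) ensures that $\rho':=\g^{-1}(MW(\rho,s_{\rho,D}))$ is a well-defined element of $DS'_{ms_{\rho,D}/d}$. I would argue cuspidality by contradiction: if $\rho'$ were not cuspidal, then as a discrete series it would be residual, hence an irreducible constituent of some induced representation
\[
\Ind_{L'(\aa)}^{G'_n(\aa)}\Bigl(\bigotimes_j \nu^{a_j}\tau'_j\Bigr),
\]
where $L'=\prod_j G'_{n_j}$ is a proper standard Levi and the $\tau'_j$ are cuspidal automorphic on the factors with a suitable positive twist by $\nu^{a_j}$. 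At every $v\notin V$ we have both $(\rho')_v = Lg(\rho_v,s_{\rho,D})$ and that $(\rho')_v$ is an irreducible subquotient of the local parabolic induction of $\bigotimes_j \nu^{a_j}(\tau'_j)_v$. Comparing essentially square integrable supports by the uniqueness statement of Section 2.1, together with the explicit Moeglin--Waldspurger description of the esi-support of $Lg(\rho_v,s_{\rho,D})$, would force each $\tau'_j$ (modulo permutation and matching of factors) to be a discrete series whose unramified local components are themselves of the shape $Lg(\rho_v,s')$ for a proper divisor $s'\mid s_{\rho,D}$, with the $a_j$ producing exactly the shift needed to recover the Langlands data of $Lg(\rho_v,s_{\rho,D})$. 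Invoking the induction hypothesis of Theorem \ref{correspondence} on the smaller groups $G'_{n_j}(\aa)$, this would exhibit each $\tau'_j$ as $\g^{-1}(MW(\rho,s'))$ for some $s'<s_{\rho,D}$, showing in particular that $MW(\rho,s')$ is already $D$-compatible, in direct contradiction to the minimality of $s_{\rho,D}$ coming from part (a).

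The main obstacle is the middle step: extracting the global cuspidal datum $(L',\{\tau'_j\},\{a_j\})$ of a hypothetical residual realization of $\rho'$ from the purely local-unramified information $(\rho')_v = Lg(\rho_v,s_{\rho,D})$ for $v\notin V$. To do this rigorously one should combine the strong multiplicity one theorem for $G'_{n_j}(\aa)$ with $n_j<n$ (available inductively from Theorem \ref{correspondence}(c)), which reconstructs each $\tau'_j$ from its almost-everywhere unramified spherical components, with a uniqueness-of-cuspidal-support argument in the spirit of Proposition \ref{multone} applied to the induced representation $\prod_j \nu^{a_j}\tau'_j$. This reduces the problem to a comparison of two Moeglin--Waldspurger decompositions of the same local representation $Lg(\rho_v,s_{\rho,D})$, where the divisor $s'\mid s_{\rho,D}$ produced by the non-cuspidality hypothesis directly contradicts the minimality in the definition of $s_{\rho,D}$ established in part (a).
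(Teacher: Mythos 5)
Your proof of (a) is correct and is exactly the paper's argument: $D$-compatibility of $MW(\rho,k)_v = Lg(\rho_v,k)$ is characterized in Section \ref{generic} by $s_{\rho_v,d_v}\mid k$, and taking the conjunction over $v\in V$ gives $s_{\rho,D}\mid k$.

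For (b) your overall strategy matches the paper's: assume $\rho'$ is not cuspidal, obtain a cuspidal datum $\tau'=\bigotimes_j\nu^{a_j}\tau'_j$ on a proper standard Levi $L'$, and derive a contradiction to the minimality of $s_{\rho,D}$ from part (a). The difference is how the middle step — identifying the $\tau'_j$ in terms of $\rho$ — is carried out. You propose to stay on the $G'$-side and work locally at unramified places, comparing esi-supports of $Lg(\rho_v,s_{\rho,D})$ and then reconstructing the global data via the strong multiplicity one theorem for $G'_{n_j}$. This is workable, but it is more circuitous than the paper's route. The paper instead applies $\g$ immediately to the \emph{whole} cuspidal datum, obtaining $\tau=\g(\tau')=\bigotimes_j\nu^{a_j}\g(\tau'_j)$, an essentially square integrable, $D$-compatible representation of the corresponding proper Levi $L$ of $G_{nd}$; since the constituent of $\Ind^{G_{nd}}_L\tau$ matching $\rho'$ at unramified places agrees almost everywhere with $MW(\rho,s_{\rho,D})$, theorem 4.4 of [JS] for the split group gives directly that $\tau$ and $MW(\rho,s_{\rho,D})$ have the same cuspidal support. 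From there each $\g(\tau'_j)$ must be a twist of some $MW(\rho,p_j)$ with $\sum_j p_j=s_{\rho,D}$, and $D$-compatibility of each factor combined with (a) forces $s_{\rho,D}\mid p_j$, which is impossible when $L$ is proper. Pushing the whole comparison through $\g$ to the split side and invoking Jacquet--Shalika there avoids both the local-unramified esi-support bookkeeping and the appeal to a separate uniqueness-of-cuspidal-support statement on the $G'$-side (Proposition \ref{classif}(b), which the paper proves later and whose proof itself uses the transfer through $\g$). One small inaccuracy in your phrasing: you speak of the ``induction hypothesis of Theorem \ref{correspondence}''; in the paper, Proposition \ref{cuspidal} sits \emph{after} that theorem is established for all $n$, so no induction is needed — the full map $\g$ and strong multiplicity one are already available.
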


{\bf Proof.} (a) This is an easy consequence of the discussion in section \ref{generic} and the definition
of $s_{\rho,D}$.

(b) Assume $\rho'$ is not cuspidal. Then there exists an essentially
cuspidal representation $\tau'$ of a proper standard Levi subgroup
$L'$ of $G'_n$ such that $\pi'$ is a constituent of the induced
representation to $G'_{\frac{ms_{\rho,D}}{d}}$ from $\tau'$. Set
$\tau=\g(\tau')$. So $\tau$ is a $D$-compatible essentially square
integrable representation of $L(\aa)$ where $L$ is a proper standard
Levi subgroup of $G_{ms_{\rho,D}}$ corresponding to $L'$. By the
theorem 4.4 of [JS], $\tau$ has the same cuspidal support as
$MW(\rho,s_{\rho,D})$. As it is a $D$-compatible essentially square
integrable representation and lives on a smaller subgroup, this
contradicts the minimality of $s_{\rho,D}$.\qed

\begin{rem}
It will be proved in the Appendix that all the cuspidal
representations of $G'_n(\aa)$ are obtained like in the proposition
\ref{cuspidal}. But at this point this proof cannot be made yet, so
for now we will call these representations {\rm basic cuspidal}.
After, using the next proposition, Grbac will prove in the Appendix
that basic cuspidal and cuspidal is the same thing, the reader may
drop the word "basic" in the next proposition and have a clean
classification.
\end{rem}

Let us call {\bf basic cuspidal} a cuspidal representation obtained
like the $\rho'={\g^{-1}}(MW(\rho,s_{\rho,D}))$ in the point (b) of
the proposition . We then set $s(\rho')=s_{\rho,D}$ and
$\nu_{\rho'}=\nu^{s_{\rho,D}}$. If $L=\times_{i=1}^kG'_{n_i}$ is a
standard Levi subgroup of $G'_n$, we call {\bf basic essentially
cuspidal} representation of $L$ a representation
$\otimes_{i=1}^k\nu^{a_i}\rho'_i$ where, for each $i$, $\rho'_i$ is
a basic cuspidal representation of $G'_{n_i}$ and $a_i$ is a real
number.

We now give a classification of discrete series of groups $G'_n$.
The point (a) generalizes [MW2] and the point (b) generalizes the
thorem 4.4 in [JS].

\begin{prop}\label{classif}
(a) Let $\rho'\in DS'_m$ be a basic cuspidal representation. Let
$k\in \n^*$. The induced representation
$\prod_{i=0}^{k-1}(\nu_{\rho'}^{\frac{k-1}{2}-i}\rho')$ has a unique
constituent $\pi'$ which is a discrete series. We write then
$\pi'=MW'(\rho',k)$. Every discrete series $\pi'$ of a group $G'_n$,
$n\in\n^*$, is of this type, and $k$ and $\rho'$ are determined by
$\pi'$. The discrete series $\pi'$ is basic cuspidal if and only if
$k=1$. If $\pi'=MW'(\rho',k)$, then
${\g}(\rho')=MW(\rho,s_{\rho,D})$ if and only if
${\g}(\pi')=MW(\rho,ks_{\rho,D})$.

(b) Let $(L_i,\rho'_i)$, $i=1,2$, be such that $L_i$ is a standard
Levi subgroup of $G'_n$ and $\rho'_i$ is a basic essentially
cuspidal representation of $L_i(\aa)$ for $i=1,2$. Fix any finite
set of places $V'$ containing the infinite places and all the finite
places where $\rho'_1$ or $\rho'_2$ is not spherical. If, for all
places $v\notin V'$, the spherical subquotients of the induced
representations from $\rho'_{i,v}$ to $G'_n$ are equal, then the
couples $(L_i,\rho'_i)$ are conjugated.

(c) If $\pi'$ is an automorphic representation of $G'_n$, then there
exists a couple $(L,\rho')$ where $L$ is a  standard Levi subgroup
of $G'_n$ and $\rho'$ is a basic essentially cuspidal representation
of $L(\aa)$ such that $\pi'$ is a constituent of the induced
representation from $\rho'$ to $G'_n(\aa)$. The couple $(L,\rho')$
is unique up to conjugation.
\end{prop}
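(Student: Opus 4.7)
The overall strategy is to reduce each of (a), (b), (c) to its known counterpart for $G_{nd}(\aa)$, using the global Jacquet--Langlands correspondence $\mathbf{G}$ of Theorem~\ref{correspondence} as a dictionary.

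For part (a), a basic cuspidal $\rho'$ is by construction $\rho' = \mathbf{G}^{-1}(MW(\rho,s_{\rho,D}))$ for some cuspidal $\rho \in DS_m$. For $k \in \n^*$, Proposition~\ref{cuspidal}(a) gives that $MW(\rho,ks_{\rho,D})$ is $D$-compatible, so the plan is to set $\pi' := \mathbf{G}^{-1}(MW(\rho,ks_{\rho,D})) \in DS'_n$ and to verify that $\pi'$ is a discrete constituent of $\prod_{i=0}^{k-1}\nu_{\rho'}^{(k-1)/2-i}\rho'$. Locally at $v \notin V$ the groups are identified and the subquotient relation reduces, using $Lg(\rho_v,k') = \prod_j \nu^{e_j} u(\sigma_j,k')$ from Section~\ref{generic}, to the statement that $u(\sigma,kl)$ is the Langlands quotient of the standard module $\prod_{i=0}^{k-1}\nu^{l((k-1)/2-i)} u(\sigma,l)$ (immediate by transitivity of induction and functoriality of Langlands quotients). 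At $v \in V$ the analogous statement for $\bar u$-representations furnished by formulas~\ref{equa1}--\ref{equa2}, combined with Proposition~\ref{transfer} and formula~\ref{beta}, gives the subquotient relation after transferring by $|\lj|_v$. Uniqueness follows because any other discrete constituent $\pi''$ yields, via induction in stages, a $\mathbf{G}(\pi'')$ sharing the cuspidal support $\{\nu^{(ks_{\rho,D}-1)/2-j}\rho\}_j$ of $MW(\rho,ks_{\rho,D})$; by [MW2] this forces $\mathbf{G}(\pi'')=MW(\rho,ks_{\rho,D})$, hence $\pi''=\pi'$. Conversely, for any $\pi' \in DS'_n$, $\mathbf{G}(\pi')$ is a $D$-compatible discrete series, hence $MW(\rho,k')$ by [MW2] with $s_{\rho,D} \mid k'$ by Proposition~\ref{cuspidal}(a); writing $k'=ks_{\rho,D}$ recovers $\pi' = MW'(\rho',k)$. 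The characterisation "basic cuspidal iff $k=1$" is then a direct check from the definitions.

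For part (b), I would write $\rho'_i = \otimes_j \nu^{a_{i,j}} \tau'_{i,j}$ with $\tau'_{i,j}$ basic cuspidal, say $\mathbf{G}(\tau'_{i,j}) = MW(\tau_{i,j}, s_{\tau_{i,j},D})$, and transfer the data to $G_{nd}(\aa)$ by setting $\hat{\rho}_i := \otimes_j \nu^{a_{i,j}} MW(\tau_{i,j}, s_{\tau_{i,j},D})$ on the Levi $\widetilde{L}_i \subset G_{nd}$ corresponding to $L_i$. At $v \notin V'$ the local inductions from $\rho'_{i,v}$ and $\hat{\rho}_{i,v}$ coincide, so their spherical subquotients (which at an unramified place determine the Satake parameter, hence the full cuspidal support of the induction) agree. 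Proposition~\ref{multone}, extended from genuinely discrete to $\nu^a$-twisted discrete series by the same combinatorial argument (the essential exponents being recovered from central characters), then yields equality of the multisets $\{\nu^{a_{1,j}} MW(\tau_{1,j}, s_{\tau_{1,j},D})\}_j$ and $\{\nu^{a_{2,j}} MW(\tau_{2,j}, s_{\tau_{2,j},D})\}_j$. Injectivity of $\mathbf{G}$ transfers this equality back to the $\tau'_{i,j}$ and forces $(L_1, \rho'_1)$ and $(L_2, \rho'_2)$ to be conjugate.

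For part (c), existence follows by combining [La] with (a): [La] realises $\pi'$ as a constituent of $\Ind_L^{G'_n(\aa)}(\otimes \nu^{a_i} \sigma'_i)$ with each $\sigma'_i$ unitary cuspidal, hence a discrete series; by (a) each $\sigma'_i = MW'(\tau'_i, k_i)$ is itself a constituent of $\Ind_{L_i^-}^{G'_{n_i}(\aa)}(\otimes_j \nu_{\tau'_i}^{(k_i-1)/2-j} \tau'_i)$, and induction in stages then exhibits $\pi'$ as a constituent of an induction from a basic essentially cuspidal representation. Uniqueness follows from (b) applied at almost every unramified place, where $\pi'_v$ coincides with the unique spherical subquotient of both inductions. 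The main obstacle I expect lies in the local verification at $v \in V$ inside part (a): one must propagate the $s_{\rho,D}$-compatibility through Proposition~\ref{transfer} and formula~\ref{beta}, track the block-of-size-$s_{\rho,D}$ grouping of Zelevinsky segments, and match the resulting products of $u'(\sigma',k)$-type factors against a discrete-series constituent of the twisted product of $\rho'$'s. Everything else is structural manipulation once this local matching is pinned down.
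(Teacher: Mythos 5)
Your proposal follows essentially the same route as the paper: transfer everything through $\mathbf{G}$ to $G_{nd}$, invoke the Moeglin--Waldspurger classification and Jacquet--Shalika's theorem 4.4 there, and handle the local verification at $v\in V$ by matching esi-supports of $|\lj|_v(u(\sigma_j,\cdot))$ against twisted products of $\bar u$-type factors, splitting into the cases of Proposition~\ref{transfer} exactly as you anticipate. The only small divergence is your uniqueness argument in (a): you route through the cuspidal support of $\mathbf{G}(\pi'')$ and [MW2], whereas the paper invokes strong multiplicity one for $G'_n$ (Theorem~\ref{correspondence}(c)) directly — these are equivalent given the bijectivity of $\mathbf{G}$, but the latter is more immediate and avoids having to argue that $\mathbf{G}(\pi'')$ inherits the expected cuspidal support before applying [MW2].
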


{\bf Proof.} (a) Let $\g(\rho')=MW(\rho,s_{\rho,D})$. The discrete series $MW(\rho,ks_{\rho,D})$ is
$D$-compatible (proposition  \ref{cuspidal} (a)).
We will show directly that $\g^{-1}(MW(\rho,ks_{\rho,D})$
is a constituent of $\prod_{i=0}^{k-1}(\nu_{\rho'}^{\frac{k-1}{2}-i}\rho')$.

It is enough to show that, for every place $v\in V$,
$|\lj|_v(MW(\rho,ks_{\rho,D})_v)$ is a subquotient of the local
representation
$\prod_{i=0}^{k-1}(\nu_{\rho'}^{\frac{k-1}{2}-i}\rho'_v)$. By
proposition  \ref{reduc}, it is enough to show that the esi-support
of $|\lj|_v(MW(\rho,ks_{\rho,D})_v)$ is the reunion of the
esi-supports of representations
$\nu_{\rho'}^{\frac{k-1}{2}-i}\rho'_v$. As in the section
\ref{generic}, we may write the generic representation $\rho_v$ as a
product of essentially square integrable representations
$\prod_{j=1}^m\nu^{e_j}\s_j$ and we have seen then that
$$\rho'_v=|\lj|_v(Lg(\rho_v,s_{\rho,D}))=\prod_{j=1}^m\nu^{e_j}|\lj|_v(u(\s_j,s_{\rho,D}))$$
and
$$|\lj|_v(Lg(\rho_v,ks_{\rho,D}))=\prod_{j=1}^m\nu^{e_j}|\lj|_v(u(\s_j,ks_{\rho,D})).$$
Fix an index $j$. If $\s_j$ transfers to $\s'_j$ (case (a) of the proposition  \ref{main}),
we know that $|\lj|_v(u(\s_j,s_{\rho,D}))={\bar u}(\s'_j,s_{\rho,D}))$ and
$|\lj|_v(u(\s_j,ks_{\rho,D}))={\bar u}(\s'_j,ks_{\rho,D})$. One may easily verify that the esi-support of
${\bar u}(\s'_j,ks_{\rho,D})$ is the reunion of the esi-supports of
$\nu^{(\frac{k-1}{2}-i)s_{\rho,D}}{\bar u}(\s'_j,s_{\rho,D})$
for $i\in\{1,...,k\}$. If $\s_j$ does not transfer (case (b) of the proposition  \ref{main}),
one has to use the formula \ref{product} in the section \ref{generic} involving $\s'_{j+}$ and $\s'_{j-}$,
but then the proof
goes exactly the same as for the case when $\s_j$ transfers.

So $\prod_{i=0}^{k-1}(\nu_{\rho'}^{\frac{k-1}{2}-i}\rho')$ has a constituent $\pi'$ which is a discrete series.
The strong
multiplicity one theorem for discrete series
of $G'_n$ (proposition  \ref{correspondence} (c)) implies this induced representation has no other constituent
which is a discrete series.

Let $\pi'\in DS'_n$ be a  discrete series and let us show it is
obtained like this. Set $\g(\pi')=MW(\rho,p)$. We have
$s_{\rho,D}|p$ since $MW(\rho,p)$ is $D$-compatible (proposition
\ref{cuspidal} (a)). So, if we set
$\rho'=\f^{-1}(MW(\rho,s_{\rho,D}))$, $\rho'$ is a basic cuspidal,
and we have $\pi'=MW'(\rho',\frac{p}{s_{\rho,D}})$. The strong
multiplicity one theorem for $G_{nd}$ implies $p$ and $\rho$ are
determined by $\pi'$, so $k=\frac{p}{s_{\rho,D}}$ and $\rho'$ are
determined by $\pi'$. It is clear that $\pi'$ is basic cuspidal if
and only if $p=s_{\rho,D}$, if
and only if $k=1$.\\

(b) $\g(\rho'_1)=\rho_1$ is a tensor product of the form $\otimes_{i=1}^{p_1}\nu^{\a_i}MW(\xi_i,s_{\xi_i,D})$
and $\g(\rho'_2)=\rho_2$ is a tensor product of the form $\otimes_{j=1}^{p_2}\nu^{\beta_j}MW(\tau_j,s_{\tau_j,D})$,
where $\xi_i$ and $\tau_j$ are cuspidal. As the induced representations to $G_{nd}$
from $\rho_1$ and $\rho_2$ has equal spherical subquotient
at all finite places which are not in $V\cup V'$,
we know that the essentially cuspidal supports of $\rho_1$ and $\rho_2$ are equal (theorem 4.4 in [JS]).
As $\xi_i$ and $\tau_j$ are cuspidal,
it follows from the formulas of $\rho_1$ and $\rho_2$ that the multisets
$\{(\a_i,\xi_i)\}$ and $\{(\beta_j,\tau_j)\}$ are equal and so the tensor products are the same up to permutation.\\

(c) The existence is proven in (a). The unicity in (b).\qed

\subsection{Further comments}

The question whether the transfer of discrete series could be extended to unitary automorphic representations or
not seems natural. Let us extend in an obvious way
the notion of $D$-compatible from discrete series to
 unitary automorphic
representations of $G_{nd}(\aa)$. Let us formulate two questions.\\
\ \\
{\bf Question 1.} Given a unitary automorphic representation $a'$ of
$G'_n(\aa)$, is it possible to find a unitary automorphic
representation $a$ of $G_{nd}(\aa)$ such that $a_v=a'_v$ for all
$v\notin V$ and
$|\lj|_v(a_v)=a'_v$ all $v\in V$?\\
\ \\
{\bf Question 2.} Given a $D$-compatible unitary automorphic
representation $a$ of $G_{nd}(\aa)$, is it possible to find a
unitary automorphic representation $a'$ of $G'_{n}(\aa)$ such that
$a_v=a'_v$ for all $v\notin V$ and $|\lj|_v(a_v)=a'_v$ all $v\in
V$?\\

These questions are independent and the answer is in general ``no'' for both.\\

Consider the first question. Roughly speaking the counterexample
comes from the fact that there exist unitary irreducible
representations of an inner form of $GL_n$ over a local field which
do not correspond to a unitary representation of $GL_n$. The problem
is to realize such a representation as a local component of a
unitary automorphic representation. Here is the construction, based
on the lemma \ref{nonunit}.

Let $dim_FD=16$. Let $G'=GL_3(D)$. Assume there is a finite place
$v_0$ of $F$ such that the local component of $G'(\aa)$ at the place
$v_0$ is $G'_{v_0}\simeq GL_{3}(D_{v_0})$ with
$dim_{F_{v_0}}D_{v_0}=16$. It is possible to chose such a $D$ by
global class field theory. Let $\rho'$ be a cuspidal representation
of $G'(\aa)$ such that $\rho'_{v_0}$ is the Steinberg representation
of $G'_{v_0}$. Then $\g(\rho')$ is cuspidal. Indeed, its local
component at the place $v_0$ has to be the Steinberg representation
of $GL_{12}(F_{v_0})$ (the only unitary irreducible elliptic
representations being the trivial representation and the Steinberg
representation). In particular $s_{\rho'}=1$.

Let $\tau'=MW'(\rho',16)$. Let $St'_3$ be the Steinberg
representation of $GL_3(D_{v_0})$ and $St'_4$ the Steinberg
representation of $GL_4(D_{v_0})$. Then $\tau'_{v_0}=u'(St'_3,16)$.

Let $\tau''$ be the global representation defined by:
$\tau''_v=\tau'_v$ for all $v\neq v_0$ and
$\tau''_{v_0}=\nu^{-\frac{3}{2}}u'(St'_3,4)\times\nu^{-\frac{1}{2}}
u'(St'_4,3)\times\nu^{\frac{1}{2}} u'(St'_4,3)\times
\nu^{\frac{3}{2}}u'(St'_3,4).$ Let us show that $\tau''$ is an
automorphic representation. We have $\tau''_{v_0}<\tau'_{v_0}$ by
the lemma \ref{nonunit} (ii). So $\tau''_{v_0}$ is a subquotient of
$\times_{i=1}^{16} \nu^{\frac{17}{2}-i}St'_3$. So $\tau''$ is a
constituent of $\times_{i=1}^{16} \nu^{\frac{17}{2}-i}\rho'$. As
$\rho'$ is cuspidal, $\tau''$ is automorphic. All the local
components of $\tau''$ are unitary. It is true by definition for
$\tau''_v$, $v\neq v_0$, and by lemma \ref{nonunit} (i) for
$\tau''_{v_0}$. So $\tau''$ is a unitary automorphic representation.
It cannot correspond to a unitary automorphic representation of
$GL_{48}(\aa)$
because by lemma \ref{nonunit} (iii) there is a transfer problem at the place $v_0$.\\

Consider now the second question.  Let $dim_FD=d^2=4$. Let
$G'=GL_3(D)$. Assume there is a finite place $v_0$ of $F$ such that
the local component of $G'(\aa)$ at the place $v_0$ is
$G'_{v_0}\simeq GL_{3}(D_{v_0})$ with $dim_{F_{v_0}}D_{v_0}=4$. For
all $i\in \n^*$, write $St_i$ for the Steinberg representation of
$GL_i(F_{v_0})$ and $St'_i$ for the Steinberg representation of
$GL_i(D_{v_0})$. Let $\rho$ be a cuspidal representation of
$GL_3(\aa)$ such that $\rho_{v_0}=St_3$. Set $\tau=MW(\rho,2)$. We
have $s_{\rho,D}=2$ (since $s_{\rho,D}$ always divides $d$ and here
$d=2$ and $s_{\rho,D}\neq 1$). So $\tau$ is $D$-compatible and
$\tau'=\g^{-1}(\tau)$ is a cuspidal representation. We have
$\tau_{v_0}=u(St_3,2)$. Let $\pi$ be the representation $St_4\times
St_2$ of $GL_6(F_{v_0})$. Then $\pi$ is tempered. We also have
$\pi<\tau_{v_0}$, so $\pi$ is a subquotient of
$\nu^{\frac{1}{2}}St_3\times\nu^{-\frac{1}{2}}St_3$. So the
representation $\xi$ defined by $\xi_v=\tau_v$ if $v\neq v_0$ and
$\xi_{v_0}=\pi$ is a constituent of
$\nu^{\frac{1}{2}}\rho\times\nu^{-\frac{1}{2}}\rho$, hence an
automorphic representation. All its local components are unitary. It
is a $D$-compatible representation because $\pi$ is $2$-compatible.
Let us show that the representation $\xi'$ defined by
$\xi'_v=|\lj|_v(\xi_v)$ for all places $v$ of $F$ is not
automorphic. For every place $v\neq v_0$, we have $\xi'_v=\tau'_v$.
As $\tau'$ is cuspidal, it is enough to show that $\xi'\neq \tau'$
by the theorem \ref{classif} (b) applied to $\tau'$ and the cuspidal
support of $\xi'$. So this comes to show that
$|\lj_{v_0}|(u(St_3,2))\neq |\lj_{v_0}|(\pi)$. Using the formulas we
have for the transfer (proposition  \ref{transfer}) we find
$|\lj_{v_0}|(u(St_3,2))= u(St'_1,3)$ and
$|\lj_{v_0}|(\pi)=St'_2\times St'_1$. If $1_2$ is the trivial
representation of $GL_2(D_{v_0})$, we have $u(St'_1,3)=1_2\times
St'_1$ hence $\xi'_{v_0}\neq \tau'_{v_0}$.

\def\s{{\sigma}}
\def\stf{St_{n}}
\def\std{St'_{n}}
\def\unf{1_{n}}
\def\und{1'_{n}}
\def\nuf{\nu_{n}}
\def\nud{\nu_{n}'}
\def\ccc{{\bf C}}
\def\d{{\mathcal D}}

\section{$L$-functions and $\epsilon'$-factors}

In this section we examine the transfer of $L$-functions and
$\epsilon'$-factors. Nothing is original, the results
are simple computations using [GJ] and [Ja].\\

Let $F$ be a non-Archimedean local field of any characteristic and
$D$ a division algebra of dimension $d^2$ over $F$. For all $n$,
recall that $G_n=GL_n(F)$ and $G'_n=GL_n(D)$.

Suppose the characteristic of the residual field of $F$ is $p$ and its cardinal is $q$.
Let $O_F$ be the ring of integers of $F$ and $\pi_F$ be a uniformizer of $F$.
Fix a additive character $\psi$ of $F$ trivial on $O$ and non trivial on $\pi_F^{-1}O$.
For irreducible representations $\pi$ of $G_n$ or $G'_n$, we adopt the notations  $L(s,\pi)$ and
$\epsilon'(s,\pi,\psi)$ for the $L$-function and the $\epsilon'$-factor, as defined in [GJ].

In this section we will specify $\nu$, because confusion may appear.
For all $n\in \n^*$, $\nuf$ (resp. $\nud$) will denote the absolute
value of the determinant on $G_n$ (resp. $G'_n$); $\unf$ (resp.
$\und$) will denote the trivial representation of $G_n$ (resp.
$G'_n$); let $\stf=Z^u(1_1,n)$ (resp. $\std=T^u(1'_1,n)$) be the
{\bf Steinberg representation} of $G_n$ (resp. $G'_n$). One has
$\stf=|i(\unf)|$ and $\std=|i'(\und)|$. The character of the
Steinberg representation is constant on the set of elliptic
elements, equal to $(-1)^{n-1}$. In particular, we have
$\ccc(St_{d})=1'_{1}$. This implies that $s(1'_{1})=d$ (here
$s(1'_{1})$ is the invariant defined at the section \ref{esi2},
nothing to do with the complex variable $s$). For all $n\in\n^*$,
one has $\ccc(St_{nd})=St'_{n}$.

We bring together facts from [GJ] in the following theorem:

\begin{theo}\label{lfct}
a) We have $L(s,1'_{1})= (1-q^{-s-\frac{d-1}{2}})^{-1}$,
$$L(s,\und)=\prod_{j=0}^{n-1}L(s+d\frac{n-1}{2}-dj,1'_{1})=\prod_{j=0}^{n-1}(1-q^{-s+dj-\frac{dn-1}{2}})^{-1}$$
and
$$\epsilon'(s,\und,\psi)=\prod_{j=0}^{n-1}\epsilon'(s+d\frac{n-1}{2}-dj,1'_{1},\psi)=
\prod_{j=0}^{dn-1}\epsilon'(s+\frac{dn-1}{2}-j,1_{1},\psi).$$

(b) We have
$L(\std)=L(s+d\frac{n-1}{2},1'_{1})=(1-q^{-s-\frac{dn-1}{2}})^{-1}$
and
$$\epsilon'(s,\std,\psi)=\prod_{j=0}^{n-1}\epsilon'(s+d\frac{n-1}{2}-dj,1'_{1},\psi)=
\prod_{j=0}^{dn-1}\epsilon'(s+\frac{dn-1}{2}-j,1_{1},\psi).$$

(c) If $\rho'$ is a cuspidal representation of $G'_x$, then
$L(s,\rho')=1$ unless $x=1$ and $\rho'$ is an unramified character
of $D^\times$. If $x=1$ and $\rho'$ is an unramified character of
$D^\times$, then $\rho'=\nu_{1}'^t$ for some $t\in \cc$ and we have
$L(s,\rho')=(1-q^{-s-t-\frac{d-1}{2}})^{-1}$.

(d) Let $\sigma'=T(\rho',k)$ be an essentially square integrable representation of $G'_{xk}$ where $\rho'$
is a cuspidal representation of
$G'_x$. Then $L(s,\sigma')=L(s,\rho')$.

In particular,  $L(s,\sigma')=1$ unless $x=1$ and $\rho'$ is an
unramified character of $D^\times$. If $x=1$ and $\rho'$ is an
unramified character of $D^\times$ then $\rho'=\nu_{1}'^t$ for some
$t\in \cc$ and then $\sigma'=\nud^{t+d\frac{n-1}{2}} \std$. We have
$L(s,\sigma')=(1-q^{-s-t-\frac{d-1}{2}})^{-1}$ in this case.

We have, in general,
$$\epsilon'(s,\sigma',\psi)=\prod_{j=0}^{k-1}\epsilon'(s+js(\sigma'),\rho',\psi)$$
(in this formula, $s(\s')$ is the invariant defined at section \ref{esi2}).

(e) Let $\s'_i\in \d^{'u}_{n_i}$, $i\in\{1,2,...,k\}$,
$\sum_{i=1}^k=n$. Let $a_1\geq a_2\geq ...\geq a_k$ be real numbers.
Set $S'=\times_{i=1}^k\nu_{n_i}'^{a_i}\s'_i$ and $\pi'=Lg(S')$.

Then
$$L(s,\pi')=\prod_{i=1}^kL(s,\sigma'_i)$$
and
$$\epsilon'(s,\pi',\psi)=\prod_{i=1}^k\epsilon'(s,\sigma'_i, \psi).$$

In particular, if $\rho'_1,\rho'_2,...,\rho'_p$ is the cuspidal support of $\pi'$, then
$$\epsilon'(s,\pi',\psi)=\prod_{i=1}^p\epsilon'(s,\rho'_i, \psi).$$
\end{theo}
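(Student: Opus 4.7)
The plan is to deduce every clause from the Godement-Jacquet machinery in [GJ], together with the classification of $Irr'_n$ in terms of segments recalled in Sections 2--3 and the basic behaviour of $L$ and $\epsilon'$ under parabolic induction and Langlands quotients.

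First, for (a), I would start from the local zeta integral of [GJ] applied to the unramified character $1'_1$ of $D^\times$: testing against the characteristic function of $O_D$ and using that the reduced norm on $O_D$ sends the maximal ideal to $\pi_F^dO_F$, one obtains $L(s,1'_{1})=(1-q^{-s-(d-1)/2})^{-1}$; the shift $(d-1)/2$ is exactly the one that reflects the half-sum of positive roots inside $D^\times$ relative to $F^\times$. The trivial representation $\und$ is the Langlands quotient of the standard module $\prod_{j=0}^{n-1}\nud^{(n-1)/2 - j}1'_{1}$, so the formulas for $L(s,\und)$ and $\epsilon'(s,\und,\psi)$ follow from the multiplicativity theorem of [GJ, §3] and the definition of $\nud = \nu^d$ in terms of $\nu_1'$. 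The expression of $\epsilon'(s,\und,\psi)$ as a product of $\epsilon$-factors for $1_1$ on $F$ is then just a change of variables $s\mapsto s + \frac{dn-1}{2} - j$ and uses the compatibility of $\epsilon'$ under Jacquet-Langlands for characters.

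For (b), I would note that $\std = T^u(1'_1,n)$ is the unique essentially square integrable representation built from the segment $\{1'_1,\nu_{1}'^d 1'_1,\dots,\nu_{1}'^{d(n-1)}1'_1\}$. By the pole analysis in [GJ] specialised to an essentially square integrable representation (the $L$-factor is the one attached to the ``ending'' of the segment via the unramified twist), one reads off $L(s,\std) = L(s + d(n-1)/2, 1'_1) = (1 - q^{-s - (dn-1)/2})^{-1}$; for $\epsilon'$ one applies the multiplicativity of $\epsilon'$ along the segment, which gives the same product formula as in (a). Part (c) is then the direct translation to $G'_x$ of the statement in [GJ] that $L(s,\rho')=1$ as soon as the coefficient of $\rho'$ has compact support modulo centre and is not a ramified character. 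The only surviving case is $x=1$ with $\rho'$ unramified, for which the computation is identical to the one for $1'_1$ twisted by $\nu_{1}'^t$.

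Part (d) combines (c) with the segment representation $T(\rho',k)$: writing $\sigma' = T(\rho',k)$ as Langlands quotient of $\prod_{i=0}^{k-1}\nu_{\rho'}^i\rho'$, and applying the general principle that $L(s,\sigma')$ is the $L$-factor of the ``beginning'' cuspidal (as proved in [Ja] for $G'_n$) gives $L(s,\sigma')=L(s,\rho')$; the $\epsilon'$-factor of the standard module is the product $\prod_{j=0}^{k-1}\epsilon'(s+js(\sigma'),\rho',\psi)$, and the multiplicativity of $\epsilon'$ under passage to the Langlands quotient in [GJ] yields the displayed formula. For (e), the representation $\pi'=Lg(S')$ has the same $L$-function and $\epsilon'$-factor as the standard module $S'$ itself (again by [GJ, §3]), and both factor as products over the essentially square integrable constituents of $S'$; the second equality for $\epsilon'$ then follows by iterating the formula of (d) and observing that the cumulative shifts match precisely the Langlands data of the cuspidal support. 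The one delicate point will be step (b) and the passage from the standard module to the Langlands quotient: one needs to invoke the key theorem of [GJ] that the $L$ and $\epsilon'$ of a Langlands quotient coincide with those of its standard module (so no cancellation of poles and zeros occurs), which is the only genuinely non-formal ingredient; everything else is bookkeeping in the classification of Sections 2--3.
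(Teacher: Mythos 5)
Your plan to reduce everything to [GJ] and [Ja] is the same route the paper takes, and your treatments of (a), (c), (d), (e) match the paper's strategy. The genuine gap is in (b), which you correctly single out as delicate but for the wrong reason. You say the key non-formal ingredient is that ``$L$ and $\epsilon'$ of a Langlands quotient coincide with those of its standard module.'' That is what underlies (e), but it says nothing about $L(s,St'_n)$: the representation $St'_n$ is essentially square integrable, hence its own Langlands quotient with trivial standard module, so the principle is vacuous here. And if one instead tried ``$L$ of a subquotient equals $L$ of the full cuspidally induced representation,'' one would get the product $\prod_{j=0}^{n-1}L(s+d\frac{n-1}{2}-dj,1'_1)$, which by part (a) is the $L$-function of $1'_n$, not of $St'_n$; the whole content of (b) is that these two subquotients of the same induced representation have different $L$-functions.

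What the paper actually does for (b) is an induction on $n$: realize $St'_n$ as a subquotient of $\nu_1'^{-d(n-1)/2}1'_1\times\nu_{n-1}'^{d/2}St'_{n-1}$; by [GJ] Corollary 3.6, $L(s,St'_n)$ is one of the two $L$-factors or their product; and the decisive, non-formal input is the bound from [GJ] Proposition 1.3 and Theorem 3.3 --- the $L$-function of a representation square integrable modulo the center has no pole with real part greater than $-\frac{d-1}{2}$ --- which kills the factor $L(s,\nu_1'^{-d(n-1)/2}1'_1)$ (whose pole lies in the positive half-plane for $n\geq 2$) and hence also the product, leaving only the second factor and closing the induction. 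Your phrase ``the pole analysis in [GJ] specialised to an essentially square integrable representation'' gestures at this but never states what the analysis is, and the ingredient you actually name at the end is a different one, relevant to (e) but not to (b). To repair the proof, replace your stated key ingredient by the square-integrability pole bound and carry out the induction.
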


{\bf Proof.} (a) This is shown in the proposition  6.11 in [GJ], where the formula is slightly wrong.
The reader may verify that the good formula for the $L$-function in [GJ], proposition  6.9 is with $(d-1)$
instead of $(n-1)$, as indicated by the proof of this proposition . Then this typo error is propagated to [GJ],
proposition  6.9, where the reader may easily verify that the right formula obtained, once corrected the proposition  6.9,
is our formula. For the $\epsilon'$-factor our formula fits the [GJ] one.

(b) The $\epsilon'$-factor of $\std$ equals the $\epsilon'$-factor
of $\und$ as they are both sub-quotients of the same induced
representation ([GJ], corollary  3.6).

Let us check the $L$-function. For the particular case $D=F$, the computation of the $L$-function is theorem 7.11 (4),
[GJ]. Let us give a general (different) proof by induction on $n$.

For $n=1$ we have $\std=St'_{1}=1'_{1}$ and the result is implied by
(a).

For any $n>1$, the representation $\std$ is a subquotient of the
induced representation from
$\nu_{1}'^{-\frac{d(n-1)}{2}}1'_{1}\otimes
\nu_{n-1}'^{\frac{d}{2}}St'_{n-1}$. We know that
$$L(\nu_{1}'^{\frac{d(n-1)}{2}}1'_{1})=(1-q^{-s-\frac{d-1}{2}+\frac{d(n-1)}{2}})^{-1}$$
and, by the induction assumption, we have
$$L(s,\nu_{n-1}'^{\frac{d}{2}}St'_{n-1})=(1-q^{-s-\frac{dn-1}{2}})^{-1}.$$

By [GJ], corollary  3.6, $L(s,\std)$ is equal to one of these two
functions or to their product. But, by [GJ], proposition  1.3 and
theorem 3.3 (1) and (2), the poles of $L(s,\std)$ cannot be greater
than $\frac{d(n-1)}{2}-\frac{dn-1}{2}=-\frac{d-1}{2}$, so there is
no positive pole (this trick comes from the original proof: an
$L$-function of a square integrable representation cannot have a
pole with  positive real part). So $L(s,\std)=
L(s,\nu_{n-1}'^{\frac{d}{2}}St'_{n-1})=(1-q^{-s-\frac{dn-1}{2}})^{-1}.$

(c) The first assertion is a consequence of lemma 4.1, proposition  4.4 and proposition  5.11 of [GJ]
(prop 5.11 is not enough, since the authors assume $m>1$ at the beginning of the section 5).
The second assertion is a direct consequence of the point (a) of the present theorem.

(d) For the particular case of $G_n$ this is explained below
proposition  3.1.3 of [Ja]. The same proof apply to $G'_n$, using
the calculus for $St'_{1}$, i.e. the point (b).

(e) This is proven in [Ja] for $G_n$, but the same proof apply to $G'_n$.\qed

\begin{theo}
Let $\ccc$ be the local Jacquet-Langlands correspondence between $G_{nd}$ and $G'_n$.
Then, for all
$\sigma\in \d^u_n$,
we have $L(s,\s)=L(s,\ccc(\s))$ and
$\epsilon' (s,\s,\psi)=\epsilon' (s,\ccc(\s),\psi)$.
\end{theo}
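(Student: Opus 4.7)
The strategy is to combine the explicit formulas in Theorem~\ref{lfct} with the classical Jacquet--Langlands preservation of local $\epsilon$-factors for essentially square integrable representations whose image is cuspidal. Using Section~\ref{esi}, write $\sigma = Z^u(\rho, l)$ with $\rho \in \cusp^u_{nd/l}$; using Section~\ref{esi2}, set $\sigma' := \ccc(\sigma) = T^u(\rho', k)$ with $\rho' \in \cusp_{n/k}^{'u}$. The definition of $s(\rho')$ together with the identity $\ccc(Z^u(\rho, s(\rho'))) = \rho'$ forces $l = k\,s(\rho')$. Since $g \lra g'$ implies $\det(g) = \mathrm{Nrd}(g')$, the defining character identity of $\ccc$ yields $\ccc(\nu^t \tau) = \nu^t \ccc(\tau)$ for $\tau \in \d_{nd}$ and $t \in \cc$; this twist equivariance is used throughout.

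For the $L$-function, Theorem~\ref{lfct}(c)(d) and its split analogue (from [Ja]) imply $L(s, \sigma) = L(s, \sigma') = 1$ unless $\sigma = \nu^t St_{nd}$ for some $t \in \cc$. The identity $\ccc(St_{nd}) = \std$ (recalled before Theorem~\ref{lfct}) combined with twist equivariance shows this happens if and only if $\sigma' = \nu^t \std$, and in that case Theorem~\ref{lfct}(b) yields
\[
L(s, \nu^t St_{nd}) = (1 - q^{-s - t - (nd-1)/2})^{-1} = L(s, \nu^t \std).
\]

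For the $\epsilon'$-factor, Theorem~\ref{lfct}(e) applied on both sides (using [Ja] for the split case) expresses both factors as products over their cuspidal supports:
\begin{align*}
\epsilon'(s, \sigma, \psi) &= \prod_{i=0}^{l-1} \epsilon'\bigl(s+i,\ \nu^{-(l-1)/2}\rho,\ \psi\bigr), \\
\epsilon'(s, \sigma', \psi) &= \prod_{j=0}^{k-1} \epsilon'\bigl(s + j\,s(\rho'),\ \nu_{\rho'}^{-(k-1)/2}\rho',\ \psi\bigr).
\end{align*}
Group the $l = k\,s(\rho')$ factors in the first product into $k$ consecutive blocks of length $s(\rho')$; by Theorem~\ref{lfct}(e) used once more in the opposite direction, block $j$ collapses into $\epsilon'\bigl(s + j\,s(\rho'),\ Z(\rho_j, s(\rho')),\ \psi\bigr)$ for an appropriate $\nu$-twist $\rho_j$ of $\rho$. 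Twist equivariance of $\ccc$, together with $\ccc(Z^u(\rho, s(\rho'))) = \rho'$, then reduces the identity to the single base case
\[
\epsilon'\bigl(s,\ Z^u(\rho, s(\rho')),\ \psi\bigr) = \epsilon'(s, \rho', \psi).
\]

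This base case is the classical preservation of local $\epsilon$-factors under Jacquet--Langlands for an essentially square integrable representation and its cuspidal image, established in [JL] and [GJ] and extended to positive characteristic in [Ba2]. The main subtlety is the bookkeeping of shifts arising from the unitary centerings $Z^u(\rho, k) = Z(\nu^{-(k-1)/2}\rho, k)$ and $T^u(\rho', k) = T(\nu_{\rho'}^{-(k-1)/2}\rho', k)$ and from $\nu_{\rho'} = \nu^{s(\rho')}$; beyond this the argument is a direct verification.
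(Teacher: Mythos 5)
Your handling of the $L$-functions is the same as the paper's: both sides are trivial unless $\sigma$ is a twist of the Steinberg representation, the Steinberg identification $\ccc(St_{nd})=St'_n$ reduces to the explicit formulas of Theorem~\ref{lfct}(a)(b), and twist-equivariance of $\ccc$ (which you correctly justify from the equality of characteristic polynomials for $g\lra g'$) handles the twists.

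Your treatment of the $\epsilon'$-factors, however, departs from the paper's, and it has a gap at the crucial point. The local reduction itself is correct: the bookkeeping $l = k\,s(\rho')$, the decomposition of both $\epsilon'$-factors over the cuspidal support via Theorem~\ref{lfct}(d)(e) and its split analogue in [Ja], the regrouping of the $l$ split factors into $k$ blocks of length $s(\rho')$, and the identification $\ccc\bigl(Z(\nu^{-(l-1)/2}\rho^u,s(\rho'))\bigr)=\nu_{\rho'}^{-(k-1)/2}\rho'^u$ via twist-equivariance all check out. You have genuinely reduced the assertion to the single atomic case where $\ccc$ sends an essentially square integrable representation to a cuspidal one.

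The problem is the claim that this base case is ``classical preservation of local $\epsilon'$-factors \ldots established in [JL] and [GJ].'' It is not. [JL] treats only the quaternion algebra case $n=2$. [GJ] sets up $L$- and $\epsilon'$-factors for $GL_n(D)$ and proves the local and global functional equations, but it predates [DKV] and does not establish the compatibility of $\epsilon'$-factors with the local Jacquet--Langlands correspondence for general $n$ and $d$. There is no shortcut: the base case requires exactly the same global argument that one would need for the full statement. The paper's own proof of the $\epsilon'$-factor lemma does this directly, with no reduction to the cuspidal-image case: one embeds $\sigma$ as the local component at one place of a global cuspidal representation whose components at the other places in $V$ are twists of the Steinberg representation, transfers globally, and compares the $\epsilon$-factors in the two functional equations; the Steinberg anchors at the remaining ramified places are controlled by the explicit formulas of Theorem~\ref{lfct}(a)(b). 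This is what the cryptic phrase ``an easy global correspondence \ldots and the previous calculus for the Steinberg representations'' plus the citation of [Ba2], page~741, is referring to. If you replace your base-case citation with this global argument (or simply with [Ba2], which proves it for all essentially square integrable $\sigma$), your proof closes --- but then the local reduction, while a nice consistency check against Theorem~\ref{lfct}(e), becomes logically superfluous, since the global argument already handles all of $\d^u_{nd}$ at once.
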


{\bf Proof.} Let us show it first for the Steinberg representation
and its twists. We have $\ccc(St_{nd})=\std$. The theorem \ref{lfct}
(a) and (b) implies the statement in this case. This implies then
the statement for all the twist of $St_{nd}$ with characters.

\begin{lemme}
For all
$\sigma\in \d^u_{nd}$,
we have $\epsilon' (s,\s,\psi)=\epsilon' (s,\ccc(\s),\psi)$.
\end{lemme}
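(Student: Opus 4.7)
The plan is to reduce the lemma, via Theorem \ref{lfct}(d)--(e), to an ``atomic'' identity $(*)$ comparing the $\epsilon'$-factor of an essentially square integrable representation $Z^u(\rho,s(\rho'))$ of $G_{nd}$ with that of the cuspidal representation $\rho'$ to which it corresponds under $\ccc$. The Steinberg case treated in the theorem just above handles $(*)$ when $\rho$ is the trivial (or an unramified) character of $F^\times$, and the general $(*)$ will then be obtained either by a direct appeal to Godement--Jacquet [GJ] or by a global argument using Theorem \ref{correspondence}.

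Concretely, I would write $\sigma=Z^u(\rho,K)$ with $\rho$ unitary cuspidal, and put $\sigma'=\ccc(\sigma)=T^u(\rho',k)$, so that $K=k\,s(\rho')$. Applying Theorem \ref{lfct}(e) to $\sigma$ and to $\sigma'$ separately, both $\epsilon'$-factors equal products over their respective cuspidal supports:
\begin{equation*}
\epsilon'(s,\sigma,\psi)=\prod_{j=0}^{K-1}\epsilon'(s,\nu^{j-(K-1)/2}\rho,\psi),\qquad \epsilon'(s,\sigma',\psi)=\prod_{j=0}^{k-1}\epsilon'(s,\nu_{\rho'}^{j-(k-1)/2}\rho',\psi).
\end{equation*}
Setting $s_0=s(\rho')$ and parameterizing $j=is_0+m$ with $0\le i<k$, $0\le m<s_0$ in the first product, one may re-package each block of $s_0$ consecutive factors via Theorem \ref{lfct}(e) read backwards into a single $\epsilon'$-factor of $Z^u(\rho,s_0)$. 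This rewrites the first product as
\begin{equation*}
\prod_{i=0}^{k-1}\epsilon'\bigl(s+(i-\tfrac{k-1}{2})s_0,\,Z^u(\rho,s_0),\,\psi\bigr),
\end{equation*}
and the second is the identical expression with $Z^u(\rho,s_0)$ replaced by $\rho'=\ccc(Z^u(\rho,s_0))$. Hence the lemma reduces to the atomic identity
\begin{equation*}
(*)\qquad \epsilon'(z,Z^u(\rho,s_0),\psi)=\epsilon'(z,\rho',\psi)\quad\text{as meromorphic functions of }z,
\end{equation*}
for every unitary cuspidal $\rho$ whose associated $Z^u(\rho,s_0)$ transfers under $\ccc$ to a cuspidal $\rho'$ on the inner form.

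For $(*)$ I see two routes. The direct route is to appeal to [GJ, \S6]: the $\epsilon'$-factors there are defined through the local functional equations of zeta integrals attached to matrix coefficients of square integrable representations, and the character identity $\chi_{Z^u(\rho,s_0)}(g)=\pm\chi_{\rho'}(g')$ at corresponding regular semisimple elements matches the two zeta integrals on $G_{ld}$ and $G'_l$; by Theorem \ref{lfct}(d) the $L$-factors on both sides are trivial except when $\rho$ is an unramified character of $F^\times$, in which case $(*)$ is the Steinberg identity already proved. The indirect, global route is to globalize $\rho'$ to a basic cuspidal automorphic representation of some $G'(\aa)$ (using density of prescribed local components, e.g.\ [BB]), transfer it via Theorem \ref{correspondence} to the Moeglin--Waldspurger discrete series $MW(\tilde\rho,s_0)$ on $GL_{ld}(\aa)$, and read off the matching of local $\epsilon'$-factors at the chosen place from the two global functional equations, using that the local factors at all other places already match (by construction outside $V$, and inductively or by varying the global data at the remaining places of $V$).

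The algebraic regrouping in the second paragraph is formal given Theorem \ref{lfct}(d)--(e); the real obstacle is $(*)$, which is the only genuinely analytic input beyond the Steinberg case. Once $(*)$ is settled the lemma follows, and with it — via Theorem \ref{lfct}(e) applied to a general irreducible representation and its cuspidal support — the preservation of $\epsilon'$-factors throughout $\d^u_{nd}$.
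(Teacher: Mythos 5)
Your algebraic regrouping via Theorem \ref{lfct}(d)--(e) is correct, and it does reduce the lemma to the ``atomic'' case $(*)$; but this reduction is an extra step that the paper does not take, and it does not by itself reduce the analytic content (the atomic case is just the sub-case of the lemma where $\ccc(\sigma)$ is cuspidal). The real issue is how $(*)$ is established, and there your first, ``direct'' route has a genuine gap. The Godement--Jacquet $\epsilon'$-factor is defined through functional equations of zeta integrals $\int \Phi(g)\,f(g)\,|\det g|^s\,dg$ built from \emph{matrix coefficients} $f$ and Schwartz functions $\Phi$ on $M_n(F)$, resp.\ $M_n(D)$. The Jacquet--Langlands correspondence, on the other hand, is a \emph{character} identity. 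There is no direct passage from the character identity to an identity of zeta integrals on the two different groups --- one cannot simply ``match the two zeta integrals'' via $\chi_{Z^u(\rho,s_0)}(g)=\pm\chi_{\rho'}(g')$, since the integrands are matrix coefficients and not characters, and the two integrals run over different groups with no transfer of Schwartz--coefficient pairs available. Also, observing that the $L$-factors are trivial when $\rho$ is not an unramified character merely turns $\epsilon'$ into $\epsilon$ on each side; it does not make the cross-group identity $\epsilon(z,Z^u(\rho,s_0),\psi)=\epsilon(z,\rho',\psi)$ free, so the reduction ``to the Steinberg case'' does not actually happen outside the Steinberg case.

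Your second, global route is in the right spirit and is essentially the argument the paper has in mind, but the choice of global correspondence matters. You propose to invoke Theorem \ref{correspondence} and the Moeglin--Waldspurger classification; those results in this paper are established only over a global field of characteristic zero, whereas Section 7 (and the theorem containing this lemma) is stated over a non-Archimedean local field \emph{of any characteristic}. The paper's proof deliberately invokes an ``easy global correspondence (true in all characteristics)'' from [Ba2], together with the Steinberg computation just carried out, precisely to sidestep this restriction: one globalizes with Steinberg (or unramified) components at all auxiliary places, transfers cuspidal to cuspidal in this special situation (which can be done in any characteristic without the full trace-formula machinery), and compares global functional equations. If you replace your appeal to Theorem \ref{correspondence} by that more elementary globalization and drop the unnecessary atomic reduction and the unworkable direct route, your argument becomes the paper's argument.
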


{\bf Proof.} The proof is standard, using an easy global
correspondence
(true in all characteristics) and the previous
calculus for the Steinberg representations.
See for example [Ba2], page 741 : {\it Les facteurs $\epsilon'$}.\qed
\ \\

Let us complete the proof of the theorem with the calculus of $L$-functions.
If $\sigma\in \d^u_{nd}$ or $\d^{'u}_n$
which is not a twist of the Steinberg representation,  then by theorem \ref{lfct} d) implies that its
$L$-function is trivial and so its $\epsilon'$-factor is equal to its $\epsilon$-factor.
As $\ccc(\s)$ is a twist of the Steinberg representation
if and only if $\s$ itself is a twist of the Steinberg representation, the statement has been now proven for all
$\s\in \d^u_{nd}$.\qed

\begin{cor}\label{epsilon}
Let $\s'_i\in \d^{'u}_{n_i}$, $i\in\{1,2,...,k\}$, $\sum_{i=1}^k=n$.
Let $a_1\geq a_2\geq ...\geq a_k$ be real numbers. Set
$S'=\times_{i=1}^k\nu_{n_i}'^{a_i}\s'_i$. Let
$\ccc^{-1}(\s'_i)=\s_i\in\d^u_{dn_i}$ and set
$S=\times_{i=1}^k\nu_{n_id}^{a_i}\s_i$. Then
$L(s,Lg(S'),\psi)=L(s,Lg(S),\psi)$ and
$\epsilon'(s,Lg(S'),\psi)=\epsilon'(s,Lg(S),\psi)$.
\end{cor}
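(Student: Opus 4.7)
The plan is to reduce the statement to the corresponding identity for the individual essentially square integrable factors, where we can invoke the theorem proven just above the corollary.

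First I would apply theorem \ref{lfct}~(e) to both sides: since $S'=\prod_{i=1}^k \nu_{n_i}'^{a_i}\s'_i$ is a standard representation (the ordering $a_1\geq\cdots\geq a_k$ is built in), one has
$$L(s,Lg(S'))=\prod_{i=1}^k L(s,\nu_{n_i}'^{a_i}\s'_i),\qquad \epsilon'(s,Lg(S'),\psi)=\prod_{i=1}^k\epsilon'(s,\nu_{n_i}'^{a_i}\s'_i,\psi),$$
and symmetrically for $S$. So it suffices to prove the equality factor by factor, i.e.\ to show for each $i$ that $L(s,\nu_{n_id}^{a_i}\s_i)=L(s,\nu_{n_i}'^{a_i}\s'_i)$ and similarly for $\epsilon'$.

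Next I would apply the standard shift property of the local factors for twists by $\nu^{a_i}$: namely $L(s,\nu^{a_i}\tau)=L(s+a_i,\tau)$ and the corresponding shift for $\epsilon'$ (this follows from the very definitions in [GJ]). This reduces the factor-by-factor equality to showing $L(s,\s_i)=L(s,\s'_i)$ and $\epsilon'(s,\s_i,\psi)=\epsilon'(s,\s'_i,\psi)$. But $\s_i\in\d_{n_id}^u$ and $\s'_i=\ccc^{-1}(\s'_i)^{-1}=\ccc(\s_i)$ by hypothesis, so this is exactly the content of the theorem proved just above the corollary.

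Multiplying the factor-wise equalities over $i\in\{1,\dots,k\}$ and combining with the initial product decomposition then yields the claim.

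There is essentially no obstacle here: the only point requiring minimal care is verifying that the twist $\nu^{a_i}$ on $G_{n_id}$ corresponds, under Jacquet--Langlands, to the twist $\nu^{a_i}$ on $G'_{n_i}$, so that both the Langlands ordering and the shift calculations are genuinely parallel on the two sides. This is immediate because $\ccc$ preserves central characters via the canonical identification $Z(G_{n_id})\simeq Z(G'_{n_i})\simeq F^\times$, under which $|\det|$ on $G_{n_id}$ and the absolute value of the reduced norm on $G'_{n_i}$ coincide on corresponding elements.
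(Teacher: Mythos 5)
Your argument is correct and follows the same route as the paper: reduce via theorem \ref{lfct}~(e) to the essentially square integrable factors, then invoke the preceding theorem asserting $L(s,\s)=L(s,\ccc(\s))$ and $\epsilon'(s,\s,\psi)=\epsilon'(s,\ccc(\s),\psi)$ for $\s\in\d^u_{nd}$. The paper's own proof is simply a one-line citation of these two results; your version makes explicit the intermediate shift step $L(s,\nu^{a_i}\tau)=L(s+a_i,\tau)$ needed to pass from the twisted factors $\nu^{a_i}\s_i$ to the unitary $\s_i$ to which that theorem applies, which is a useful clarification (and also flags, correctly, that the compatibility of the twist $\nu^{a_i}$ across the Jacquet--Langlands correspondence is what makes the factor-by-factor comparison legitimate). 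Note one stray typo in your write-up: $\s'_i=\ccc^{-1}(\s'_i)^{-1}=\ccc(\s_i)$ should read $\s'_i=\ccc(\s_i)$, since $\s_i=\ccc^{-1}(\s'_i)$ by definition.
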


{\bf Proof.} This is implied by the previous theorem and the point
(e) of the theorem \ref{lfct}.\qed

\begin{cor}
Assume the characteristic of $F$ is zero.
If $u\in Irr^u_{nd}$ is such that ${\bf LJ}_n(u)\neq 0$. Then $\epsilon'(s,u,\psi)=\epsilon'(s,|{\bf LJ}|_n(u),\psi)$.
\end{cor}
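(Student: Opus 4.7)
The plan is to reduce, via Tadi\'c's classification of unitary representations, to the elementary building blocks $u(\sigma,k)$, and then to invoke Corollary~\ref{epsilon} by means of the invariance of the $\epsilon'$-factor under the Aubert involution.

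First, I would write $u=\prod_i\pi_i$ as an irreducible product of building blocks $\pi_i\in\mathcal{U}$ given by Tadi\'c (see section~\ref{unit0}). Since $d$-compatibility is preserved by factors of products (section~\ref{extended}) and $\lj$ commutes with parabolic induction, $\lj(u)=\pm\prod_i|\lj|(\pi_i)$; this product is irreducible by proposition~\ref{unit}(b), so $|\lj|(u)=\prod_i|\lj|(\pi_i)$. Theorem~\ref{lfct}(e) then gives multiplicativity of $\epsilon'$-factors on both sides, reducing the corollary to each $\pi_i$ separately. The case $\pi_i=\pi(u(\sigma,k),\alpha)=\nu^\alpha u(\sigma,k)\times\nu^{-\alpha}u(\sigma,k)$ reduces further to the case of $u(\sigma,k)$ via the same multiplicativity, together with the transparent compatibility of $|\lj|$ and $\epsilon'$ with $\nu^x$-twists.

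For $\pi_i=u(\sigma,k)$, apply the dichotomy of proposition~\ref{transfer}. In case~(a), $|\lj|(u(\sigma,k))=\bar u(\sigma',k)$ with $\sigma'=\ccc(\sigma)$; both sides are Langlands quotients of standard representations $\prod_{j=0}^{k-1}\nu^{a_j}\sigma$ and $\prod_{j=0}^{k-1}\nu^{a_j}\sigma'$ whose essentially square integrable components match under $\ccc$ with identical real parts, so Corollary~\ref{epsilon} applies directly. In case~(b), one has $|\lj|(u(\sigma,k))=|i'(\bar u(\tau',l))|$ with $\sigma=Z^u(\rho,l)$, $\tau=Z^u(\rho,k)$ (now $d$-compatible), and $\tau'=\ccc(\tau)$, and by proposition~\ref{dual}, $|i(u(\sigma,k))|=u(\tau,l)$. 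The crucial observation is that both Aubert involutions $|i|,|i'|$ preserve the cuspidal support of an irreducible representation (a standard property from [Au]), so by theorem~\ref{lfct}(e) — which expresses $\epsilon'$ as a product over the cuspidal support — $\epsilon'$ is preserved under $|i|$ and $|i'|$. Thus
$$\epsilon'(s,u(\sigma,k),\psi)=\epsilon'(s,u(\tau,l),\psi),\qquad\epsilon'(s,|i'(\bar u(\tau',l))|,\psi)=\epsilon'(s,\bar u(\tau',l),\psi),$$
and applying case~(a) to the pair $(u(\tau,l),\bar u(\tau',l))$ closes the chain.

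The main obstacle is precisely the invariance of $\epsilon'$ under the Aubert involutions in case~(b); this rests on theorem~\ref{lfct}(e), which reduces $\epsilon'$ to the cuspidal support, together with the preservation of the cuspidal support by the Aubert involution. The sign $\varepsilon\in\{\pm 1\}$ from proposition~\ref{transfer}(b) and from $\lj(u)=\pm|\lj|(u)$ is harmless since $\epsilon'(s,\pi,\psi)$ depends on $\pi$ only up to sign in $\rrr'_n$. Case~(c) of proposition~\ref{transfer} never occurs here, because $\lj_n(u)\neq 0$ forces each $\pi_i$ to transfer nontrivially.
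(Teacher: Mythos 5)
Your proof is correct and follows essentially the same route as the paper: reduce to $u=u(\sigma,k)$ via Tadi\'c's classification and multiplicativity of $\epsilon'$-factors, then split on the dichotomy of proposition~\ref{transfer}, applying Corollary~\ref{epsilon} directly in case (a) and passing through the Aubert involutions together with the cuspidal-support characterization of $\epsilon'$ (theorem~\ref{lfct}(e)) in case (b). You are somewhat more explicit than the paper about the initial reduction and about why the Aubert involution preserves $\epsilon'$, but the key steps and citations coincide.
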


{\bf Proof.} It is enough to prove it for $u=u(\s,k)$, $\s\in \d^u_{p}$, $k,p\in\n^*$,
such that $|{\bf LJ}_{pk}|(u)=u'\neq 0$. If we are in the case (a) of the
proposition  \ref{main}, then  $u$ and $u'$ are like in the
corollary  \ref{epsilon}. In particular, their $L$ functions are equal too.
If we are in the case (b) of the proposition  \ref{main}, then
$|i(u)|$ and $|i'(u')|$ are like in the corollary  \ref{epsilon}.
Now, the $\epsilon'$-factor depends only on
the cuspidal support (theorem \ref{lfct} e)). So the $\epsilon'$-factor is the same for an irreducible representation
and its dual. But in general we do not get equality for the $L$-functions in this case.\qed

\section{Bibliography}

[AC] J.Arthur, L.Clozel, {\it Simple Algebras, Base Change, and the
Advanced Theory of the Trace Formula},  Ann. of Math. Studies,
Princeton Univ. Press 120, (1989).

[Au] A.-M. Aubert, Dualit\'e dans le groupe de Grothendieck
de la cat\'egorie des repr\'esentations lisses de longeur
finie d'un groupe r\'eductif $p$-adique, {\it Trans. Amer. Math. Soc.},
347 (1995), 2179-2189 (there is an Erratum: {\it Trans. Amer. Math. Soc.},
348 (1996), 4687-4690).

[Ba1] A.I.Badulescu, Orthogonalit\'e des caract\`eres pour $GL_n$ sur un corps local de caract\'eristique non nulle,
{\it Manuscripta Math.}, 101 (2000), 49-70.

[Ba2] A.I.Badulescu, Correspondance de Jacquet-Langlands en caract\'eristique
non nulle, {\it Ann. Scient. \'Ec. Norm. Sup.} 35 (2002), 695-747.

[Ba3] A.I.Badulescu, Un r\'esultat de finitude dans le spectre
automorphe pour les formes int\'erieures de $GL_n$ sur un corps global,
{\it Bull. London Math. Soc.} 37 (2005), 651-657.

[Ba4] A.I.Badulescu, Jacquet-Langlands et unitarisabilit\'e, to appear
in {\it Journal de l'Institut de Math\'ematiques de Jussieu}, may be
found at http://www-math.univ-poitiers.fr/\~\ badulesc/articole.html.

[BB] A.I.Badulescu, Un th\'eor\`eme de finitude {\it Compositio
Mathematica} 132 (2002), 177-190, with an Appendix by P.Broussous.

[Be] J.N.Bernstein, $P$-invariant distributions on $GL(N)$ and the classification of unitary representations of $GL(N)$
(non-Archimedean case), in
{\it Lie groups and representations II}, Lecture Notes in Mathematics 1041, Springer-Verlag, 1983.

[BJ] Automorphic forms and automorphic representations,
{\it Automorphic forms, representations and $L$-functions},
Proc. of Symp in Pure Math. vol. 33, Part 1 (1979), 189-202.

[BR1] A.I.Badulescu, D.Renard, Sur une conjecture de Tadi\'c, {\it Glasnik Matematicki 2004}.

[BR2] A.I.Badulescu, D.Renard, Zelevinsky involution and
Moeglin-Waldspurger algorithm for $GL(n,D)$, in {\it Functional
analysis IX} (Dubrovnik, 2005), 9-15., Various Publ. Ser. 48, Univ.
Aarhus, Aarhus, 2007.

[Ca] W.Casselman, {\it Introduction to the theory of admissible representations of reductive $p$-adic groups}
preprint.

[DKV] P.Deligne, D.Kazhdan, M.-F.Vignéras,
Représentations des algèbres centrales simples $p$-adiques, {\it
Représentations des groupes réductifs sur un corps local},
Hermann, Paris 1984.

[GJ] R.Godement, H.Jacquet, {\it Zeta functions of simple algebras},
Lecture Notes in Math. 260, Springer (1972).

[Fl1] Decomposition of representations into tensor products,
{\it Automorphic forms, representations and $L$-functions},
Proc. of Symp in Pure Math. vol. 33, Part 1 (1979), 179-185.

[Fl2] D.Flath, A comparison for the automorphic representations of GL(3) and its twisted forms, {\it Pacific J. Math.}
97 (1981), 373-402.

[Ja] Principal $L$-functions of the linear group,
{\it Automorphic forms, representations and $L$-functions},
Proc. of Symp in Pure Math. vol. 33, Part 2 (1979), 63-86.

[JL] H.Jacquet, R.P.Langlands, {\it Automorphic forms on
GL(2)}, Lecture Notes in Math. 114, Springer-Verlag (1970).

[JS] H.Jacquet, J.A.Shalika, On Euler products and the classification
of automorphic forms II, {\it Amer. J. Math.}  103 (1981), no. 4, 777-815.

[KS] C. D. Keys, F. Shahidi, Artin $L$-functions and normalization of
intertwining operators, {\it Ann. Sci. \'Ecole Norm. Sup. (4)} 21 (1988),
no. 1, 67-89.

[La] R.P.Langlands, On the notion of an automorphic representation,
{\it Automorphic forms, representations and $L$-functions},
Proceedings of Symposia in Pure Mathematics vol. 33, part. 1 (1979),  203-207.

[MW1] C. Moeglin, J.-L.Waldspurger, Sur l'involution de Zelevinski,
{\it J. Reine Angew. Math.}  372  (1986), 136-177.

[MW2] C.Moeglin, J.-L.Waldspurger, Le spectre r\'esiduel de ${\rm
  GL}(n)$,
{\it  Ann. Scient. \'Ec. Norm. Sup.} (4)  22  (1989),  no. 4,
605--674.

[Pi] R.S.Pierce, {\it Associative algebras}, Grad. Texts in Math., Vol 88, Springer-Verlag.

[P-S] I. Piatetski-Shapiro, Multiplicity one theorems,
{\it Automorphic forms, representations and $L$-functions},
Proceedings of Symposia in Pure Mathematics vol. 33 (1979), part. 1,
209-212.

[RV] D. Ramakrishnan, R. J. Valenza
{\it Fourier analysis on number fields}, Grad. Texts in Math., Vol. 186,
 Springer-Verlag, New York, 1999.

[Rod] F. Rodier,  Repr\'esentations de $GL(n,k)$ o\`u $k$ est un corps $p$-adique,
{\it S\'eminaire Bourbaki, Vol. 1981/1982}, Asterisque 92-93, Soc. Math. France, Paris, 1982
(201-218).

[Ro] J.Rogawski {\it Representations of $GL(n)$ and division algebras over a $p$-adic field},
Duke. Math. J. 50 (1983), 161-201.

[Sh] J.A.Shalika, The multiplicity one theorem for $GL_n$, {\it
Ann. of Math.} 100 (1974), 171-193.

[ScS] P.Schneider, U.Stuhler, Representation theory and sheaves on the
Bruhat-Tits building,  {\it Inst. Hautes \'Etudes Sci. Publ. Math.}  No. 85
(1997), 97--191.

[Ta1] M.Tadi\'c, Classification of unitary representations in irreducible
representations of a general linear group (non-Archimedean case),
{\it Ann. Scient. \'Ec. Norm. Sup.} 19 (1986), 335-382.

[Ta2] M.Tadi\'c, Induced representations of $GL(n;A)$ for a
$p$-adic division algebra $A$, {\it J. Reine angew. Math.} 405 (1990),
48-77.

[Ta3] M.Tadi\'c, An external approach to unitary representations, {\it Bulletin Amer. Math. Soc.} 28, No. 2 (1993),
215-252.

[Ta4] M.Tadi\'c, On characters of irreducible unitary representations of general
linear groups
{\it Abh. Math. Sem. Univ. Hamburg} 65
(1995), 341-363.

[Ta5] M.Tadi\'c, Correspondence on characters of irreducible unitary representations of $GL(n,\cc)$,

{\it Math. Ann. 305}
(1996), 419-438.

[Ta6] M.Tadi\'c, Unitarity and Jacquet-Langlands correspondences, a paraître dans {\it Pacific Journal}.

[Vi] M.-F. Vign\'eras, Correspondence betwen $GL_n$ and a division
algebra, {preprint}.

[Ze]  A.Zelevinsky, Induced representations of reductive
$p$-adic groups II, {\it Ann. Scient. \'Ec. Norm. Sup.} 13 (1980), 165-210.

\newpage

\appendix

\section{The Residual Spectrum of $GL_n$ over a
Division Algebra}

\centerline{\footnotesize{by Neven GRBAC}}

\bigskip

\subsection{Introduction}

In this Appendix the residual spectrum of $GL_n$ over a division
algebra is decomposed. The approach is the Langlands spectral
theory as explained in \cite{mwknjiga} and \cite{langlands}.
However, the results in the paper, obtained using the Arthur trace
formula of \cite{arthurclozel}, classify the entire discrete
spectrum of $GL_n$ over a division algebra. Hence, the problem
reduces to distinguishing the residual representations in the
discrete spectrum. This simplifies the application of the
Langlands spectral theory since it reduces the region of the
possible poles of the Eisenstein series to a cone well inside the
positive Weyl chamber. Having in mind the classification of the
discrete spectrum and the multiplicity one theorem, we obtain the
classification of the cuspidal spectrum as a consequence of the
decomposition of the residual spectrum. In fact, it turns out that
the only cuspidal representations are the basic cuspidal ones.

The idea of writing this Appendix was born during our stay at the
Erwin Schr\"{o}dinger Institute, Vienna, Austria in December 2006
and February 2007. I would like to thank Joachim Schwermer for his
kind invitation. My gratitude goes to Goran Mui\' c for many
useful conversations and constant help. I am grateful to Colette
M\oe glin for sharing her insight and advices on the normalization
of the standard intertwining operators. Also, I would like to
thank Marko Tadi\' c for the support and interest in my work. I
thank Ioan Badulescu for explaining his results and including this
Appendix to the paper. And finally, I would like to thank my wife
Tiki for bringing so much joy into my life.

\subsection{Normalization of Intertwining Operators}

Let $F$ be an algebraic number field (a global field of
characteristic zero) and $D$ a central division algebra of dimension
$d^2$ over $F$. Let $F_v$ denote the completion of $F$ at a place
$v$ and $\mathbb{A}$ the ring of ad\`{e}les of $F$. We use the
global notation of Sections 4 and 5. Let $G_r'$ be the inner form,
defined via $D$, of the split general linear group $G_{rd}=GL_{rd}$.
Let $V$ be a finite set of places where $D$ is non--split. As in the
paper, we assume that $D$ splits at all Archimedean places, i.e. $V$
consists only of non--Archimedean places.

Recall the description of the basic cuspidal automorphic
representations of $G_r'(\mathbb{A})$. Let $\rho$ be a cuspidal
automorphic representation of $G_{q}(\mathbb{A})$ and $s(\rho )$
the smallest positive integer such that the discrete spectrum
representation $\sigma\cong MW(\rho ,s(\rho))$ of
$G_{s(\rho)q}(\mathbb{A})$ is compatible at every place. Then,
$$
\sigma'\cong \g^{-1}(\sigma)\cong\otimes_v|\lj|_v(\sigma_v)
$$
is a basic cuspidal automorphic representation of
$G_r'(\mathbb{A})$. Observe that $\sigma_v'\cong\sigma_v$ at all
places $v\not\in V$. The goal of this Appendix is to show that all
cuspidal automorphic representations of $G_r'(\mathbb{A})$ are
obtained in this way. In fact, we show that all the remaining
representations in the discrete spectrum belong to the residual
spectrum and apply the multiplicity one theorem.

In the sequel we always assume that the cuspidal automorphic
representations are such that the poles of the attached Eisenstein
series and L--functions are real. There is no loss in generality
since this can be achieved simply twisting by the imaginary power
of the absolute value of the determinant. Hence, our assumption is
just a convenient choice of the coordinates. Furthermore, along
with the notation $\times$ for the parabolic induction, we use the
notation ${\Ind}_{M}^{G}$ when we want to point out the Levi
factor $M$ of the standard parabolic subgroup in $G$.

Consider first a cuspidal automorphic representation
$\sigma'\otimes \sigma'$ of the Levi factor $L'(\mathbb{A})\cong
G_r'(\mathbb{A})\times G_r'(\mathbb{A})$ of a maximal proper
standard parabolic subgroup in $G_{2r}'(\mathbb{A})$, where
$\sigma'$ is basic cuspidal as above. Let
$\underline{s}=(s_1,s_2)\in\mathfrak{a}_{L',\mathbb{C}}$ and $w$
the unique nontrivial Weyl group element such that $wL'w^{-1}=L'$.

\begin{lemme}\label{lema:split}
Let $v\not\in V$ be a split place. The normalizing factor for the
standard intertwining operator
$$
A((s_1,s_2),\sigma_v\otimes\sigma_v,w)
$$
acting on the induced representation
$$
{\Ind}_{G_{rd}(F_v)\times
G_{rd}(F_v)}^{G_{2rd}(F_v)}\left(\nu^{s_1}\sigma_v\otimes\nu^{s_2}\sigma_v\right)
$$
is given by
\begin{equation}\label{eq:normfactorsplit}
r((s_1,s_2),\sigma_v\otimes\sigma_v,w)=\frac{\prod_{j=1}^{s(\rho)}L(s_1-s_2-s(\rho)+j,\rho_v\times\widetilde{\rho}_v)}
{\prod_{j=1}^{s(\rho)}L(s_1-s_2+j,\rho_v\times\widetilde{\rho}_v)\cdot
\varepsilon(s_1-s_2,\sigma_v\times\widetilde{\sigma}_v',\psi_v)},
\end{equation}
where the L--functions and $\varepsilon$--factors are the local
Rankin--Selberg ones of pairs. Then, the normalized intertwining
operator $N((s_1,s_2),\sigma_v\otimes\sigma_v,w)$, defined by
$$
A((s_1,s_2),\sigma_v\otimes\sigma_v,w)=r((s_1,s_2),\sigma_v\otimes\sigma_v,w)N((s_1,s_2),\sigma_v\otimes\sigma_v,w),
$$
is holomorphic and non--vanishing for $Re(s_1-s_2)\geq s(\rho)$.
\end{lemme}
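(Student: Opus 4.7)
My plan is to apply the Langlands--Shahidi normalization of standard intertwining operators at the split place $v$, combined with the explicit description of $\sigma_v$ as the local component of the Moeglin--Waldspurger residual representation $MW(\rho,s(\rho))$, and to simplify the resulting ratio of Rankin--Selberg L-functions by a systematic telescoping cancellation. The analytic properties in the prescribed region will then be read off from the location of the poles of $L(\cdot,\rho_v\times\widetilde{\rho}_v)$.

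First I identify the local representation. At a split place $v\notin V$ we have $G'_{r,v}\cong G_{rd}(F_v)$ and $\sigma'_v\cong\sigma_v\cong Lg(\rho_v,s(\rho))$ in the notation of Subsection \ref{generic}; writing $\rho_v=\prod_j\nu^{e_j}\tau_j$ with $\tau_j$ essentially square integrable and $|e_j|<1/2$ gives $\sigma_v\cong\prod_j\nu^{e_j}u(\tau_j,s(\rho))$. For the Siegel-type maximal parabolic with Levi $G_{rd}(F_v)\times G_{rd}(F_v)$ and the unique non-trivial Weyl element $w$, the Langlands--Shahidi normalization (extended inductively to the non-generic Langlands quotient $\sigma_v$ via its generic tempered inducing data, as in Moeglin--Waldspurger) prescribes
$$r((s_1,s_2),\sigma_v\otimes\sigma_v,w)=\frac{L(s_1-s_2,\sigma_v\times\widetilde{\sigma}_v)}{L(s_1-s_2+1,\sigma_v\times\widetilde{\sigma}_v)\,\varepsilon(s_1-s_2,\sigma_v\times\widetilde{\sigma}_v,\psi_v)}.$$

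Second, I expand $L(s,\sigma_v\times\widetilde{\sigma}_v)$ by multiplicativity along the Langlands data together with the Jacquet--Shalika formula for Rankin--Selberg L-functions of Speh representations, which writes it as a product of shifts of $L(\cdot,\tau_j\times\widetilde{\tau}_{j'})$ arranged on an $s(\rho)\times s(\rho)$ lattice. Forming the ratio $L(s)/L(s+1)$ telescopes row by row: all interior factors cancel in pairs, leaving the top row in the numerator at arguments $s-s(\rho)+k$ and the bottom row in the denominator at arguments $s+k$ for $k=1,\ldots,s(\rho)$. Regrouping the surviving factors by the identity
$$L(t,\rho_v\times\widetilde{\rho}_v)=\prod_{j,j'}L(t+e_j-e_{j'},\tau_j\times\widetilde{\tau}_{j'})$$
reconstructs each surviving row as a single $\rho_v$-level L-function, producing precisely \eqref{eq:normfactorsplit}.

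Finally, for holomorphy and non-vanishing on $\mathrm{Re}(s_1-s_2)\geq s(\rho)$, I use that the $\varepsilon$-factor is entire and non-vanishing, that the Euler product $L(\cdot,\rho_v\times\widetilde{\rho}_v)$ has no zeros at all (it is the inverse of a polynomial in $q^{-s}$), and that its rightmost pole lies strictly to the left of $\mathrm{Re}=1$ by the Jacquet--Shalika bounds for the Satake parameters of the unitary generic $\rho_v$. In the asserted region, the numerator L-factor arguments lie in $[1,s(\rho)]$, hence are finite, while the denominator arguments lie in $[s(\rho)+1,2s(\rho)]$, away from any pole. The main obstacle I anticipate is the combinatorial bookkeeping in the second step: the nontrivial shifts $e_j-e_{j'}$ mix between the rows of the Speh lattice, and one must verify that after telescoping the surviving factors regroup into exactly $s(\rho)$ copies of $L(\cdot,\rho_v\times\widetilde{\rho}_v)$ rather than some other combination of shifts.
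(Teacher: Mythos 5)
Your computation of the normalizing factor itself is sound and matches the paper's approach: the paper likewise starts from the Moeglin--Waldspurger normalization $r=L(s_1-s_2,\sigma_v\times\widetilde{\sigma}_v)/\bigl(L(1+s_1-s_2,\sigma_v\times\widetilde{\sigma}_v)\varepsilon\bigr)$, identifies $\sigma_v$ as the Langlands quotient of the standard module built from the generic fully-induced $\rho_v$, expands $L(s,\sigma_v\times\widetilde{\sigma}_v)$ as the ``Speh lattice'' product $L(s,\rho_v\times\widetilde{\rho}_v)^{s(\rho)}\prod_{j=1}^{s(\rho)-1}L(s+s(\rho)-j,\rho_v\times\widetilde{\rho}_v)^j L(s-s(\rho)+j,\rho_v\times\widetilde{\rho}_v)^j$, and telescopes the ratio to get \eqref{eq:normfactorsplit}. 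The bookkeeping you worried about in your second step is exactly what the paper's displayed formula makes precise, and it does work out.

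However, there is a genuine gap in your third step. The Lemma's analytic assertion concerns the \emph{normalized} operator $N=A/r$, not the normalizing factor $r$. Your final paragraph only establishes that $r$ itself has neither poles nor zeros in $\mathrm{Re}(s_1-s_2)\geq s(\rho)$ (using the $\varepsilon$-factor being entire and non-vanishing, the absence of zeros of local $L$-functions, and the Jacquet--Shalika bound). But this tells you nothing about $A$, and hence nothing about $N=A/r$: if $r$ is holomorphic and non-vanishing, then $N$ is holomorphic and non-vanishing \emph{if and only if} $A$ is, and the latter is false in general (the standard operator $A$ has poles). The holomorphy and non-vanishing of the normalized operator for a unitary (not necessarily tempered) inducing representation on $GL_n$ is a nontrivial theorem; it is Lemma I.10 of Moeglin--Waldspurger's paper on the residual spectrum of $GL(n)$, which the paper quotes explicitly. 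Your proof needs to invoke that result (or reprove it), rather than attempting to derive the analytic properties of $N$ from those of $r$ alone.
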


\begin{proof}
This Lemma is a weaker form of Lemma I.10 of \cite{mwGLn} where
the holomorphy and non--vanishing is proved in a certain region
slightly bigger than the closure of the positive Weyl chamber for
any unitary representation. We just show that the normalizing
factor defined in \cite{mwGLn} is the same as here.

By \cite{mwGLn},
\begin{equation}\label{eq:normfaktorMW}
r((s_1,s_2),\sigma_v\otimes\sigma_v,w)=\frac{L(s_1-s_2,\sigma_v\times
\widetilde{\sigma}_v)}{L(1+s_1-s_2,\sigma_v\times
\widetilde{\sigma}_v)\varepsilon(s_1-s_2,\sigma_v\times
\widetilde{\sigma}_v,\psi_v)}.
\end{equation}
But, $\sigma_v$ is a quotient of the induced representation
$$
\nu^{\frac{s(\rho)-1}{2}}\rho_v\times\nu^{\frac{s(\rho)-3}{2}}\rho_v\times
\ldots \times \nu^{-\frac{s(\rho)-1}{2}}\rho_v,
$$
where $\rho_v$, being unitary and generic as the local component
at $v$ of a cuspidal automorphic representation $\rho$, is a fully
induced representation of the form
$$
\nu^{e_{1,v}}\delta_{1,v}\times\nu^{e_{2,v}}\delta_{2,v}\times
\ldots \times\nu^{e_{m_v,v}}\delta_{m_v,v}
$$
with $e_{i,v}$ real, $|e_{i,v}|<1/2$ and
$\delta_{i,v}\in\mathcal{D}^u$. We may arrange the indices in such
a way that $e_{1,v}\geq e_{2,v}\geq\ldots \geq e_{m_v,v}$.

This shows that $\sigma_v$ is the Langlands quotient and we can
apply the formulas for the Rankin--Selberg L--function and
$\varepsilon$--factor of the Langlands quotient. Having in mind
that $\rho_v$ is fully induced, we obtain
\begin{equation}\label{eq:LfjaRS}
L(s,\sigma_v\times\widetilde{\sigma}_v)=L(s,\rho_v\times\widetilde{\rho}_v)^{s(\rho)}
\prod_{j=1}^{s(\rho)-1}L(s+s(\rho)-j,\rho_v\times\widetilde{\rho}_v)^j
L(s-s(\rho)+j,\rho_v\times\widetilde{\rho}_v)^j
\end{equation}
and the $\varepsilon$--factor is of the same form, but since it
has no zeroes nor poles we do not need to refine its form.
Inserting the formula for the L--function into Equation
(\ref{eq:normfaktorMW}) gives after cancellation the normalizing
factor (\ref{eq:normfactorsplit}).
\end{proof}


\begin{lemme}\label{lema:nonsplit}
Let $v\in V$ be a non--split place. Then the standard intertwining
operator
$$
A((s_1,s_2),\sigma_v'\otimes\sigma_v',w)
$$
is holomorphic and non--vanishing for $Re(s_1-s_2)\geq s(\rho)$.
\end{lemme}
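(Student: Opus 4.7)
The approach is to reduce the analysis to rank-one intertwining operators between essentially square-integrable representations of groups $GL_m(D_v)$, and to control these using Tadi\'c's reducibility criteria for $G'_n$ together with the explicit form of $\sigma'_v$ provided by the transfer formulas of Section 3. Holomorphy in the open cone $Re(s_1-s_2)>0$ is immediate from absolute convergence of the intertwining integral (since $\sigma'_v$ is unitary), and the real work is to extend this across the boundary $Re(s_1-s_2)=s(\rho)$ together with non-vanishing on the closed region.

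First I would realize $\sigma'_v$ as a Langlands quotient of a standard module. Decomposing the generic local component as $\rho_v=\prod_i\nu^{e_i}\delta_i$ with $\delta_i$ square-integrable and $|e_i|<1/2$ (section \ref{generic}), proposition \ref{transfer} and the formulas (\ref{product}) and (\ref{beta}) give
$$\sigma'_v=\prod_i\nu^{e_i}|\lj|_v(u(\delta_i,s(\rho))),$$
where each factor $|\lj|_v(u(\delta_i,s(\rho)))$ is either a $\bar u(\tau',k)$-type Langlands quotient or an irreducible product of such. Consequently $\nu^{s_1}\sigma'_v\otimes\nu^{s_2}\sigma'_v$ is itself the Langlands quotient of a standard module $S_{s_1,s_2}$ induced from essentially square-integrable representations on a maximal Levi, and the operator in question factors through the standard intertwining operator of $S_{s_1,s_2}$ by functoriality of the Langlands quotient.

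Next I would decompose the intertwining operator on $S_{s_1,s_2}$ as a composition of rank-one operators $A(s,\delta\otimes\delta',w_0)$ between essentially square-integrable representations of factors $GL_{m}(D_v)\times GL_{m'}(D_v)$. By proposition \ref{irred} applied to $G'_n$ and Tadi\'c's classification ([Ta2]), each such rank-one operator is holomorphic and non-vanishing outside the discrete set of reducibility points where the underlying segments become linked; these points form arithmetic progressions of step $s(\rho'_v)=s(\rho)$ on each cuspidal line, the equality $s(\rho'_v)=s(\rho)$ being built into the definition of the basic cuspidal $\rho'$. Tracking the shifts $e_i$ and the internal Langlands shifts $(k-1)/2-j$ that enter the standard module $S_{s_1,s_2}$, one checks that the arguments of the rank-one operators are of the form $(s_1-s_2)+c$ with $c$ an integer multiple of $s(\rho)$, so that $Re(s_1-s_2)\geq s(\rho)$ keeps every rank-one argument at or past the first reducibility point on its line; non-vanishing in the open cone then follows because the induced representation is irreducible there, by the irreducibility trick (proposition \ref{trick}) and corollary \ref{next} (a).

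The hardest part will be the bookkeeping at the boundary $Re(s_1-s_2)=s(\rho)$: one must verify that no rank-one factor develops a pole there, and that any zeros introduced by individual rank-one factors are cancelled by the projection onto the Langlands quotient, using proposition \ref{reduc} to identify which subquotients survive. An alternative route would transport the Rankin--Selberg normalizing factor of Lemma \ref{lema:split} across $|\lj|_v$, but since the paper does not establish compatibility of Shahidi-type normalizations under local Jacquet--Langlands transfer in the generality needed, the direct local analysis via Tadi\'c's segment combinatorics appears preferable.
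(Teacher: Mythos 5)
Your opening move — realizing $\sigma_v'$ as a Langlands quotient of a standard module $S$ induced from essentially square-integrable representations, using the transfer formulas of Section 3 — is exactly how the paper's proof begins, and the commutative diagram you implicitly invoke (factor the operator on the Langlands quotient through the operator on the standard module) is also present in the paper. But after that the two arguments diverge, and the part you flag as "the hardest part" and leave open is precisely where the paper's argument is decisive and yours stalls.

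You propose to decompose the standard intertwining operator on $S$ into rank-one operators, analyze each via Tadi\'c's segment-linking criteria, and argue that $Re(s_1-s_2)\geq s(\rho)$ keeps each rank-one argument "at or past the first reducibility point on its line." This is not the right criterion: rank-one operators between twists of the same discrete series can vanish or blow up \emph{at} reducibility points, so being "at or past" the first one proves nothing at the boundary $Re(s_1-s_2)=s(\rho)$, which is exactly the case you need. Also, your claim that the rank-one arguments are $(s_1-s_2)+c$ with $c\in s(\rho)\mathbb{Z}$ is inaccurate once the exponents $e_i\in(-1/2,1/2)$ are in play: cross terms $i\neq j$ contribute $(s_1-s_2)+(e_i-e_j)+c$, and $e_i-e_j$ is not a multiple of $s(\rho)$.

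The paper avoids rank-one bookkeeping entirely. It invokes the Langlands classification once, at the level of the full standard module: the standard intertwining operator $A(\underline{s},\delta_v',ww_0)$ is holomorphic with irreducible nonzero image (the Langlands quotient) as soon as every exponent appearing on the $\nu^{s_1}\sigma_v'$ side strictly dominates every exponent on the $\nu^{s_2}\sigma_v'$ side. The key quantitative input is the bound that you do not extract: examining $|\mathbf{LJ}|_v(u(\delta_{i,v},s(\rho)))$ via Propositions \ref{transfer} and the formulas of Section \ref{newform}, the paper shows that the largest absolute value of a $\nu$-exponent in its standard module is at most $\frac{s(\rho)-1}{2}$ — with separate (and in the non-compatible case, strictly smaller) estimates using $0\leq b<s(\delta_{i,v})$. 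Combined with $e_i-e_j>-1$, this yields in the worst case $Re(s_1-s_2)+e_i-e_j-(s(\rho)-1)\geq 1+e_i-e_j>0$, which is the strict positivity that Langlands' criterion requires. So the paper trades your discrete-reducibility-point combinatorics for a single exponent estimate, and that estimate — not the arithmetic-progression structure — is what makes the boundary case go through. Without it, your argument has a genuine gap.
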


\begin{proof}
Sections 3.2, 3.3 and 3.5 give rather precise form of the local
component $\sigma_v'$ of a basic cuspidal automorphic
representation of $GL_r'(\mathbb{A})$. By Section 3.5, it is a
fully induced representation of the form
$$
\sigma_v'\cong
\nu^{e_{1,v}}|\mathbf{LJ}|_v\left(u(\delta_{1,v},s(\rho))\right)\times\ldots\times
\nu^{e_{m_v,v}}|\mathbf{LJ}|_v\left(u(\delta_{m_v,v},s(\rho))\right),
$$
where $e_{i,v}$ are real, $|e_{i,v}|<1/2$ and
$\delta_{i,v}\in\mathcal{D}^u$. More precisely, $e_{i,v}$ and
$\delta_{i,v}$ are defined by
$$
\rho_v\cong\nu^{e_{1,v}}\delta_{1,v}\times\ldots\times
\nu^{e_{m_v,v}}\delta_{m_v,v}.
$$
The precise formula for
$|\mathbf{LJ}|_v\left(u(\delta_{i,v},s(\rho))\right)$ is given in
Proposition 3.7 and Equation (3.8). If $\delta_{i,v}$ is compatible,
then
$$
|\mathbf{LJ}|_v\left(u(\delta_{i,v},s(\rho))\right)=
\overline{u}(\delta_{i,v}',s(\rho)),
$$
and the highest exponent of $\nu$ appearing in the corresponding
standard module is $\frac{s(\rho)-1}{2}$. If $\delta_{i,v}$ is not
compatible, then, by the choice of $s(\rho)$, we have
$$
|\mathbf{LJ}|_v\left(u(\delta_{i,v},s(\rho))\right)= \prod_{i=1}^b
\nu^{i-\frac{b+1}{2}}u'(\delta_{i,+,v}',s(\rho)/s(\delta_{i,v}))\times
\prod_{j=1}^{s(\delta_{i,v})-b}\nu^{j-\frac{s(\delta_{i,v})-b+1}{2}}u'(\delta_{i,-,v}',s(\rho)/s(\delta_{i,v})),
$$
where $\delta_{i,\pm,v}'\in\mathcal{D}'^u$ are certain unitary
discrete series representations. See Section 3.3 for the
unexplained notation. In this case the highest exponent of $\nu$
appearing among the standard modules is either
$$
\frac{b-1}{2}+s(\delta_{i,v})\frac{s(\rho)/s(\delta_{i,v})-1}{2}<\frac{s(\rho)-1}{2}
$$
or
$$
\frac{s(\delta_{i,v})-b-1}{2}+s(\delta_{i,v})\frac{s(\rho)/s(\delta_{i,v})-1}{2}\leq\frac{s(\rho)-1}{2},
$$
where the upper bounds are obtained using the fact that $0\leq
b<s(\delta_{i,v})$ (see Section 3.3).

The description of $\sigma_v'$ shows that the induced
representation
$$
\nu^{s_1}\sigma_v'\times\nu^{s_2}\sigma_v'
$$
is a product of possibly twisted representations of the form
$\overline{u}(\cdot )$ and $u'(\cdot )$ which are the Langlands
quotients of the standard module induced from a discrete series
representation. In other words there is a unitary discrete series
representation $\delta_v'$ of the appropriate Levi factor
$L_0'(F_v)$ of $G_{2r}'(F_v)$ and
$\underline{s}\in\mathfrak{a}_{L_0',\mathbb{C}}$ such that, by the
Langlands classification, the standard intertwining operator
$$
A(\underline{s},\delta_v',w_0):
{\Ind}_{L_0'(k_v)}^{G_{2r}'(k_v)}(\underline{s},\delta_v')\to
{\Ind}_{w_0(L_0')(k_v)}^{G_{2r}'(k_v)}(w_0(\underline{s}),w_0(\delta_v'))
$$
is holomorphic and its image is the induced representation
$\nu^{s_1}\sigma_v'\times\nu^{s_2}\sigma_v'$. Therefore, the
standard intertwining operator
$A((s_1,s_2),\sigma_v'\otimes\sigma_v',w)$ fits into the
commutative diagram
$$
{\Ind}_{L_0'(k_v)}^{G_{2r}'(k_v)}(\underline{s},\delta_v')\quad
{}^{\underrightarrow{\quad A({\underline{s}},\delta_v',w_0)\quad}}
\quad \nu^{s_1}\sigma_v'\times\nu^{s_2}\sigma_v'
$$
$$
{\quad\quad}^{A(\underline{s},\delta_v',ww_0)}\downarrow
\quad\quad\quad\quad\quad\quad\quad\quad\quad\quad\quad\quad\quad\quad\quad
\downarrow {}^{A((s_1,s_2),\sigma_v'\otimes\sigma_v',w)}
$$
$$
{\Ind}_{ww_0(L_0')(k_v)}^{G_{2r}'(k_v)}(ww_0(\underline{s}),ww_0(\delta_v'))\quad
\hookleftarrow \quad\nu^{s_2}\sigma_v'\times\nu^{s_1}\sigma_v',
$$
where the upper horizontal arrow is surjective. The diagram
implies the Lemma if we prove that, for $Re(s_1-s_2)\geq s(\rho)$,
the left vertical arrow is holomorphic and non--vanishing.

By the Langlands classification it suffices to check that the real
parts of all the differences between exponents of $\nu$ appearing in
the parts of $I(\underline{s},\delta_v')$ corresponding to
$\nu^{s_1}\sigma_v'$ and $\nu^{s_2}\sigma_v'$ are strictly positive.
However, we already checked that the highest exponent appearing
among the standard modules in the expressions for
$|\mathbf{LJ}|_v(u(\delta_{i,v},s(\rho)))$ is at most
$\frac{s(\rho)-1}{2}$. Therefore, in the worst case we obtain the
difference
$$
Re(s_1-s_2)+e_{i,v}-e_{j,v}-2\cdot\frac{s(\rho)-1}{2}>0
$$
since $e_{i,v}-e_{j,v}>-1$.
\end{proof}

\begin{rem}
The proof of the previous Lemma follows the idea of the proof of
Lemma I.8 of \cite{mwGLn}. Since the results of this paper based
on the trace formula reduce the question of determining the
residual spectrum to the point $Re(s_1-s_2)=s(\rho)$ and give
bounds on the exponents of the local component at a non--split
place of a cuspidal automorphic representation of an inner form,
we do not require the full power of Lemma I.8, and hence the proof
becomes simpler. However, its analogue for inner forms could have
been obtained using first the transfer of the Plancherel measure
for discrete series representations (see \cite{muicsavin}) to
define the normalization using L--functions for the split group.
For the classical hermitian quaternionic groups we used this
technique to obtain the parts of the residual spectra in
\cite{jaDubrovnik}, \cite{jaSteinberg}, \cite{jaSO8},
\cite{jaSp8}.
\end{rem}

\begin{cor}\label{kor:global}
The normalizing factor for the global standard intertwining
operator
$$
A((s_1,s_2),\sigma'\otimes\sigma',w)
$$
acting on the induced representation
$$
{\Ind}_{L'(\mathbb{A})}^{G'_{2r}(\mathbb{A})}\left(\nu^{s_1}\sigma'\otimes\nu^{s_2}\sigma'\right)
$$
is given by
\begin{equation}\label{eq:normfactorglobal}
r((s_1,s_2),\sigma'\otimes\sigma',w)=\frac{\prod_{j=1}^{s(\rho)}L_V(s_1-s_2-s(\rho)+j,\rho\times\widetilde{\rho})}
{\prod_{j=1}^{s(\rho)}L_V(s_1-s_2+j,\rho\times\widetilde{\rho})\cdot
\varepsilon_V(s_1-s_2,\sigma'\times\widetilde{\sigma}')},
\end{equation}
where the L--functions and $\varepsilon$--factors are the partial
Rankin--Selberg ones with respect to the finite set $V$ of
non--split places of $D$. Then, the normalized intertwining
operator $N((s_1,s_2),\sigma'\otimes\sigma',w)$ defined by
$$
A((s_1,s_2),\sigma'\otimes\sigma',w)=r((s_1,s_2),\sigma'\otimes\sigma',w)N((s_1,s_2),\sigma'\otimes\sigma',w)
$$
is holomorphic and non--vanishing for $Re(s_1-s_2)\geq s(\rho)$.
Moreover, the only pole of the standard intertwining operator
$A((s_1,s_2),\sigma'\otimes\sigma',w)$ in the region
$Re(s_1-s_2)\geq s(\rho)$ is at $s_1-s_2=s(\rho)$ and it is
simple.
\end{cor}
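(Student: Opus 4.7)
The plan is to derive the corollary by putting together the two local Lemmas place by place and then running a direct pole analysis on the resulting product formula. First I would write the global standard intertwining operator as the restricted tensor product $A((s_1,s_2),\sigma'\otimes\sigma',w) = \bigotimes_v A((s_1,s_2),\sigma'_v\otimes\sigma'_v,w)$ over all places $v$ of $F$. At a split place $v\notin V$ we have $\sigma'_v\cong \sigma_v$, and Lemma A.1 supplies the local normalizing factor (\ref{eq:normfactorsplit}) together with the statement that the corresponding local normalized operator $N_v$ is holomorphic and non-vanishing for $\mathrm{Re}(s_1-s_2)\geq s(\rho)$. At a non-split place $v\in V$, Lemma A.2 says that $A((s_1,s_2),\sigma'_v\otimes\sigma'_v,w)$ is already holomorphic and non-vanishing in the same region, so we simply take $N_v=A_v$ and $r_v=1$ there. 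Multiplying the local normalizing factors only over $v\notin V$ then yields exactly the partial Rankin--Selberg L-function and partial $\varepsilon$-factor expression of (\ref{eq:normfactorglobal}); this is a book-keeping exercise since the local factors at places in $V$ are precisely those we have chosen not to include in the normalization.

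Next, I would check that $N=\bigotimes_v N_v$ makes sense globally and is holomorphic and non-vanishing in the region. For almost all finite places $v$, both $\rho_v$ and $\sigma'_v$ are unramified and $N_v$ acts by the identity on the spherical vector (this is the classical Gindikin--Karpelevich calculation, identical at split and non-split unramified places), so the restricted product converges. Holomorphy and non-vanishing of the global $N$ on $\mathrm{Re}(s_1-s_2)\geq s(\rho)$ then follow place by place from the two Lemmas.

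For the pole statement, since $A=r\cdot N$ and $N$ is holomorphic and non-vanishing in the region, the poles of $A$ in $\mathrm{Re}(s_1-s_2)\geq s(\rho)$ coincide with those of $r$. Setting $t=s_1-s_2$, recall that $L(s,\rho\times\widetilde{\rho})$ is the complete Rankin--Selberg L-function of a cuspidal unitary $\rho$ with its contragredient; by Jacquet--Shalika it has a simple pole at $s=1$, is otherwise holomorphic in $\mathrm{Re}(s)\geq 1$, and is non-vanishing in $\mathrm{Re}(s)>1$. The local factors at $v\in V$ are holomorphic and non-vanishing on $\mathrm{Re}(s)\geq 1$ because $\rho_v$ is generic unitary, so the partial L-function $L_V(s,\rho\times\widetilde{\rho})$ inherits a simple pole at $s=1$ and the same non-vanishing in $\mathrm{Re}(s)>1$. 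In the numerator of (\ref{eq:normfactorglobal}), the argument of the $j$-th factor is $t-s(\rho)+j$, and at $\mathrm{Re}(t)=s(\rho)$ these real parts are $1,2,\ldots,s(\rho)$; only $j=1$ hits $s=1$ and contributes a simple pole, and as $\mathrm{Re}(t)$ grows past $s(\rho)$ all arguments exceed $1$, killing all further poles. In the denominator the arguments $t+j$ satisfy $\mathrm{Re}(t+j)\geq s(\rho)+1>1$, so there are no zeros, and $\varepsilon_V(s_1-s_2,\sigma'\times\widetilde{\sigma}')$ is entire and non-vanishing. Hence the only pole of $r$, and so of $A$, in $\mathrm{Re}(s_1-s_2)\geq s(\rho)$ occurs at $s_1-s_2=s(\rho)$ and is simple.

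The main obstacle is really the bookkeeping of matching the local shape (\ref{eq:normfactorsplit}) to the global partial L-function (\ref{eq:normfactorglobal}) and verifying that the local factors at $V$ do not interfere with the pole/zero structure at the boundary $\mathrm{Re}(s)=1$ of the Rankin--Selberg L-function; once this is in hand, everything else is a routine application of Lemmas A.1 and A.2 combined with classical analytic properties of Rankin--Selberg L-functions of a cuspidal representation paired with its contragredient.
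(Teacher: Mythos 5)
Your proposal follows essentially the same route as the paper's proof: define the global normalizing factor as the product of the local ones from Lemma~A.1 over the split places (taking it trivial at $v\in V$, where Lemma~A.2 already gives holomorphy and non-vanishing), obtain formula (\ref{eq:normfactorglobal}), and then deduce the pole statement from the known analytic behavior of the partial Rankin--Selberg $L$-function of $\rho$ against its contragredient. The only cosmetic difference is that the paper justifies holomorphy of $L(z,\rho_v\times\widetilde{\rho}_v)$ for $\mathrm{Re}(z)\geq 1$ at $v\in V$ by explicitly factoring $\rho_v$ as a fully induced representation from unitary discrete series, whereas you invoke genericity and unitarity directly; the substance is the same.
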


\begin{proof}
The global normalizing factor is obtained as a product over all
places of the local ones. Note that, for our purposes, at a
non--split places the normalizing factor is taken to be trivial.
Then the holomorphy and non--vanishing of the normalized
intertwining operator in the region $Re(s_1-s_2)\geq s(\rho)$
follows from the local results of the previous two Lemmas.

The analytic properties of the Rankin--Selberg L--functions are
well--known. The global Rankin--Selberg L--function
$L(z,\rho\times\widetilde{\rho})$ has the only poles at $z=0$ and
$z=1$ and they are both simple. It has no zeroes for $Re(z)\geq
1$. Writing $\rho_v$ at a non--split place $v\in V$ as a fully
induced representation from the discrete series representation as
in the proof of the previous Lemma shows that the local
Rankin--Selberg L--function equals
$$
L(z,\rho_v\times\widetilde{\rho_v})=\prod_{i,j=1}^{m_v}L(z+e_{i,v}-e_{j,v},\delta_{i,v}\times\widetilde{\delta}_{j,v}).
$$
Since the local L--functions attached to unitary discrete series
representations are holomorphic in the strict right half--plane,
and $e_{i,v}-e_{j,v}>-1$, the L--function
$L(z,\rho_v\times\widetilde{\rho_v})$ is holomorphic for
$Re(z)\geq 1$. Local L--functions have no zeroes.

Therefore, the partial L--function
$L_V(z,\rho\times\widetilde{\rho})$ is holomorphic for $Re(z)\geq
1$ except for a simple pole at $z=1$. It has no zeroes for
$Re(z)\geq 1$. The $\varepsilon$--factor has neither zeroes nor
poles. Since for $Re(s_1-s_2)\geq s(\rho)$ real parts of all the
arguments of the L--functions in the global normalizing factor
(\ref{eq:normfactorglobal}), except $Re(s_1-s_2-s(\rho)+1)\geq 1$,
are strictly greater than one, it has no zeroes and the only pole
occurs for $s_1-s_2=s(\rho)$. Since the normalized intertwining
operator is holomorphic and non--vanishing for $Re(s_1-s_2)\geq
s(\rho)$, it turns out that the only pole in the region
$Re(s_1-s_2)\geq s(\rho)$ of the global standard intertwining
operator is at $s_1-s_2=s(\rho)$ and it is simple.
\end{proof}

\subsection{Poles of Eisenstein Series}

Let $\sigma'$ be as above and $k>1$ an integer. Let $\pi'\cong
\sigma'\otimes\ldots\otimes\sigma'$ be a cuspidal automorphic
representation of the Levi factor $M'(\mathbb{A})\cong
G_r'(\mathbb{A})\times\ldots\times G_r'(\mathbb{A})$ of a standard
parabolic subgroup of $G_{kr}'(\mathbb{A})$, with $k$ copies of
$G_r'(\mathbb{A})$ and $\sigma'$ in the products. We fix an
isomorphism $\mathfrak{a}_{M',\mathbb{C}}^\ast\cong \mathbb{C}^k$
using the absolute value of the reduced norm of the determinant at
each copy of $G_r'$ and denote its elements by
$\underline{s}=(s_1,s_2,\ldots
,s_k)\in\mathfrak{a}_{M',\mathbb{C}}^\ast$. By the results of the
paper, the study of the residual spectrum is reduced to the point
$$
\underline{s}_0=\left(\frac{s(\rho)(k-1)}{2},\frac{s(\rho)(k-3)}{2},\ldots
,-\frac{s(\rho)(k-1)}{2}\right),
$$
i.e. we have to prove that the unique discrete series constituent
of the induced representation
$$
{\Ind}_{M'(\mathbb{A})}^{G_{kr}'(\mathbb{A})}\left(\underline{s}_0,\pi'\right)=
\nu^{\frac{s(\rho)(k-1)}{2}}\sigma'\times\nu^{\frac{s(\rho)(k-3)}{2}}\sigma'\times
\ldots\times\nu^{-\frac{s(\rho)(k-1)}{2}}\sigma',
$$
which is denoted in the paper by $MW'(\sigma',k)$, is in the
residual spectrum. Of course, the case $k=1$ is excluded since it
gives just the (basic) cuspidal representation $\sigma'$.

\begin{lemme}\label{lema:eisenstein}
Let
$$
E(\underline{s},g;\pi',f_{\underline{s}})
$$
be the Eisenstein series attached to a 'good' (in a sense of
Sections II.1.1 and II.1.2 of \cite{mwknjiga}) section
$f_{\underline{s}}$ of the above induced representation from a
cuspidal automorphic representation $\pi'$. Then, its only pole in
the region $Re(s_i-s_{i+1})\geq s(\rho)$, for $i=1,\ldots ,k-1$,
is at $\underline{s}_0$ and it is simple. The constant term map
gives rise
to an isomorphism between the space of automorphic forms
$\mathcal{A}(\sigma',k)$ spanned by the iterated residue at
$\underline{s}_0$ of the Eisenstein series and the irreducible
image $MW'(\sigma',k)$ of the normalized intertwining operator
$$
N(\underline{s}_0,\pi',w_l),
$$
where $w_l$ is the longest among Weyl group elements $w$ such that
$wM'w^{-1}\cong M'$.
\end{lemme}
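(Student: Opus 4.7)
The plan is to deduce the pole structure of $E(\underline{s}, g; \pi', f_{\underline{s}})$ from that of its constant term along the parabolic $P'$ with Levi $M'$. By the Langlands constant term formula,
\[
E_{P'}(\underline{s}, g) = \sum_{w \in W(M')} M(\underline{s}, \pi', w) f_{\underline{s}}(g) \cdot e^{\langle w\underline{s} + \rho_{P'}, H_{M'}(g)\rangle},
\]
with $W(M') \cong S_k$ permuting the $k$ factors of $M'$. The analytic behaviour of $E$ in the region under consideration is thus governed by that of the global intertwining operators $M(\underline{s}, \pi', w)$. Each such operator admits a rank-one decomposition with one factor for every inversion $(i, j)$ of $w$ (i.e.\ each pair with $i < j$ and $w(i) > w(j)$), and the corresponding rank-one factor depends on $\underline{s}$ only through the difference $s_i - s_j$.

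By Corollary \ref{kor:global}, every rank-one factor is the product of a normalizing factor, which has a unique simple pole at $s_i - s_j = s(\rho)$ and is otherwise holomorphic on $Re(s_i - s_j) \geq s(\rho)$, with a normalized operator that is holomorphic and non-vanishing there. In the region $Re(s_i - s_{i+1}) \geq s(\rho)$ for all $i$ one has $Re(s_i - s_j) \geq (j-i)\, s(\rho)$, so the only possible singularities of $E$ lie on the hyperplanes $s_i - s_j = s(\rho)$. At $\underline{s}_0$, where $s_i - s_j = (j-i)\, s(\rho)$, these hold precisely for the $k-1$ adjacent pairs $(i, i+1)$. Among elements of $W(M')$, only $w_l$ has all $k-1$ adjacent pairs among its inversions: if $w(i) > w(i+1)$ for every $i$, then $w(1) > w(2) > \cdots > w(k)$ forces $w = w_l$. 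Therefore, for $w \neq w_l$ the pole of $M(\underline{s}, \pi', w)$ at $\underline{s}_0$ has order strictly less than $k-1$, while $w_l$ contributes a pole of order exactly $k-1$ along the adjacent-hyperplane arrangement.

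The $(k-1)$-fold iterated residue of $E$ along $s_i - s_{i+1} = s(\rho)$, $i = 1, \ldots, k-1$, is therefore computed from the $w_l$ term alone, and up to a non-zero scalar equals the image of $f_{\underline{s}_0}$ under the normalized operator $N(\underline{s}_0, \pi', w_l)$. At $\underline{s}_0$ the induced representation $\Ind_{M'}^{G'_{kr}}(\underline{s}_0, \pi')$ is in strict standard Langlands position, with exponents $\frac{(k-1)s(\rho)}{2} > \frac{(k-3)s(\rho)}{2} > \cdots > -\frac{(k-1)s(\rho)}{2}$, so $N(\underline{s}_0, \pi', w_l)$ is the long intertwining operator realizing the Langlands quotient: its image is the unique irreducible quotient, which by definition is $MW'(\sigma', k)$. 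Non-vanishing of this image is provided by the second conclusions of Lemmas \ref{lema:split} and \ref{lema:nonsplit}, and the constant term map identifies $\mathcal{A}(\sigma', k)$ with $MW'(\sigma', k)$ by the standard faithfulness of the constant term along $P'$ on square-integrable automorphic forms (cf.\ \cite{mwknjiga}).

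The main obstacle is controlling the contributions of non-longest Weyl elements and ruling out accidental singularities: this is precisely what the holomorphy and non-vanishing statement in Corollary \ref{kor:global} buys us, guaranteeing that the entire pole structure of $E$ in the region under consideration is encoded by the combinatorics of inversions of $w \in W(M')$ together with the hyperplane arrangement $\{s_i - s_j = s(\rho)\}$.
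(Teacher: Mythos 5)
Your proof follows essentially the same route as the paper's: pass to the constant term along $P'$, use Corollary \ref{kor:global} to locate the pole, observe that only $w_l$ contributes to the iterated residue, and identify the resulting normalized operator's image with $MW'(\sigma',k)$. You add useful detail that the paper compresses, namely the rank-one decomposition of $M(\underline{s},\pi',w)$ by inversions $(i,j)$ of $w$, the observation that non-adjacent inversions give $s_i-s_j=(j-i)s(\rho)>s(\rho)$ at $\underline{s}_0$ and hence no pole, and the fact that $w_l$ is the unique element of $W(M')$ with all adjacent pairs among its inversions. That bookkeeping is exactly what the paper's sentence ``it indeed occurs only for \ldots the longest element $w_l$'' is asserting without spelling out.

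Two small points where you deviate or under-argue. First, you identify the image of $N(\underline{s}_0,\pi',w_l)$ with $MW'(\sigma',k)$ by invoking a ``strict standard Langlands position'' argument. But $\sigma'_v$ at a non-split place is not tempered; it is a fully induced representation with nonzero exponents bounded by $\frac{s(\rho)-1}{2}$. So one must verify, as Lemma \ref{lema:nonsplit} does, that the exponent gap $Re(s_i-s_{j})+e_{i,v}-e_{j,v}-2\cdot\frac{s(\rho)-1}{2}$ stays positive; the Langlands-quotient conclusion is then legitimate but not literal. The paper instead deduces irreducibility from the global uniqueness of the discrete series constituent (Proposition \ref{classif}(a)) — a cleaner route given what has already been established. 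Second, you appeal to faithfulness of the constant term on square-integrable automorphic forms without first checking that the residue is square-integrable. That requires the Langlands criterion at $\underline{s}_0$ (Section I.4.11 of \cite{mwknjiga}), which the paper cites explicitly; your argument should do the same, since faithfulness is only available once square-integrability is known.
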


\begin{proof}
By the general theory of the Eisenstein series explained in
Section V.3.16 of \cite{mwknjiga}, its poles coincide with the
poles of its constant term along the standard parabolic subgroup
with the Levi factor $M'$ which equals the sum of the standard
intertwining operators
$$
E_0(\underline{s},g;\pi',f_{\underline{s}})=\sum_{w\in W(M')}
A(\underline{s},\pi',w)f_{\underline{s}}(g),
$$
where $W(M')$ is the set of the Weyl group elements such that
$wM'w^{-1}\cong M'$. Hence, the poles of the Eisenstein series are
determined by the poles of the standard intertwining operators.

By Corollary \ref{kor:global}, in the region $Re(s_i-s_{i+1})\geq
s(\rho)$, for $i=1,\ldots ,k-1$, the only possibility for the pole
is at $\underline{s}_0$. However, it indeed occurs only for the
intertwining operators corresponding to the Weyl group element
inverting the order of any two successive indices, i.e. the
longest element $w_l$ in $W(M')$. Since the iterated pole is
simple in every iteration, the iterated residue of the constant
term, up to a non--zero constant, equals the normalized
intertwining operator
$$
N(\underline{s}_0,\pi',w_l),
$$
as claimed.

The irreducibility of its image follows from the uniqueness of the
discrete series constituent in the considered induced
representation obtained in Proposition 5.6(a). The square
integrability follows from the Langlands criterion (Section I.4.11
of \cite{mwknjiga}).
\end{proof}

\begin{rem}
The proof of the Lemma shows that $MW'(\sigma',k)$, for $k>1$, is
at every place an irreducible quotient of the corresponding
induced representation.
\end{rem}

\begin{theo}\label{tm:residualspec}
The residual spectrum $L_{\mathrm{res}}^2(G_n')$ of an inner form
$G_n'(\mathbb{A})$ of the split general linear group decomposes
into a Hilbert space direct sum
$$
L_{\mathrm{res}}^2(G_n')\cong\bigoplus_{\left.%
\begin{array}{c}
    r|n \\
    1 \leq r<n \\
\end{array}%
\right.}\bigoplus_{\left.%
\begin{array}{c}
    \sigma'\in DS_r' \\
    \mathrm{(basic)}\hbox{ }
\mathrm{cuspidal} \\
\end{array}%
\right.} \mathcal{A}(\sigma',n/r),
$$
where $\mathcal{A}(\sigma',n/r)\cong MW'(\sigma',n/r)$ are the
spaces of automorphic forms obtained in the previous Lemma.
\end{theo}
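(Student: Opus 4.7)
The plan is to combine the classification of discrete series from Proposition 5.6(a) of the main paper, the multiplicity one theorem (Theorem 5.1(b)), and the pole analysis of the Eisenstein series carried out in Lemma \ref{lema:eisenstein}. By Proposition 5.6(a), every element of the discrete spectrum of $G_n'(\mathbb{A})$ is of the form $MW'(\sigma',k)$ for a unique pair $(\sigma',k)$, where $\sigma'$ is a basic cuspidal representation of $G_r'(\mathbb{A})$ with $rk=n$, and where $k=1$ corresponds to $\sigma'$ itself (which is cuspidal, hence not residual). Consequently, the residual spectrum is a priori contained in the Hilbert sum over pairs $(\sigma',k)$ with $k\geq 2$, and it suffices to exhibit each such $MW'(\sigma',k)$ as a residual representation and then invoke multiplicity one to upgrade this into a direct sum decomposition.

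For the existence of each summand, I would apply Lemma \ref{lema:eisenstein} directly: given a basic cuspidal $\sigma'$ of $G_r'(\mathbb{A})$ and $k\geq 2$, form the cuspidal representation $\pi'\cong\sigma'^{\otimes k}$ of the Levi $M'(\mathbb{A})\cong G_r'(\mathbb{A})^k$ in $G_{rk}'(\mathbb{A})$, and consider the Eisenstein series $E(\underline{s},g;\pi',f_{\underline{s}})$ on good sections. By that lemma, in the region $\operatorname{Re}(s_i-s_{i+1})\geq s(\rho)$ (which contains the relevant point $\underline{s}_0$ thanks to the classification having reduced matters to precisely this cone) the only pole is simple and located at $\underline{s}_0$, and the iterated residue produces, via the constant term, an irreducible space $\mathcal{A}(\sigma',k)$ isomorphic to $MW'(\sigma',k)$. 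The square-integrability of these iterated residues follows from the Langlands criterion applied to the exponents appearing in the image of $N(\underline{s}_0,\pi',w_l)$, as explained in Mœglin--Waldspurger, so $\mathcal{A}(\sigma',k)$ sits inside $L^2_{\mathrm{res}}(G_n')$.

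To assemble the direct sum, I would use multiplicity one (Theorem 5.1(b)) to conclude that the various $MW'(\sigma',k)$, being pairwise non-isomorphic discrete series (uniqueness of the parameters in Proposition 5.6(a)), each occur exactly once. Together with the fact, verified on the Eisenstein side by the decomposition of $L^2_{\mathrm{res}}$ via iterated residues of Eisenstein series from cuspidal data on standard Levi subgroups, this gives the Hilbert sum decomposition as stated after reindexing by $r=n/k$. As an immediate corollary, any cuspidal representation of $G_n'(\mathbb{A})$ must correspond to the case $k=1$, so it is basic cuspidal, which settles the remark following Proposition 5.5 and makes the classification of the cuspidal spectrum clean.

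The main obstacle is the control of the poles of the standard intertwining operators needed to force the only residual contribution in the relevant cone to sit at $\underline{s}_0$; this is precisely the content of Corollary \ref{kor:global}, which in turn rests on the careful local normalization at split places (Lemma \ref{lema:split}, via the Rankin--Selberg formalism and the Langlands quotient description of $\sigma_v$) and on the Langlands-quotient analysis at non-split places (Lemma \ref{lema:nonsplit}, where the bound on the exponents uses the explicit formulas for $|\mathbf{LJ}|_v(u(\delta_{i,v},s(\rho)))$ from Proposition 3.7 and Equation (3.8)). Once these analytic bounds are in place, the rest is a straightforward translation between the Eisenstein-residue description of $L^2_{\mathrm{res}}$ and the combinatorial classification of discrete series already established via the trace formula comparison.
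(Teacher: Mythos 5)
Your proposal is correct and follows essentially the same route as the paper's proof: reduce via the trace-formula classification of the discrete spectrum (Proposition \ref{classif}(a)) and multiplicity one to showing that each $MW'(\sigma',k)$ with $k>1$ lies in the residual spectrum, and then obtain that fact from the pole analysis of the Eisenstein series in Lemma \ref{lema:eisenstein}. Your write-up additionally makes the role of Corollary \ref{kor:global} and the local Lemmas \ref{lema:split}, \ref{lema:nonsplit} explicit, but those are precisely the ingredients already packaged inside Lemma \ref{lema:eisenstein}, so there is no genuine difference of approach.
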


\begin{proof}
The results of Section 5 classify the discrete spectrum $DS_n'$ of
the inner form $G_n'(\mathbb{A})$ using the trace formula. The
basic cuspidal representations are proved to be cuspidal
automorphic. Hence, it remains to show that the representations of
the form $MW'(\sigma',k)$, for $k>1$ and a basic cuspidal
representation $\sigma'$, are in the residual spectrum. However,
this is precisely the content of the previous Lemma
\ref{lema:eisenstein}.
\end{proof}

\begin{cor}\label{cor:cusp}
The cuspidal spectrum of an inner form $G_n'(\mathbb{A})$ consists
of the basic cuspidal automorphic representations.
\end{cor}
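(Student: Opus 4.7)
The plan is to combine the classification of the discrete spectrum from Section 5 of the main text with the decomposition of the residual spectrum established in Theorem \ref{tm:residualspec}, and then eliminate one case via multiplicity one.

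First I would recall from Proposition \ref{classif}(a) that every discrete series of $G_n'(\mathbb{A})$ has the form $MW'(\sigma',k)$ for some divisor $r$ of $n$, some basic cuspidal representation $\sigma'$ of $G_r'(\mathbb{A})$, and some integer $k\geq 1$ with $rk = n$; moreover the pair $(r,\sigma',k)$ is uniquely determined. The case $k=1$ returns exactly the basic cuspidal representations of $G_n'(\mathbb{A})$, which are cuspidal by construction (Proposition \ref{cuspidal}(b) and the definition of basic cuspidal).

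Next, for $k>1$, Theorem \ref{tm:residualspec} exhibits $MW'(\sigma',k)$ as a summand $\mathcal{A}(\sigma',k)$ of the residual spectrum $L^2_{\mathrm{res}}(G_n')$. Since the discrete spectrum decomposes as an orthogonal Hilbert direct sum of its cuspidal and residual parts, a discrete series lies in the cuspidal spectrum if and only if it does not occur in the residual spectrum. The multiplicity one statement of Theorem \ref{correspondence}(b) guarantees that $MW'(\sigma',k)$ appears in the discrete spectrum with multiplicity exactly one, so its single occurrence --- already accounted for in the residual part --- forbids any cuspidal occurrence. Hence for $k>1$ the representation $MW'(\sigma',k)$ is not cuspidal.

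Putting these together, the cuspidal spectrum is obtained from the list of all $MW'(\sigma',k)$ by retaining only those with $k=1$, which is precisely the list of basic cuspidal representations. There is no real obstacle here beyond bookkeeping: the work was done in establishing Theorem \ref{tm:residualspec} (which required the normalization of intertwining operators and the location of the pole of the Eisenstein series at $\underline{s}_0$) together with the trace-formula classification of $DS_n'$ from Section 5. The only subtle point to state carefully is the use of multiplicity one to convert "appears in the residual spectrum" into "does not appear in the cuspidal spectrum," since without it a representation could a priori contribute to both summands of the discrete spectrum.
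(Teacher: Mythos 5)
Your proof is correct and follows essentially the same route as the paper: Theorem~\ref{tm:residualspec} shows that all non--basic-cuspidal discrete series lie in the residual spectrum, and multiplicity one (Theorem~\ref{correspondence}(b)) then rules out any cuspidal occurrence. You have merely spelled out more explicitly the classification step from Proposition~\ref{classif}(a) and the orthogonal decomposition of the discrete spectrum, which the paper's terse proof leaves implicit.
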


\begin{proof}
Theorem \ref{tm:residualspec} shows that in the discrete spectrum $DS_n'$
of an inner form $G_n'(\mathbb{A})$ obtained in Section 5 all the
representations not being basic cuspidal belong to the residual
spectrum. Hence, the multiplicity one of Theorem \ref{correspondence} for inner forms implies the
Corollary.
\end{proof}


\begin{thebibliography}{99}

\bibitem[AC]{arthurclozel}
J. Arthur, L. Clozel, {\it Simple Algebras, Base Change, and the
Advanced Theory of the Trace Formula}, Ann. of Math. Studies {\bf 120},
Princeton Univ. Press, 1989

\bibitem[Gr1]{jaDubrovnik}
N. Grbac, Correspondence between the Residual Spectra of Rank Two Split Classical Groups and their Inner Forms,
{\it Functional
analysis IX} (Dubrovnik, 2005), 44--57, Various Publ. Ser. {\bf 48}, Univ.
Aarhus, Aarhus, 2007

\bibitem[Gr2]{jaSteinberg}
N. Grbac, On a Relation between Residual Spectra of Split Classical Groups and their Inner Forms,
{\it Canad. J. Math.}, to appear

\bibitem[Gr3]{jaSO8}
N. Grbac, On the Residual Spectrum of Hermitian Quaternionic Inner Form of $SO_8$, preprint

\bibitem[Gr4]{jaSp8}
N. Grbac, The Residual Spectrum of an Inner Form of $Sp_8$ Supported in the Minimal Parabolic Subgroup,
preprint

\bibitem[La2]{langlands}
R.P. Langlands, {\it On the Functional Equations Satisfied
by Eisenstein series}, Lecture Notes in Math. {\bf 544},
Springer--Verlag, 1976

\bibitem[MW2]{mwGLn}
C. M\oe glin, J.--L. Waldspurger, Le spectre r\' esiduel de
$GL(n)$, {\it Ann. scient. \' Ec. Norm. Sup.} {\bf 22} (1989),
605--674

\bibitem[MW3]{mwknjiga}
C. M\oe glin, J.--L. Waldspurger, {\it Spectral
Decomposition and Eisenstein Series}, Cambridge Tracts in Math.
{\bf 113}, Cambridge University Press, 1995

\bibitem[MS]{muicsavin}
G. Mui\' c, G. Savin, Complementary Series for Hermitian
Quaternionic Groups, {\it Canad. Math. Bull.} {\bf 43} (2000),
90--99


\end{thebibliography}
\end{document}